\numberwithin{equation}{section}
\theoremstyle{thmstyleone}%
\newtheorem{theorem}{Theorem}[section]
\newtheorem{proposition}[theorem]{Proposition}%
\newtheorem{definition}[theorem]{Definition}
\newtheorem{lemma}[theorem]{Lemma}
\newtheorem{corollary}[theorem]{Corollary}
\newtheorem{example}[theorem]{Example}%
\newtheorem{remark}[theorem]{Remark}%
\newcommand{\RR}{\mathbb{R}}
\newcommand{\ZZ}{\mathbb{Z}}
\newcommand{\NN}{\mathbb{N}}
\newcommand{\CC}{\mathbb{C}}
\newcommand{\ii}{\mathbf{i}}
\newcommand{\abs}[1]{\left\lvert #1 \right\rvert}
\newcommand{\pt}{\partial}
\newcommand{\ind}{\mathrm{Ind}}
\newcommand{\SOne}{\mathbb{S}^1}
\newcommand{\STwo}{\mathbb{S}^2}
\newcommand{\SOthr}{\mathrm{SO}(3)}
\newcommand{\PSUtwo}{\mathrm{PSU}(2)}
\newcommand{\Uone}{\mathrm{U}(1)}
\newcommand{\Otwo}{\mathrm{O}(2)}
\newcommand{\Msph}[3]{\mathcal{M}sph_{#1,#2}(#3)}
\newcommand{\MsphC}[3]{\mathcal{M}sph^C_{#1,#2}(#3)}
\newcommand{\MsphD}[3]{\mathcal{M}sph^D_{#1,#2}(#3)}
\newcommand{\Msphgn}{\mathcal{M}sph_{g,n}(\vec{\theta})}
\newcommand{\MsphDF}[3]{\mathcal{M}sphF_{#1,#2}^D(#3)}
\newcommand{\MsphDT}[4]{\mathcal{M}sph_{#1,#2}^D(#3;#4)}
\newcommand{\MsphgnD}{\mathcal{M}sph^D_{g,n}(\vec{\theta})}
\newcommand{\MsphgnDF}{\mathcal{M}sphF_{g,n}^D(\vec{\theta})}
\newcommand{\MsphgnDT}{\mathcal{M}sph_{g,n}^D(\vec{\theta};\vec{T})}
\newcommand{\MsphgnDFT}{\mathcal{M}sphF_{g,n}^D(\vec{\theta};\vec{T})}
\newcommand{\MsphCF}[3]{\mathcal{M}sphF_{#1,#2}^C(#3)}
\newcommand{\MsphCT}[4]{\mathcal{M}sph_{#1,#2}^C(#3;#4)}
\newcommand{\MsphCFT}[4]{\mathcal{M}sphF_{#1,#2}^C(#3;#4)}
\newcommand{\MsphgnC}{\mathcal{M}sph^C_{g,n}(\vec{\theta})}
\newcommand{\MsphgnCF}{\mathcal{M}sphF_{g,n}^C(\vec{\theta})}
\newcommand{\MsphgnCT}{\mathcal{M}sph_{g,n}^C(\vec{\theta};\vec{T})}
\newcommand{\MsphgnCFT}{\mathcal{M}sphF_{g,n}^C(\vec{\theta};\vec{T})}
\newcommand{\teich}{Teichm\"{u}ller}
\newcommand{\PH}{Poincar\'{e}-Hopf}
\newcommand{\mfoli}{measured foliation}
\newcommand{\qdf}{quadratic differential}
\newcommand{\nbhd}{neighborhood}
\newcommand{\homeo}{homeomorphism}
\newcommand{\csphsf}{cone spherical surface}
\newcommand{\csphmtr}{cone spherical metric}
\newcommand{\SYM}{\textcolor{black}}
\newcommand{\DEF}{\emph}
\newcommand{\EMP}{\textcolor{black}}
\begin{document}

\title[Article Title]{Moduli Space of Dihedral Spherical Surfaces and Measured Foliations}


\author{Sicheng Lu and Bin Xu}







\abstract{
{\it Cone spherical surfaces} are orientable Riemannian surfaces with constant curvature one and a finite set of conical singularities. A subset of these surfaces, referred to as {\it dihedral surfaces}, is characterized by their monodromy groups, which notably preserve a pair of antipodal points on the unit two-sphere within three-dimensional Euclidean space. On each dihedral surface, we define a pair of transverse measured foliations that, in turn, comprehensively characterize the original dihedral surface. Furthermore, we introduce a variety of geometric decompositions and deformations specific to dihedral surfaces. As a practical application, we ascertain the dimension of the moduli space for dihedral surfaces given specified cone angles and topological types. This dimension acts as an indicator of the independent geometric parameters that determine the isometric classes of these surfaces.
}

\keywords{cone spherical metric, measured foliation, moduli space, quadratic differential, monodromy group. }

\pacs[2020 Mathematics Subject Classification]{58D27, 53C12; 30F45, 30F30}

\maketitle


\pagestyle{plain}

\textcolor{black}{\tableofcontents}

\section{Introduction}\label{sec:intro}

The investigation of conformal constant curvature metrics with conical singularities on Riemann surfaces represents a natural extension of uniformization theory. Given a compact Riemann surface \(X\) of genus \(g\) with \(n\) distinct points \(P := \{p_1, \ldots, p_n\}\) and an \emph{angle vector} \(\vec{\theta} := (\theta_1, \ldots, \theta_n) \in \mathbb{R}_+^n\) where \(\theta_i \neq 1\), a central question arises: the existence of a singular conformal metric \(g\) of constant curvature \(K \in \{-1, 0, +1\}\) with a conical singularity of angle \(2\pi\theta_i\) at \(p_i\).
Expressed in complex coordinates, this quest implies the existence of a local coordinate \(z\), where \(g\) can be written as
\[ \mathrm{d}s = \frac{2\alpha \lvert z \rvert^{\alpha-1} \lvert \mathrm{d}z \rvert}{1 + K \lvert z \rvert^2}, \]
with \(\alpha = \theta_i\) at \(p_i\) and \(\alpha = 1\) outside \(P\). This line of inquiry has historical roots tracing back to the works of Picard and Poincar\'{e} \cite{Poin1898, Pic1905}. A surface equipped with such a metric and constant curvature is referred to as a \textbf{cone hyperbolic, flat}, or \textbf{spherical surface}.

The Gauss-Bonnet Formula provides a necessary condition for the existence of such metrics, stipulating that \(2 - 2g + \sum_{i=1}^n (\theta_i - 1)\) should share the same signature as \(K\). For \(K = -1, 0\), this condition is also sufficient, and the metric is unique up to scaling \cite{Heins62, McO88, Tro91}. However, in the case of \(K = +1\), the problem remains wide open, with only partial results available, as found in \cite{Tro89, Tro91, LuoT92, UYa00, Erm04, LinWang10, BDMM11, EGT14, EG15, Erm20sph, LSX21, dBP22}. A comprehensive survey \cite{Erm21arxiv} detailing the story and progress is also recommended. Neglecting the underlying complex structures of cone spherical surfaces, we are aware of the constraint on their cone angles. For \(g > 0\), the Gauss-Bonnet condition is also sufficient \cite{MonPa19}. When \(g = 0\), in addition to the Gauss-Bonnet condition, the complete angle constraint involves additional profound conditions, detailed in \cite{MonPa16, Li19, Erm20}.

Partial results on the moduli space of spherical surfaces also exist. For example, small deformations of such surfaces are studied in \cite{MzZhu20, MzZhu22}. Local properties of the Teichm\"uller space with small prescribed cone angles, using geometric analysis and elliptic operators, are examined in \cite{MzWs17}. A walls-and-chambers structure of the moduli space with a fixed topological type and variable cone angles is obtained in \cite{Tah22}, through a geometric decomposition of spherical surfaces.
Results on spherical tori with a single conical singularity are particularly rich. These surfaces can be studied using elliptic functions and modular forms \cite{LinWang10, ChLinW15, ChKLin17, LinW17}. The moduli space with different prescribed angles is explicitly described in \cite{EMP20} through a geometric approach.


\bigskip
There is a special class of \csphsf s with special monodromy, called \DEF{dihedral} surfaces (Definition \ref{def:dihedral}), which we better understand. They are further divided into two subclasses, known as \DEF{co-axial} and \DEF{strict dihedral} surfaces. These surfaces are closely associated with meromorphic abelian and quadratic differentials on the underlying Riemann surfaces, with real periods, as discovered by \cite{CWWX15, SCLX20}.
The sufficient and necessary conditions on the cone angles for these surfaces have been completely obtained in \cite{GT23}.
This paper continue the study of dihedral surfaces and their moduli space, focusing on the geometric aspect. Let $\MsphgnC$ and $\MsphgnD$ denote the moduli space of co-axial and strict dihedral surfaces with a fixed genus and prescribed cone angles (Definition \ref{def:moduli_D}, \ref{def:moduli_C}). Each dihedral surface induces a pair of transverse \mfoli s $(F,G)$ (Subsection \ref{sec:ll_foli}), which provides a crucial decomposition of the surface, leading to a geometric coordinate for the moduli space. Then we compute the maximum number of independent parameters that determine isometric classes of such surfaces, which corresponds to the dimension of the moduli space (Theorem \ref{thm:dim_count_D}, \ref{thm:dim_count_C}).
\begin{theorem}\label{thm:Main}
    Given an angle vector $\vec{\theta}=(\theta_1, \cdots, \theta_n) \in \RR^n_+$ with $\theta_i\neq1$, and $2g-2+n>0$. Assume that all the moduli spaces as follows are non-empty.
    \begin{enumerate}
        \item The maximal real dimension of $\MsphgnD$ is $2M_E+M_O+2g-2$, where $M_E$ is the number of integers in $\vec{\theta}$, $M_O$ is the maximal even number not exceeding the number of half-integers in $\vec{\theta}$.
        \item The real dimension of $\MsphgnC$ is $2M_E+2g-1$ if $(g,n-M_E)\neq(0,0)$, where $M_E$ is the number of integers in $\vec{\theta}$. And the real dimension of $\MsphC{0}{n}{\vec{\theta}}$ is $2n-3$ when $g=0, M_E=n$.
    \end{enumerate}
\end{theorem}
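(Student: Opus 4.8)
The plan is to obtain Theorem~\ref{thm:Main} as the combination of the two sharper dimension formulas, Theorems~\ref{thm:dim_count_D} and~\ref{thm:dim_count_C}; both rest on the same mechanism, which I outline here. First I would translate the problem via the correspondence recalled above (cf.\ \cite{CWWX15, SCLX20, GT23}): a strict dihedral (resp.\ co-axial) surface with data $(g,\vtheta)$ is equivalent to a compact genus-$g$ Riemann surface $X$ carrying a meromorphic quadratic (resp.\ abelian) differential with real periods --- for the quadratic differential, the periods of its square root on the canonical double cover --- whose divisor at $p_1,\dots,p_n$ is constrained by $\vtheta$ and which may acquire further singularities at unmarked points only in controlled ways. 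Under this dictionary the transverse pair $(F,G)$ of Subsection~\ref{sec:ll_foli} is exactly the horizontal/vertical foliation pair of the differential, and the fact (established earlier) that $(F,G)$ recovers the surface is what lets the foliation data serve as faithful coordinates.

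Next I would use the decompositions of the previous sections to build explicit geometric charts. Cutting a dihedral surface along the singular leaves of $(F,G)$ breaks it into finitely many elementary pieces --- lunes, footballs, and cylinders --- each carrying a short list of real parameters (angular widths, the latitudes of the preimages of the two poles, and the circumferences and twists of the cylinders) subject to the linear gluing relations that reassemble $X$; equivalently one passes to period coordinates on the ambient stratum of differentials and intersects with the real-periods locus. This exhibits $\MsphgnD$ (resp.\ $\MsphgnC$) locally as a real-analytic set, so its dimension becomes a matter of counting independent parameters.

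Then I would carry out the count, in which the arithmetic of $\vtheta$ enters only through the local behaviour at the cone points. If $\theta_i\in\ZZ$ the holonomy around $p_i$ is trivial, so the developing map may send $p_i$ to an \emph{arbitrary} point of $\STwo$: a free parameter worth two real dimensions, and summing over such $i$ produces the term $2M_E$ in both formulas. If $\theta_i$ is a half-integer but not an integer then, in the dihedral case, the local holonomy is a rotation by $\pi$ which lands in the dihedral group as the pole-swapping involution precisely when the developing image of $p_i$ lies on the equator, a one-parameter family; but the relation forcing the total holonomy around the full system of loops to close up imposes a parity constraint, so only an even number of such points can be activated at once --- whence $M_O$, the largest even number of half-integer entries, and whence the word ``maximal'' in part~(1), since the non-activated choices (and the option of sending an integer-angle $p_i$ to a pole instead) give components of strictly smaller dimension, whereas the co-axial moduli space turns out to be equidimensional. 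All other entries of $\vtheta$ contribute nothing. The global summand, $2g-2$ in the dihedral case and $2g-1$ in the co-axial one, packages the moduli of $X$, the holonomy along the genus loops and the period and residue constraints; the discrepancy of $1$ is the familiar offset between a stratum of abelian and a stratum of quadratic differentials.

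I expect two points to carry the weight. The first is the parity argument that replaces the count of half-integer angles by $M_O$: it needs a careful classification of the admissible local holonomies at each $p_i$ together with a global closing-up (or double-cover parity) obstruction. The second is surjectivity --- showing the charts above fill out an open set of the asserted dimension, so that the parameter count is the dimension rather than a mere bound --- which forces one to feed the deformation constructions of the previous sections and the existence theory of \cite{GT23} back into the argument. Along the way the low-complexity configurations must be handled separately; the one that actually changes the answer is $g=0$ with every cone angle integral ($M_E=n$), where the holonomy is trivial, the axis of the co-axial structure is undetermined, and one must quotient by the whole rotation group $\SOthr$ rather than by the one-dimensional stabilizer $\Otwo$ of an axis, so that the dimension drops by two to $2n-3$; the analogous degenerate dihedral configurations are excluded by the standing non-emptiness hypothesis.
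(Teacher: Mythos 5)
Your overall strategy is the paper's: induce the transverse pair $(F,G)$, cut along its singular leaves into elementary pieces, count widths, latitudes and twists subject to the linear gluing relations (the connection matrix of Definition~\ref{def:cnnct_mtr}), and invoke Gendron--Tahar to realize the maximal type partition. Your parity mechanism for $M_O$ is also the one at work in the paper (a non-integral half-integer angle forces an odd-index singularity, and Poincar\'e--Hopf --- equivalently your holonomy/double-cover parity --- forces these to come in even number), and your treatment of the $g=0$, all-integer co-axial case via the two-dimensional quotient $\PSUtwo/\Otwo$ is exactly Lemma~\ref{prop:id_CoAx}(1).

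There is, however, one genuine gap. You justify using the foliation data as ``faithful coordinates'' by the fact that $(F,G)$ recovers the surface, but that implication goes the wrong way: the decomposition parameters coordinatize the moduli space of \emph{foliated} surfaces (equivalently, of differentials with real periods), whereas a single underlying metric can carry several distinct pairs $(F,G)$, one for each $\PSUtwo$-normalization of the developing map conjugating the monodromy group into $\Otwo$ (see Examples~\ref{eg:three_cone_sphere} and~\ref{eg:one_cone_torus}). Your count therefore yields $\dim\MsphgnDFT$, and to descend to $\MsphgnDT$ one must prove the forgetful projection has discrete fibers. This is the paper's Identification Lemma (Proposition~\ref{prop:id_SDhd}): for strict dihedral surfaces the fiber has $1$ or $3$ elements, with $3$ occurring precisely when the monodromy group is the Klein four-group. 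Similarly, in the co-axial case with $g>0$ you tacitly assume the generic surface has non-trivial monodromy, so that the orientable foliation is unique; when the monodromy is trivial the fiber is the two-dimensional $\PSUtwo/\Otwo$ even for $g>0$, and one needs Proposition~\ref{prop:trivial_mono}(2) --- a rank argument coupling the connection matrix with a homology basis of the surface --- to show that such surfaces form a lower-dimensional subset. Without these two fiber-control statements the parameter count does not transfer from the foliated moduli spaces to $\MsphgnD$ and $\MsphgnC$.
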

The constraints on the cone angle vector such that all the above moduli spaces are non-empty are quite complicated, whose full list for various cases could be found in Gendron-Tahar's work \cite{GT23}.
For most settings of $(g,n)$, the only constrain is the strengthened Gauss-Bonnet inequality \cite[Theorem 1.2]{GT23}.

The term ``maximal dimension'' suggests that the moduli space may have disjoint components of different dimensions (see Example \ref{eg:disjoint_comp}).
This leads to a refined moduli space by prescribing the topological type of the foliation $(F,G)$ (see Definition \ref{def:moduli_D}, \ref{def:moduli_C}). This is equivalent to restricting to a certain stratum of the differentials. In fact, we count the dimension of such refined moduli spaces (Theorem \ref{thm:dim_count_DT}, \ref{thm:dim_count_CT}).

Similar results for the case of $g=0$ are also obtained in \cite{CLY22axv} by solving curvature equations. It is important to note that the moduli space defined in this work is slightly different from the usual one. We refer to Remark \ref{rmk:equi_class} for more details on the specific nature of this moduli space.

For completeness, the cases with $2g-2+n\leq0$ are discussed in Section \ref{sec:2cone_sphere}.

\medskip
\textbf{Remark.} Studying the entire moduli space $\Msphgn$ is a much more challenging problem, which currently seems to be beyond our reach. In \cite[Section 6]{MonPa19}, it is asserted that when $\vec{\theta}$ satisfies the so-called “non-bubbling condition”, the real dimension of the moduli space of spherical surfaces with genus $g$ and $n$ prescribed cone angles is precisely $6g - 6 + 2n$. However, this non-bubbling condition seems to be irrelevant to the angle constraint for dihedral surfaces. On one hand, the angle vector for a co-axial surface never satisfies the non-bubbling condition. However, there do exist strict dihedral surfaces whose angle vectors satisfy
the non-bubbling condition.


\bigskip
Here is our strategy for proving the main theorem, using the strict dihedral case as an example (Theorem \ref{thm:dim_count_D}). A dihedral surface together with a pair of foliations is called a foliated surface (Definition \ref{def:LL_foli}).
Our first step is counting the dimension of the moduli space of foliated surfaces, with a prescribed angle vector and topological type of the foliations 
(Definition \ref{def:moduli_D} and Theorem \ref{thm:dim_count_DT}).
This involves studying the strip decomposition of a foliated surface (Definition \ref{def:strip_decomp}) and the concept of generic surfaces (Definition \ref{def:generic_foliated}). 
The dimension of this subset is determined using the connection matrix (Definition \ref{def:cnnct_mtr}), which captures the linear constraints on the geometric parameters of a strip decomposition.
Next, we apply the identification lemma (Subsection \ref{subsec:iden}) to pass this result to a subset of the original moduli space, by forgetting the foliation structure. The identification lemma shows that the choice of strip decomposition for a given strict dihedral surface is always finite (Proposition \ref{prop:id_SDhd}). 
Finally, combining our findings with the results in \cite{GT23}, we obtain the maximum dimension of moduli spaces among all possible topological types of foliations.

Figure \ref{fig:flow_chart} provides a graphical representation of the internal logical relations between the key concepts and main results. Additionally, we have a separate flow chart Figure \ref{Summary} at the end, summarizing the entire process. The co-axial case follows a similar proof strategy, with some technical modifications.

\begin{figure}[h]\centering
\begin{tikzpicture}[node distance=10pt, >=Stealth, auto]
    \tikzset{
      block/.style = {rectangle, draw, align=center, minimum height=1.6em, font=\footnotesize}, 
    }
    \node [block] (foli) {Definition \ref{def:LL_foli}};
    \node [block, below left =of foli, xshift=0cm] (strip) {Definition \ref{def:strip_decomp}};
    \node [block, below right=of foli, xshift=0cm] (annulu) {\ Definition \ref{def:ann_decomp}\ };
    \node [block, below=of strip, xshift=-1.5cm] (matrix) {Definition \ref{def:cnnct_mtr}};
    \node [block, below=of strip, xshift=1.5cm] (generic) {Definition \ref{def:generic_foliated}};
    \node [block, below=of annulu] (twist) {Definition \ref{def:twist}};
    \node [block, rounded corners, below=of generic, xshift=-1.5cm] (rank) {Proposition \ref{prop:rank}};
    \node [block, rounded corners, below=of generic, xshift=1.5cm] (lowdim) {Corollary \ref{cor:low_dim}};
    \node [block, rounded corners, below=of rank] (MsphT) {Theorem \ref{thm:dim_count_DT}};
    \node [block, rounded corners, below=of lowdim] (idlemma) {Proposition \ref{prop:id_SDhd}};
    \node [block, rounded corners, below=of MsphT] (Msph) {Theorem \ref{thm:dim_count_D}};
    \draw[->][rounded corners] (foli.east) -| (4,-2) |- (idlemma.east);
    \draw[->][rounded corners] (foli.west) -| (strip.north);
    \draw[->][rounded corners] (foli.east) -| (annulu.north);
    \draw[->][rounded corners] (strip.west) -| (matrix.north);
    \draw[->][rounded corners] (strip.east) -| (generic.north);
    \draw[->] (annulu) -- (twist);
    \draw[->][rounded corners] (matrix.south) |- (rank.west);
    \draw[->][rounded corners] (generic.south) |- (rank.east);
    \draw[->][rounded corners] (generic.south) |- (lowdim.west);
    \draw[->][rounded corners] (twist.south) |- (lowdim.east);
    \draw[->] (lowdim) -- (MsphT);
    \draw[->] (rank) -- (MsphT);
    \draw[->] (idlemma) -- (MsphT);
    \draw[->] (MsphT) -- (Msph);
\end{tikzpicture}
\caption{Leitfaden.}
\label{fig:flow_chart}
\end{figure}
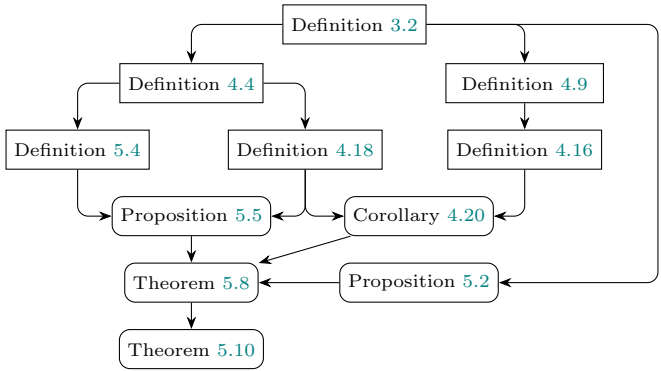
The paper is organized as follows.
\begin{itemize}
    \item Section 2 provides preliminaries on \csphsf s, abelian and quadratic differentials, and measured foliations. We also recall the definitions of co-axial and dihedral metrics.
    \item Section 3 introduces the main character, the longitude and latitude foliation on dihedral surfaces. Various moduli spaces are defined, preparing for the final dimension count. 
    \item Section 4 presents two mutually “perpendicular” geometric decompositions of dihedral surfaces based on the above two foliation structures. 
    A range of geometric deformations are presented, which can be considered analogues to the periodic coordinates used for translation surfaces.
    \item Section 5 is dedicated to dimension counting. We define the connection matrix and prove the identification lemma. The dimension counting for various moduli spaces is then carried out sequentially. 
\end{itemize}

\noindent \textbf{Acknowledgements.} The authors wish to extend their sincere appreciation to Guillaume Tahar for his outstanding series of lectures during his visit to USCT in August 2023 and a conversation in December 2023. The concepts of decomposition and sliding were significantly influenced by the enlightening discussions we had with him. We are also grateful to Jijian Song for his stimulating dialogues on this subject, and to Alexandre Eremenko, Dídac Martínez-Granado, Weixu Su, Dong Tan and Bing Wang for valuable suggestions. 
B.X. is supported in part by the Project of Stable Support for Youth Team in Basic Research Field, CAS (Grant No. YSBR-001) and NSFC (Grant Nos. 12271495, 11971450, and 12071449).

\section{Preliminaries} \label{sec:csphsf}
In our settings, the underlying Riemann surface is not predetermined. When we refer to a compact ``surface'', it encompasses both a topological surface, possibly with a metric, and a Riemann surface, depending on the specific context. And we prefer using ``\csphsf s'' rather than ``\csphmtr s''.

\subsection{Cone spherical surfaces}\label{sec:spherical_cone}
We are studying a Riemannian metric on a compact surface of constant curvature $K\in\{-1,0,+1\}$ with conical singularities.
This indicates that at each point, in geodesic polar coordinates, the metric is locally expressed as
\begin{equation*}
    \mathrm{d}s^2=\left\{   \begin{array}{lll}
        \mathrm{d}r^2 + \alpha^2 \sinh^2 r \mathrm{d}\theta^2 & , K=-1 & \quad \text{(hyperbolic geometry)} \\
        \mathrm{d}r^2 + \alpha^2 r^2 \mathrm{d}\theta^2 & , K=0 & \quad \text{(flat geometry)} \\
        \mathrm{d}r^2 + \alpha^2 \sin^2 r \mathrm{d}\theta^2 & , K=+1 & \quad \text{(spherical geometry)} \\
                \end{array}
    \right.
\end{equation*}
with the cone angle of that point being $2\pi\alpha>0$. Here $\alpha\neq1$ only at finitely many isolated points, which we refer to as \DEF{cone points}. In this paper we focus on the case of $K=+1$. 

\bigskip
There is also a geometric description. Recall some notations of \cite{MonPa16}.
Let $\mathbb{S}^2\subset\mathbb{R}^3$ be the unit sphere, parameterized by spherical coordinates
$$ \STwo = \big\{\ (\psi,\phi):=(\cos\psi\sin\phi, \sin\psi\sin\phi,\cos\phi) \ \big\rvert\ \psi\in[0,2\pi),\ \phi\in[0,\pi]\ \big\} $$
and $N=(0,0), S=(0,\pi)$ be the north and south pole.
Given $\alpha\in(0,1)$ and $r\in(0,\pi]$,
\[ \SYM{B_\alpha(r)} :=\big\{\ (\psi,\phi) \ \rvert\ \psi\in[0,2\pi\alpha],\ \phi\in[0,r)\ \big\} 
\]
is called a \DEF{sector} of angle $2\pi\alpha$ centered at $N$ with radius $r$. Let $P=(0, r), Q=(2\pi\alpha, r)$ be the other two vertices, and $NP, NQ$ be the two geodesic boundary radii of the sector.
For any $\theta>0$, let $M$ be the smallest positive integer such that $\theta/M<1$. We take $M$ copies of $B_{\theta/M}(r)$, denoted by $B^{(i)}$ with vertices $N^{(i)}, P^{(i)}, Q^{(i)}$, $i=1,\cdots, M$.
Now gluing the pair of radii $N^{(i-1)}Q^{(i-1)}$ and $N^{(i)}P^{(i)}$ by the $\phi$ parameter for each $i=\EMP{2},\cdots, M$, we obtain a topological sector denoted by \SYM{$B_{\theta}(r)$}, and obviously generalizing the previous one. This serves as a local model for cone vertices of polygons. See figure \ref{fig:Cone}.

If the remained two boundary radii $N^{(M)}Q^{(M)}, N^{(1)}P^{(1)}$ are further glued, we obtain a \DEF{cone} of angle $2\pi\theta$ with radius $r$, denoted by \SYM{$S_\theta(r)$}. This serves as a local model for cone points on surfaces.
In both $B_{\theta}(r)$ and $S_\theta(r)$, the unique image of all $N^{(i)}$'s is the cone point. Note that every point on $\STwo$ has a \nbhd\ isometric to $S_{1}(r)$ for some small $r$. Then we can consider a \csphsf\ as a compact metric surface such that every point has a neighborhood isometric to some $S_{\theta}(r)$, with $\theta=\theta_i$ at cone point $p_i$ and $\theta=1$ otherwise. This naturally induces a complex structure on the surface.

\begin{figure}[hbt]
  \centering
  \includegraphics[width=0.9\textwidth]{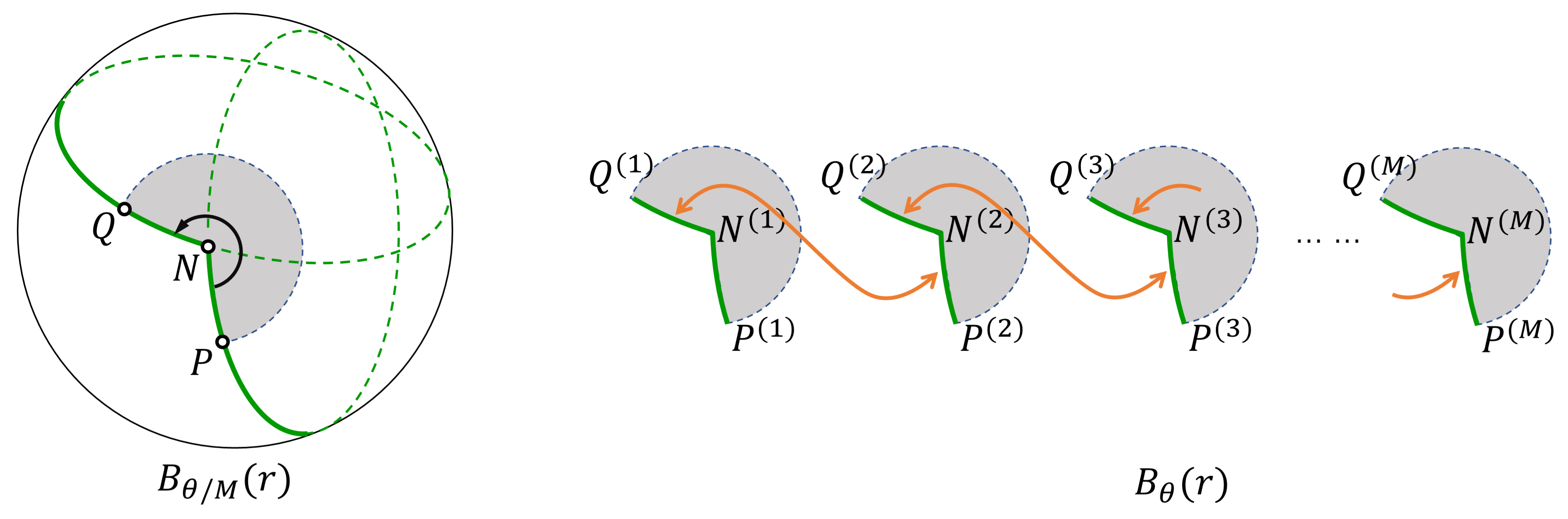}
  \caption{Gluing sectors of small angle to obtain a local model for cone vertex.}
  \label{fig:Cone}
\end{figure}

When $r=\pi$, the closure $\SYM{B_\theta}:=\overline{B_\theta(\pi)}$ is called a spherical \DEF{bigon} of angle $2\pi\theta$, or a bigon of width $\theta$ (see Section \ref{sec:strip_decomp}). It is bounded by two geodesic segments of length $\pi$, and homeomorphic to a closed disk. It is foliated by a continuous family of disjoint geodesic arcs of length $\pi$ connecting the two cone points. 
$\SYM{S_\theta}:=\overline{S_\theta(\pi)}$ is called a \DEF{football} of angle $2\pi\theta$. It is a topological sphere with two cone points of the same angle, so it is also called a foliated singular sphere in \cite{Tah22}.

\subsection{Developing map, monodromy and dihedral surface}\label{sec:dihedral}
We will frequently switch between the unit sphere $\STwo$ and the Riemann sphere $\CC\cup\{\infty\}$. 
The isometry group $\SOthr$ of the unit sphere is identified with
$$ \PSUtwo=\left\{\ z\mapsto \frac{az+b}{-\bar{b}z+\bar{a}} \ \bigg\rvert\ a,b\in\CC, \abs{a}^2+\abs{b}^2=1 \ \right\} \big{/} \pm I \subset \mathrm{PGL}(2,\CC) $$
which acts on the Riemann sphere. The standard spherical metric on the Riemann sphere is given by $g_{\rm FS} = 4\lvert \mathrm{d}w \rvert ^2/(1+\lvert w \rvert ^2)^2$.

Given a \csphsf\ $\Sigma$, its metric naturally induces a complex structure on it. There exists a multi-valued meromorphic function $D$ on $\Sigma$, branched at the cone points in $P$. Furthermore, the pullback metric $D^*(g_{\rm FS})$ of $g_{\rm FS}$ is coincident with the \csphmtr\ on $\Sigma$. Such a map $D$ is called a \DEF{developing map}.
It has monodromy in $\PSUtwo$. This means that with a base point $z_0\in \Sigma- P$, for any closed curve $\gamma\subset\Sigma- P$ passing through $z_0$, the analytic continuation $D(\gamma\cdot z)$ of $D$ along $\gamma$ can be expressed as $D(\gamma\cdot z) = \rho_D(\gamma)\big( D(z) \big)$, where $\rho_D(\gamma)\in\PSUtwo$. The element $\rho_D(\gamma)$ depends only on the homotopy class of $\gamma$. Consequently, a developing map $D$ induces a \DEF{monodromy representation} $\rho_D:\pi_1(\Sigma-P,z_0) \to \PSUtwo$ and its image is called \DEF{monodromy group}.
The developing map is an orientation preserving local isometry, unique up to post-composing by an element of $\PSUtwo$. Similarly, the monodromy representation and group are unique up to conjugation by an element in $\PSUtwo$.
See \cite{UYa00, Erm04, CWWX15, SCLX20} for more about the analytic aspect of developing map.

The developing map can also be defined geometrically \cite{MonPa16, EMP20, MP23}, behind which the general idea is the concept of $(G, X)$-manifold \cite[Chapter 3]{Thu3D}. 
Let $\pi:\tilde{\Sigma}' \to \Sigma -P$ be the universal cover of the punctured surface $\Sigma -P$. Then the developing map $D$ is a well-defined map from $\tilde{\Sigma}'$ to $\STwo$. 
This map extends to the metric completion $\hat{\Sigma}$ of $\tilde{\Sigma}'$, which we continue to call a developing map and denote it by $D$. 
$\hat{\Sigma}$ also admits a $\pi_1(\Sigma-P, z_0)$-action and $D$ remains to be $\rho_D$-equivariant, namely 
$D(\gamma\cdot z) = \rho_D(\gamma)\big( D(z) \big)$ for all $z\in\hat{\Sigma}$ and $\gamma\in \pi_1(\Sigma-P, z_0)$.
The covering map also extends and we continue to denote it by $\pi:\hat{\Sigma} \to \Sigma$. See the diagram below.
Points in $\hat{\Sigma} - \tilde{\Sigma}'$ are the preimages $\pi^{-1}(P)$, and each of them corresponds to a homotopy class of primitive loops passing through the base point $z_0$ that surround some cone point in $P$. The monodromy representation of such a homotopy class, as an element in $\SOthr$, fixes the image of the added point. Around a point in $\hat{\Sigma} - \tilde{\Sigma}'$, the extended developing map $D$ acts like an infinite covering of the unit disk branched at the origin. 
A detailed version in hyperbolic geometry is provided in \cite[Section 2]{ELS22}, 
and can be easily transplanted to spherical geometry. Using the concept of $(G,X)$-manifold, we have $X=\STwo$ and $G=\SOthr \cong \PSUtwo$ in our setting. 
\[\xymatrix@C=1mm{
    \pi_1(\Sigma-P) \curvearrowright & \widetilde{\Sigma}' \ar[d]_{\pi} & \subset & \ \widehat{\Sigma} \ \ar[d]_{\pi} \ar[rr]^{D}
    & \quad\quad & \mathbb{S}^2 & \curvearrowleft  \rho(\pi_1)   
    \\
     & \Sigma-P & \subset & \ \Sigma\  & & \\
    }
\]

Typically, the monodromy group of a \csphsf\ can be quite complicated. However, there are special classes of surfaces with simpler monodromy. These surfaces are highly intriguing as they demonstrate a significant connection with meromorphic differentials and foliations on Riemann surfaces, making them our primary focus of study. We shall denote the monodromy group by $\rho(\pi_1)$ in short.

\begin{definition}\label{def:coaxial}
    A \csphsf\ is called \DEF{co-axial} if its monodromy group fixes an axis of ${\Bbb S}^2$. In other words, $\rho(\pi_1)$ is conjugated to a subgroup of
	$$ \Uone := \left\{\ z\mapsto \mathrm{e}^{\ii\theta}z = \frac{\mathrm{e}^{\ii\theta/2}z}{\mathrm{e}^{-\ii\theta/2}} \ \bigg\rvert\ \theta\in\RR \ \right\} \cong \mathrm{SO}(2). $$	
	This concept is equivalent to \DEF{reducible metric} in \cite{UYa00}.
\end{definition}

This monodromy condition is slightly generalized in \cite{SCLX20, GT23}.
\begin{definition}\label{def:dihedral}
    A \csphsf\ is called \DEF{dihedral} if its monodromy group preserves a pair of antipodal points on the unit sphere ${\Bbb S}^2$. Equivalently, $\rho(\pi_1)$ is conjugated to a subgroup of
    $$ \Otwo := \left\{\ z\mapsto \mathrm{e}^{\ii\theta}z,\ z\mapsto \frac{\mathrm{e}^{\ii\theta}}{z} \ \bigg\rvert\ \theta\in\RR \ \right\} \cong \ZZ_2 \ltimes \Uone. $$
    A dihedral surface that is not co-axial is called \DEF{strict dihedral}.
\end{definition}

\newcommand{\Dev}{\overline{D}}
\newcommand{\CUni}{\widetilde{\Sigma_\mathfrak{c}}}

\subsection{Differentials and dihedral surfaces}
Let $X$ be a compact Riemann surface of genus $g\geq0$ and $K_X$ be its canonical line bundle. A meromorphic \DEF{abelian differential}, denoted as $\omega$, is a meromorphic 1-form on $X$. A meromorphic \DEF{\qdf}, denoted as $q$, is a meromorphic section of $K_X^{\otimes 2}$. 
In a coordinate chart, the differential can be expressed as $\omega(z)\mathrm{d}z$ or $q(z)\mathrm{d}z^2$, where $\omega(z)$ and $q(z)$ are meromorphic functions compatible with coordinate transformations.
The zeros and poles of a differential are called the \DEF{critical points}, while all other points are \DEF{regular points}. 
We will only consider abelian differentials with at worst simple poles, and \qdf s with at worst double poles.
A \qdf\ is locally the square of an abelian differential near regular points, 
but this property does not extend globally. We call a \qdf\ \DEF{primitive} if it is not the square of an abelian differential globally.

A differential on a Riemann surface defines a conformal cone flat metric. For a regular point, it is always possible to choose coordinates where the differential is expressed simply as $\mathrm{d}z$ or $\mathrm{d}z^2$. Near a simple pole of an abelian differential or a double pole of a \qdf, the metric takes the forms of a half-infinite cylinder. All the other critical points become cone points of angle $k\pi (k\in\NN_+)$ in this metric. We refer to \cite{Strebel, HubbardV1} for more about \qdf s.

\bigskip
Now let $\Sigma$ be a dihedral surface with a developing map $D$. 
By composing a $\PSUtwo$ element, we may assume that $\rho(\pi_1)\subset \Otwo$. Note that the \qdf\ $\mathrm{d}z^2/z^2$ on the Riemann sphere is $\Otwo$-invariant, and that the developing map $D$ is $\rho$-equivariant, $\pi_* D^* \left( \mathrm{d}z^2/z^2 \right)$ is a well-defined differential on $\Sigma$. 
For co-axial surfaces, we can pull back the 1-form $\mathrm{d}z/z$. 
Conversely, a meromorphic \qdf\ $q$ induces a dihedral \csphmtr\ if and only if the integral of $\sqrt{q}$ is real along any closed curve on $\Sigma-P$ and for any choice of analytic branch of the square root \cite{Li19,SCLX20}. 
The “real” property of $q$ can be verified as well-defined. 

This observation establishes a connection between \csphsf s, \qdf s and cone flat surfaces, which is of significant importance to the current work. The subsequent parts will provide a geometric-topological perspective. 
For more details on dihedral surfaces and differentials, or the complex analytical aspect of this relationship, see \cite{CWWX15, SCLX20, GT23}. 

\subsection{Measured foliations}
The topological properties of an abelian or quadratic differential can be understood through a pair of transverse \mfoli s induced by it. In this paper, we focus on compact surfaces without boundary. 
We refer to \cite[Expos\'{e} 5]{FLP} or \cite[Section 2]{PapTh07} for further details on topological \mfoli s. A comprehensive understanding of connections between differentials, foliations, and flat geometry can be found in \cite[Chapter 5]{HubbardV1}. 
A \DEF{foliation} on a surface is a decomposition of the 2-dimensional surface into 1-dimensional immersed manifolds. 
There may be isolated points, called \DEF{singularities}, where this decomposition is not well-defined. 
Alternatively, in the smooth category, a foliation can be understood as a line field with a set of isolated singular points \cite[Part Two, Chatper \uppercase\expandafter{\romannumeral3}]{Hopf83}.
The 1-dimensional manifolds, or the trajectories of the line field, are called \DEF{leaves}. The singularities are the points where multiple leaves intersect. Leaves that meet singularities are called \DEF{singular leaves}, while those without any singularity are called \DEF{regular leaves}. A foliation is \DEF{orientable} if it possesses a globally continuous orientation for all its leaves outside singularities.

According to Definition 1.2 of \cite[Part Two, Chatper \uppercase\expandafter{\romannumeral3}]{Hopf83}, each singularity of a foliation can be assigned with an index, which measures the total change in angle of the tangent line when going around a small primitive loop around the singularity. This index is akin to the one used for zeros of a vector field, typically taking on a half-integer value. 
For convenience, the \DEF{index} $\SYM{\mathrm{Ind}(p)}$ of a singularity $p$ of a foliation here is defined as twice the index specified in \cite{Hopf83}. 
In particular, when $\mathrm{Ind}(p)\leq1$, $p$ is an \DEF{$s$-prong}, where $s=2-\mathrm{Ind}(p)$ denotes the number of leaf segments converging at that singularity. Notably, a singularity of index $0$ is actually regular. This definition aligns with the one in \cite[Expos\'{e} 5]{FLP} and is equivalent to ``projective index'' in \cite{CrGr17}. 
Figure \ref{fig:index} provides a local picture of singularities of index $+2,+1,0,-1$. 
We will focus on singularities with index $\leq2$.

\begin{figure}[ht]
  \centering
  \includegraphics[width=\textwidth]{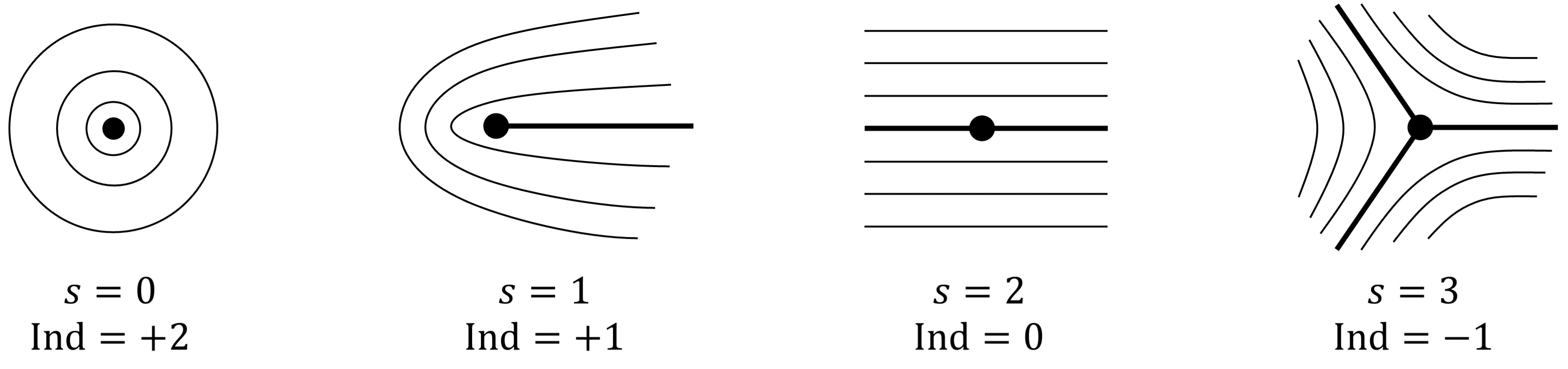}
  \caption{Examples of singularities with various indexes.}
  \label{fig:index}
\end{figure}

The indices of singularities provide global information about the topology of surface. We can derive an index formula that is analogous to the \PH\ theorem for vector fields. This formula can also be viewed as a topological variant of the Gauss-Bonnet theorem.
See \cite[Proposition 5.1]{FLP} or \cite[Part Two, Chatper \uppercase\expandafter{\romannumeral3}, 2.2 Theorem \uppercase\expandafter{\romannumeral2}]{Hopf83} for a detailed proof. Also see \cite{CrGr17} for a rigorous statement and complete proof that holds for line fields in all dimension.

\begin{theorem}[\PH\ Theorem]\label{thm:PH}
    Suppose $F$ is a foliation on a closed surface $S$ of genus $g\geq0$. Then
    \begin{equation}\label{eq:PH}
    \sum_{\mathrm{singularities}\ p} \mathrm{Ind}(p) = 2 (2- 2g).
    \end{equation}
\end{theorem}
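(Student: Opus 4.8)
\medskip
\noindent\textbf{Proof strategy.}
The plan is to reduce the identity to the classical Poincar\'e--Hopf theorem for vector fields, at the cost of passing to an auxiliary double cover. Write $\chi(S)=2-2g$, let $p_1,\dots,p_k$ be the singularities of $F$ with $p_i$ an $s_i$-prong (so $s_i\neq2$), and set $S':=S\setminus\{p_1,\dots,p_k\}$. On $S'$ the foliation $F$ is nonsingular, hence defines a line field, i.e. a rank-one subbundle $L\subset TS'$. First I would record the local normal form of an $s$-prong: traversing a small loop about the singularity, the leaf direction turns by a net angle $(2-s)\pi$, so $L$ has half-integer \emph{line-field index} $(2-s)/2=\frac{1}{2}\mathrm{Ind}(p)$ there. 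Thus it suffices to prove that the line-field indices of $L$ sum to $\chi(S)$.

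Next I would pass to the orientation double cover $\pi\colon\widehat{S'}\to S'$ of $L$ --- the double cover on which $\pi^{\ast}L$ is orientable --- and, after fixing a Riemannian metric, let $\widehat v$ be a unit section of $\pi^{\ast}L$, a nowhere-zero vector field on $\widehat{S'}$. Then I would compactify: near $p_i$ the line field $L$ is orientable precisely when $s_i$ is even, so the preimage under $\pi$ of a small punctured disk around $p_i$ is two disjoint punctured disks when $s_i$ is even and a single connected (degree-two) punctured disk when $s_i$ is odd; filling in all these punctures produces a closed oriented surface $\widehat S$ over which $\widehat v$ extends, with zeros exactly at the added points. Writing $e$ and $r$ for the numbers of even- and odd-prong singularities ($e+r=k$), a short count gives $\chi(\widehat S)=2\chi(S')+2e+r=2\chi(S)-r$.

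The remaining ingredient is the index of $\widehat v$ at each added point. Over an even $s_i$-prong, $L$ was already orientable, so each of the two added points carries a zero of index $(2-s_i)/2$, contributing $2-s_i$ in total. Over an odd $s_i$-prong, writing $\pi$ near the added point in the form $\zeta=w^{2}$, the pulled-back leaf direction turns by $2(2-s_i)\pi$ while $d\pi$ (multiplication by $2w$) subtracts $1$ from the winding, so the unique added point carries a zero of index $1-s_i$. Hence $\sum\mathrm{ind}(\widehat v)=\sum_i(2-s_i)-r=\sum_i\mathrm{Ind}(p_i)-r$, and Poincar\'e--Hopf on the closed surface $\widehat S$ turns this into $\sum_i\mathrm{Ind}(p_i)-r=\chi(\widehat S)=2\chi(S)-r$, i.e. the claimed identity $\sum_i\mathrm{Ind}(p_i)=2(2-2g)$.

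The main obstacle, and the step deserving the most care, is the local index bookkeeping at the odd prongs: the $-1$ produced by $d\pi$ is exactly what preserves the factor $2$ in $2(2-2g)$, and it is easy to lose. One must also check that $\widehat S$ is a genuine closed oriented surface and that the construction is metric-independent; note that $r$ is necessarily even, which is forced a posteriori since $\chi(\widehat S)$ is even. Alternatively, I could package the middle steps as the Poincar\'e--Hopf theorem for line fields (sum of line-field indices $=\chi$) and quote it, or --- if one is willing to invoke that $F$ is topologically the horizontal foliation of a meromorphic quadratic differential $q$ on some complex structure --- deduce the formula directly from $\deg K_X^{\otimes2}=2(2g-2)$ together with $\mathrm{ord}_{p}(q)=s_p-2=-\mathrm{Ind}(p)$.
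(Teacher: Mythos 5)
The paper does not actually prove Theorem~\ref{thm:PH}: it states the formula as a standard fact, remarking only that it is ``induced from the Poincar\'e--Hopf theorem for vector fields.'' Your double-cover argument is a correct and self-contained realization of precisely that reduction. The bookkeeping is right: the line-field index at an $s$-prong is $(2-s)/2$, the Euler-characteristic count gives $\chi(\widehat{S}) = 2\chi(S') + 2e + r = 2\chi(S) - r$, and the crucial $-1$ correction at the branch points over odd prongs (index $1-s_i$ rather than $2-s_i$, coming from the extra $\arg(w)$ produced by $d\pi = 2w$) is exactly what cancels the $-r$ so that the final identity $\sum\mathrm{Ind}(p_i) = 2\chi(S)$ comes out clean. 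Your observation that $r$ must be even is consistent; it can also be seen a priori from the integrality of $\sum_i (2-s_i)/2$. Everything also works if the double cover happens to be disconnected (when $L$ is orientable on $S'$), since Poincar\'e--Hopf applies component by component. The alternative you sketch at the end --- realizing $F$ as the horizontal foliation of a meromorphic quadratic differential $q$ and reading the formula off $\deg K_X^{\otimes 2} = 2(2g-2)$ together with $\mathrm{ord}_p(q) = s_p - 2$ --- is actually the route closest in spirit to the paper, whose foliations arise from pulled-back quadratic differentials; but the purely topological double-cover proof you give has the advantage of not presupposing a compatible complex structure or differential, and is the more natural way to flesh out the paper's one-line attribution to Poincar\'e--Hopf.
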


A \DEF{measured foliation} on a closed surface $S$ is a foliation with a transverse measure that assigns each transverse arc a positive measure. 
The measures assigned to the transverse arcs are invariant under isotopies, meaning that during the isotopy, where each point remains on the same leaf, the measures of the transverse arcs remain unchanged.

Usually, we consider the equivalence classes of measured foliations based on two types of transformations. The first type is induced by homeomorphisms of the surface that are isotopic to the identity. The second type is called Whitehead move. This kind of deformation takes place in a neighborhood of a singular leaf starting and ending at singularities, and collapses the leaf to a point. 
However, in our settings, every singularity represents a cone point. So Whitehead moves are \emph{not} allowed within finite time since they would result in the merging of cone points.

\bigskip
Each \qdf\ $q$ on a Riemann surface $X$ gives rise to a pair of transverse \mfoli s $(H_q, V_q)$. This means that $H_q, V_q$ share the same set of singularities and their respective indexes match. Moreover, outside the singularities, the leaves of $H_q$ and $V_q$ intersect transversely. 
Let us focus on the case where $q$ contains at worst poles of order 2. 
The basic case is $X=\CC$ and $q=\mathrm{d}z^2$. A horizontal trajectory of $q$ corresponds to a horizontal line with constant $y$-coordinate. All these horizontal trajectories form the horizontal foliation $H_q$. The transverse measure on a transverse arc of $H_q$ is the total variation of $y$-coordinate, given by $\abs{\mathrm{d}y}=\abs{\mathrm{Im}\sqrt{q}}$. The vertical foliation $V_q$ is defined similarly, with the $y$-coordinate replaced by the $x$-coordinate. 
For a \qdf\ on a general Riemann surface, at any regular point, one can always choose some suitable coordinate chart such that $q$ is locally expressed as $\mathrm{d}z^2$. 
At a critical point $z_0$ of $q$, there exists a coordinate chart such that $z_0$ is mapped to $0$ and locally $q=c z^k\mathrm{d}z^2$, where $c\in\CC^*$ is constant and $k\geq-2$. Then $z_0$ is a singularity of index $(2-k)$ for both foliations.
Note that around a double pole, all horizontal trajectories converge to the pole, while all vertical trajectories form closed cycles around the pole. 
For an abelian differential $\omega$, the induced foliations $(H_\omega, V_\omega)$ are all orientable, since the trajectories are integral curves of vector fields.
We refer to \cite{Strebel}, especially Chapter \uppercase\expandafter{\romannumeral3}, for more details.

\textbf{Remark.} Strebel differential, whose regular leaves of $V_q$ are all closed curves around exactly one pole, is a special type of differentials that induces dihedral metrics. They correspond to hemispherical surfaces in \cite[Section 2.5]{GT23}, and are related to limit surfaces of grafting along a multicurve \cite[Section 5.2]{Gup14}.

\section{The latitude and longitude foliation}\label{sec:LLfoli}
Now we consider a class of \mfoli s slightly adjusted from the one directly induced by differentials. These foliations provide more geometric information, making them highly useful in the study of dihedral \csphsf s.

\subsection{On the unit sphere}\label{sec:foli_sphere}
\newcommand{\latifoli}{latitude foliation}
\newcommand{\longfoli}{longitude foliation}
\newcommand{\FoliLati}{\mathcal{F}}
\newcommand{\FoliLogi}{\mathcal{G}}
\begin{definition}\label{def:LL_foli_sphere}
    There is a natural pair of transverse measured foliations on $\STwo$.
    \begin{itemize}
        \item The \DEF{latitude foliation} \SYM{$\mathcal{F}$} consists of all latitude circles of $\STwo$. The measure of a transverse arc is given by the total variation of latitude.
        \item The \DEF{longitude foliation} \SYM{$\mathcal{G}$} consists of all meridian lines of $\STwo$. The measure of a transverse arc is given by the total variation of longitude.
    \end{itemize}
\end{definition}
All leaves are lines with constant $\psi$ or $\phi$ coordinate. Leaves of $\FoliLati$ are the level sets of Morse function $\cos\phi$ on $\STwo$. Notably, the equator is the only geodesic leaf within $\FoliLati$. In comparison, leaves in $\FoliLogi$ are geodesics of length $\pi$, connecting the two poles. The poles $N,S$ are the only singularities in both foliations, with index $+2$.

\bigskip
Consider the \DEF{conformal Mercator map} $Mer:\CC \to \STwo-\{N,S\}$ given by
    \begin{equation}
        Z=\psi+\ii Y \longmapsto \left( \psi, 2\arctan(\mathrm{e}^Y) \right).
    \end{equation}
This map serves as the conformal universal cover of the twice punctured sphere. Under this map, $\FoliLogi$ is exactly the pullback of the vertical measured foliation $\mathrm{d}\psi$, while $\FoliLati$ is the pullback of horizontal foliation with the measure given by $\mathrm{d}Y/\cosh Y$. 
By using the coordinate $z=r\mathrm{e}^{\ii\theta}$ on the Riemann sphere, $Mer:\CC\to\CC^*$ is given by
    \begin{equation}
        Z = \psi+\ii Y \longmapsto \exp(\ii Z) = \mathrm{e}^{-Y}\mathrm{e}^{\ii\psi}.
    \end{equation}
Then $\FoliLogi, \FoliLati$ is given by $\mathrm{d}\theta$ and $2\mathrm{d}r/(1+r^2)$ respectively. 
Hence, the pair of foliations $(\FoliLati, \FoliLogi)$ is topologically equivalent to the ones induced by the differential $\ii \mathrm{d}z / z$ on the Riemann sphere (or simply $\mathrm{d}Z$ on the covering $\CC$) with a modified measure. The purpose of this modification is to record spherical geometric quantities.

\subsection{On dihedral surfaces}\label{sec:ll_foli}
It can be observed that the differentials $\mathrm{d}\theta$ and $2\mathrm{d}r/(1+r^2)$ are fixed by $\Uone$-action, and remain invariant up to signature under $\Otwo$-action. Therefore, as \mfoli s, the latitude foliation $\FoliLati$ and the longitude foliation $\FoliLogi$ are both fixed by the $\Otwo$-action.
Recall that in Section \ref{sec:dihedral}, $\hat{\Sigma}$ is the metric completion of the universal covering space of the punctured surface $\Sigma-P$. A developing map $D:\hat{\Sigma}\to\STwo$ is a $\rho$-equivariant map, and $\pi:\hat{\Sigma}\to\Sigma$ extends the universal covering map.

\begin{definition}\label{def:LL_foli}
Let $\Sigma$ be a dihedral \csphsf. 
\begin{enumerate}
    \item Let $D$ be a developing map such that $\rho(\pi_1)\subset \Otwo$. 
    Then $\SYM{F}:=\pi_* D^*(\FoliLati)$ and $\SYM{G}:=\pi_* D^*(\FoliLogi)$ are well-defined \mfoli s on $\Sigma$.
    $F$ and $G$ are also called \DEF{latitude and longitude foliation} of $\Sigma$ with respect to $D$, regarded as horizontal and vertical foliation.
    \item 
    $\Sigma$ together with a pair of latitude and longitude foliation $(F,G)$ is called a \DEF{foliated (dihedral) surface}, denoted by a triple \SYM{$(\Sigma;F,G)$}.
\end{enumerate}
\end{definition}

It is important to note that there may be different choices of $(F,G)$ on a given $\Sigma$. See Example \ref{eg:three_cone_sphere}, \ref{eg:one_cone_torus}.
The latitude foliation is also defined in \cite[Section 2.2]{GT23}

The function $\abs{\cos\phi}$ is $\Otwo$-invariant on $\STwo$. So for any dihedral surface $\Sigma$ with developing map $D$ as above, $\abs{\cos\phi}$ can induce a function $H:\Sigma\to[0,1]$, called the \DEF{absolute latitude function}. The leaves of latitude foliation $F$ are the level sets of $H$. For co-axial surfaces, we can pull back the $\Uone$-invariant function $\cos\phi$ to obtain a Morse function $h:\Sigma\to[-1,1]$. This is similar to the function $\Phi$ introduced in \cite{CWWX15,WWX22axv} and we have $2h+2=\Phi$.

\subsection{Local information of singularities}\label{sec:singular_info}
Let $(\Sigma;F,G)$ be a foliated dihedral surface induced by a developing map $D$.
Let $N,S$ be the north and south poles, and $\{\phi=\frac\pi2\}$ be the equator of $\STwo$. Both sets $\{N,S\}$ and $\{\phi=\frac\pi2\}$ are $\Otwo$-invariant.
\begin{definition}\label{def:singular_info} ~
\begin{enumerate}
    \item The set $\pi (D^{-1} \{\phi=\frac\pi2\}) \subset \Sigma$  
    is called the \DEF{equatorial net} of the foliated surface or both foliations $F,G$, denoted by \SYM{$\mathcal{E}$}.
    \item Points in $\SYM{Q}:=\pi (D^{-1}\{N,S\}) $ 
    are called the \DEF{poles} of both foliations. 
    This is the set of singularities with index $+2$.
    \item Other singularities are called the \DEF{zeroes} of both foliations, with all indexes $\leq +1$.
\end{enumerate}
\end{definition}

Equivalently, using the absolute latitude function $H$, we have $\mathcal{E}=H^{-1}(0)$, $Q=H^{-1}(1)$. Note that the equatorial net $\mathcal{E}$ is the only geodesic leaves of $F$.


\bigskip
By the geometry of $\Otwo$ actions on $\STwo$, the cone angle depends on the position of a cone point. 
As discussed in Section \ref{sec:dihedral}, the monodromy representation of a simple loop surrounding a cone point $p\in P$ is a rotation. One of the fixed points of this rotation is the developing map image of the point in $\hat{\Sigma} - \tilde{\Sigma}'$ corresponding to the loop. 
When $p$ lies on the equatorial net, the developing map image of the point lies on the equator of $\STwo$.
Since $\Otwo$ action preserves the equator, the representation of the simple loop must either be trivial or a $\pi$-rotation. And consequently, the cone angle of $p$ is an integer multiple of $\pi$. 
When $p$ is a pole of the foliations, the representation of the simple loop can be an arbitrary rotation that fixes the two poles. Finally, if $p$ is not a pole or a point on the equatorial net, the representation of the simple loop must be trivial and the cone angle is an integer multiple of $2\pi$. 
For a co-axial surface, if $p$ is not a pole, its cone angle must be an integer multiple of $2\pi$, since $\pi$-rotation about a point on the equator is not allowed. 
So every cone point must be a singularity of the foliations, but a pole singularity may be a smooth point of the metric. 
Also see \cite[Theorem 1.2]{SCLX20} for a special class of dihedral surfaces and \cite[Theorem 1.5]{CWWX15} for co-axial surfaces.

We summarize the local information of singularities of the foliations in Table \ref{tab:singularity}. Also see \cite[Section 2.2]{GT23}. The poles provide geometric information, while the zeros provide more topological information. We also introduce the figure legend in the final row for illustrating a foliated surface. The zeros are depicted as solid circles, while poles that are not cone points are depicted as hollow circles. If a pole is also a cone point, it is shown with a hollow circle overlapping a solid circle. 
\begin{table}[h] 
\centering
\normalsize
\renewcommand\arraystretch{1.4}
\begin{tabular}{c||c|c|c|c}
    singularity $x$ & $\in P \cap Q$ & $\in P \cap \mathcal{E}$ & $\in P - (\mathcal{E}\cup Q)$ & $\in Q-P$ \\
    \hline
    cone angle & (no restriciton) & $k \pi$ & $ 2k \pi $ & (smooth point) \\
    \hline
    index & $+2$ & $2-k$ & $2-2k$ & $+2$ \\
    \hline
    figure legend 
    & \rlap{$\raisebox{-0.9ex}{\text{\larger[5]\ensuremath{\bullet}}}$} $\raisebox{-0.9ex}{\text{\larger[5]\ensuremath{\circ}}}$ 
    & \multicolumn{2}{c|}{ $\raisebox{-0.9ex}{\text{\larger[5]\ensuremath{\bullet}}}$ } 
    & $\raisebox{-0.9ex}{\text{\larger[5]\ensuremath{\circ}}}$
\end{tabular}
\vspace{4pt}
\caption{Local information of singularities for dihedral surfaces. $P$ is the set of all cone points, $Q$ is the set of all poles, and $\mathcal{E}$ is the equatorial net.
The angle of a cone point in $P\cap Q$ can be expressed by the measure. }
\label{tab:singularity}
\end{table}

\subsection{More about the measure}\label{sec:bt_meas}
The total measure of leaf segments in $F,G$ is most concerned. We give a brief description next, which aligns with the general theory of \mfoli.

An embedded regular leaf segment $\gamma:[0,1]\to\Sigma$ of $G$ 
is a geodesic arc. The \DEF{total measure} of $\gamma$ under $F$, also referred to as the \DEF{intersection number} of $F$ and $\gamma$, is defined as the spherical length of $\gamma$, denoted by $i(F,\gamma)$.

Similarly, consider another embedded regular leaf segment $\delta:[0,1]\to\Sigma$ of $F$. 
Let $\tilde{\delta}$ be a lifting of $\delta$ through the covering $\pi:\hat{\Sigma}\to\Sigma$. 
The total measure of $\delta$ under $G$, or the intersection number $i(G,\delta)$, is defined as the total longitude variation of $D({\tilde{\delta}})$ on $\STwo$. 
Alternatively, we can consider the \DEF{equatorial projection}
    $$ \mathrm{pr}:\STwo-\{N,S\} \longrightarrow \{\phi=\frac\pi2\}\subset\STwo,\quad (\psi,\phi) \mapsto (\psi,\pi/2)=(\cos\psi, \sin\psi, 0). $$
Then $i(G,\delta)$ is equal to the spherical length of the curve $\mathrm{pr}\circ D(\tilde{\delta})$. In particular, when $\delta$ is a closed leaf of $F$ surrounding a pole $q$, $i(G,\delta)$ is equal to the cone angle $2\pi\theta_i$ if $q=p_i\in P$, and equal to $2\pi$ if $q\notin P$. This is analogous to the residue of a quadratic differential at a double pole, multiplied by $2\pi$.

\subsection{Various moduli spaces}\label{sec:moduli_spaces}
Now we introduce several kinds of moduli spaces used for the dimension count. As before, let $\vec{\theta}:=(\theta_1, \cdots, \theta_n) \in \RR_+^N$ be an angle vector with $\theta_i\neq 1$. 
\begin{definition}\label{def:labeled} ~ \begin{enumerate}
    \item A \DEF{labeled surface} is a \csphsf\ whose cone points are labeled as $p_1,\cdots,p_n$ such that $p_i$ has cone angle of $2\pi \theta_i$, for $i=1,\cdots,n$.
    \item Let $\Sigma_k\ (k=1,2)$ be two labeled \csphsf s, with cone points labeled as $p^k_i, i=1,\cdots,n$. They are \DEF{equivalent with label} if there exists an orientation-preserving isometry $I:\Sigma_1 \to \Sigma_2$ such that $I(p^1_i)=p^2_i$ for all $i$.
    \item The \DEF{labeled moduli space} \SYM{$\Msphgn$} is all equivalence classes of genus $g$ labeled surfaces with the given angle vector $\vec{\theta}$.
    \end{enumerate}
\end{definition}
From now on, all \csphsf s with prescribed cone angles will be treated as labeled surfaces.


\medskip
By Section \ref{sec:singular_info}, each cone point of a dihedral surface must be a singularity of any foliation. However, it can be either a zero or a pole. Therefore, we can consider foliated surfaces with prescribed topological type for each cone point. Some of the following notations are inherited from \cite{GT23}.

\begin{definition}
    Let $\vec{\theta}=(\theta_1, \cdots, \theta_n) \in \RR_+^N$ be an angle vector with $\theta_i\neq 1$. A \DEF{(topological) type partition} is a partition $\SYM{\vec{T}}=( E, O, N )$ of $\{1,\cdots,n\}$ satisfying that
\begin{itemize}
    \item $\forall i\in E, \theta_{i}\in \ZZ_{>1}$;
    \item $\forall j\in O, \theta_{j}\in \frac12 + \ZZ$.
\end{itemize}
    Let $ \#E=n_E, \#O=n_O, \#N=n_N $, then $n_E + n_O + n_N = n$.

    A foliated surface $(\Sigma;F,G)$ is \DEF{of type partition $\vec{T}$}, if $p_i$ is a zero with index $(2-2\theta_i)$ for all $i \in E \cup O$, and $p_k$ is a pole for all $k \in N$.
\end{definition}

To achieve a more precise statement of dimension count, we will consider co-axial and strict dihedral surfaces separately.

\begin{definition}\label{def:moduli_D}
    Let $\vec{T}$ be a type partition as above.
    \begin{enumerate}
    \item The moduli space \SYM{$\MsphgnD$} is the set of all equivalent classes of \EMP{strict dihedral} \csphsf s in $\Msphgn$.
    \item \SYM{$\MsphgnDF$} is the set of all foliated strict dihedral surfaces $(\Sigma;F,G)$ with $\Sigma\in\MsphgnD$.
    \item $\SYM{\mathcal{M}sphF_{g,n}^D(\vec{\theta};\vec{T})} \subset \MsphgnDF$ is the set of foliated strict dihedral surfaces of type partition $\vec{T}$.
    \item $\SYM{\mathcal{M}sph_{g,n}^D(\vec{\theta};\vec{T})} :=\{ \Sigma \ \lvert\ (\Sigma;F,G)\in \MsphgnDFT \}$ is the set of all equivalent classes of (unfoliated) strict dihedral surfaces of type partition $\vec{T}$.
\end{enumerate}
\end{definition}

The relations between moduli spaces defined above are summarized in the following diagram. In the diagram, all horizontal arrows are embeddings, while all vertical arrows are onto projections. We will show that the fibers of projections are always finite, and different type partition may lead to different dimension. 
\begin{equation}\label{eq:moduli_spaces_D} \begin{gathered}
    \xymatrix{
    \MsphgnDFT \ar@{^(->}[r]^{\quad\iota_F} \ar@{->>}[d]^{p_T}    & \MsphgnDF \ar@{->>}[d]^{p}    &\\
    \MsphgnDT \ar@{^(->}[r]^{\quad\iota_T}     & \MsphgnD \ar@{^(->}[r]^{\iota}     & \Msphgn
    }
\end{gathered} \end{equation}

The treatment for co-axial surfaces is slightly different. Note that a cone of angle $(2k+1)\pi$ can not be a zero of latitude foliation in this case. So we always assume $O=\emptyset$ in the type partition for co-axial surfaces.

\begin{definition}\label{def:moduli_C} ~
    \begin{enumerate}
        \item \SYM{$\MsphgnC$} is the set of all equivalent classes of co-axial \csphsf s in $\Msphgn$.
        \item \SYM{$\MsphgnCF$} is the set of all foliated co-axial surfaces $(\Sigma;F,G)$ with $\Sigma\in\MsphgnC$ and \EMP{$(F,G)$ orientable \mfoli s}. 
        \item $\SYM{\MsphgnCFT}\subset \MsphgnCF$ is the set of all foliated co-axial surfaces of type partition $\vec{T}$.
        \item $\SYM{\MsphgnCT} :=\{ \Sigma \ \lvert\ (\Sigma;F,G)\in \MsphgnCFT \}$ is the set of all (unfoliated) co-axial surfaces of type partition $\vec{T}$.
    \end{enumerate}
\end{definition}
The main distinction is that we will only consider orientable foliations on co-axial surfaces. Example \ref{eg:three_cone_sphere} shows that a co-axial surface can also be foliated by non-orientable $(F,G)$. This demonstrates the reasonableness of this definition and simplifies problems.
As before, we will study projections in the following diagram. Although the fiber of projections may have positive dimension, this does not affect our dimension count. 

\begin{equation}\label{eq:moduli_spaces_C} \begin{gathered}
    \xymatrix{
    \MsphgnCFT \ar@{^(->}[r]^{\quad\iota_F} \ar@{->>}[d]^{p_T}    & \MsphgnCF \ar@{->>}[d]^{p}    &\\
    \MsphgnCT \ar@{^(->}[r]^{\quad\iota_T}     & \MsphgnC \ar@{^(->}[r]^{\iota}     & \Msphgn
    }
\end{gathered} \end{equation}

\section{Decomposition and deformations of dihedral surfaces}
In this section, we present two types of canonical geometric decompositions of foliated surfaces, which are derived from the study of flat surfaces.
Similar ideas can also be found in previous works \cite{Erm20} and \cite{Li19}. Here we focus more on the \mfoli\ and spherical geometric structure rather than complex analysis aspects.

\subsection{Strip decomposition}\label{sec:strip_decomp}
This is an analogous of vertical strip decomposition for \qdf\ on the unit disk \cite[Section 19]{Strebel}. Also compare this subsection with \cite[Section 4]{WWX22axv}.

We start with the topology of longitude foliation of a foliated surface $(\Sigma;F,G)$, where $\Sigma$ is a dihedral surface of genus $g$ with $n$ cone points.

\newcommand{\SingF}{\mathrm{Crit}(F)}
\newcommand{\SingG}{\mathrm{Crit}(G)}
\begin{definition}\label{def:Sing_G}
    The set of \DEF{critical leaves} of $G$, denoted by \SYM{$\SingG$}, is defined as the union of all leaves of $G$ starting from $P-Q$, which are the zeros of the foliation.
\end{definition}

\begin{lemma}\label{lem:Sing_G}
    For a foliated surface $(\Sigma;F,G)$, all leaves in $G-\SingG$ are geodesic segments of length $\pi$, ending at points in $Q$.
\end{lemma}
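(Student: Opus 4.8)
The plan is to analyze the leaves of $G$ that avoid the singular set $\SingG$. By Definition \ref{def:Sing_G}, a leaf $\ell \in G - \SingG$ is a leaf of the longitude foliation that does not emanate from any cone point in $P - Q$. I want to show two things: (i) $\ell$ has no singularity in its interior, so it is a genuine geodesic arc of the developing map; and (ii) following $\ell$ in both directions, it terminates at points of $Q = D^{-1}\{N,S\}$, and its spherical length is exactly $\pi$. The key geometric input is that under any local branch of the developing map $D$, the foliation $G = D^*(\FoliLogi)$ pulls back the meridian foliation $\FoliLogi$ on $\STwo$, whose leaves are precisely the geodesic arcs of length $\pi$ running from $N$ to $S$.

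**The main argument.** First I would observe that a leaf $\ell$ of $G$ can only hit a singularity of $G$ at a point of $Q$ or at a cone point of $P$: away from $P \cup Q$ the foliation is locally the pullback of the regular foliation $\FoliLogi$ near a non-polar point of $\STwo$, hence has no singularity there, while the singularities of $G$ are exactly the points of $Q$ together with the zeros (which by the local analysis in Subsection \ref{sec:singular_info} and Table \ref{tab:singularity} all lie in $P$). Since $\ell \notin \SingG$, by definition $\ell$ does not start at a point of $P - Q$; I must also rule out $\ell$ passing through a point of $P - Q$ in its interior or at its far end. If $\ell$ entered such a cone point $p \in P - (\mathcal{E} \cup Q)$ or $p \in P \cap \mathcal{E}$, then the portion of $\ell$ from that point onward would be a leaf starting at a point of $P - Q$, hence would lie in $\SingG$; but a leaf and any of its sub-arcs lie in the same foliation, so $\ell$ itself would be in $\SingG$, a contradiction. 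Thus the interior and endpoints of $\ell$ avoid $P - Q$, so the only singularities $\ell$ can meet are points of $Q$. Now pick a local branch of $D$ along $\ell$; since $\ell$ avoids $P - Q$ and is a leaf of $D^*(\FoliLogi)$, its image $D(\ell)$ is contained in a leaf of $\FoliLogi$, i.e. in a meridian. Running along $\ell$ away from an interior point, $D(\ell)$ traverses the meridian and must approach $N$ or $S$ in finite length (at most $\pi$), because the meridian has length $\pi$ and $D$ is a local isometry with no obstruction until a pole-preimage is reached. The leaf cannot stop before reaching a pole (there is no singularity of $G$ strictly between consecutive pole-crossings other than points of $P - Q$, already excluded), so it terminates exactly at a point of $Q$ on each side, and its total spherical length is the full meridian length $\pi$.

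**Anticipated obstacle.** The subtle point is the termination claim: a priori a leaf of a measured foliation could be a non-closed, recurrent, or infinitely long leaf, and one must be certain that $D(\ell)$ really closes up into a single length-$\pi$ meridian segment rather than, say, winding around or approaching a pole only asymptotically. The resolution uses that $D$ is a local isometry to $\STwo$ away from cone points, so $\ell$ inherits a well-defined arclength, and its image is constrained to lie in a single meridian of $\STwo$; as one proceeds along $\ell$ the image moves monotonically in the $\phi$-coordinate (the latitude), so it reaches $\phi = 0$ or $\phi = \pi$ after length exactly $\pi$, at which point the developing map sends the endpoint to $N$ or $S$, i.e.\ the endpoint lies in $Q$. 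I would also need to double-check the degenerate possibility that $\ell$ reaches a cone point in $P \cap Q$ — but such a point is in $Q$, so it is an allowed endpoint and causes no problem. Finally, one should note the leaf cannot be a closed loop missing $Q$ entirely, since its $D$-image would then be a closed loop inside a meridian, impossible as a meridian is an arc, not a circle; hence every leaf in $G - \SingG$ genuinely ends at $Q$ on both ends with length $\pi$, as claimed.
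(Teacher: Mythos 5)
Your proof is correct and follows essentially the same route as the paper's: identify that a leaf in $G - \SingG$ avoids all zeros of $G$, note that the developing map is a local isometry along such a leaf with image in a meridian of $\FoliLogi$, and conclude that the leaf must reach a preimage of $N$ or $S$ after exactly length $\pi$. The paper's version is much terser (a few sentences stating that such a leaf develops isometrically onto a leaf of $\FoliLogi$ and hence has length $\pi$, also noting non-recurrence), while you spell out the ruling-out of $P-Q$ singularities, the monotonicity in latitude, and the impossibility of closed or recurrent leaves — all valid elaborations of the same argument.
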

\begin{proof}
    All leaves of $\FoliLogi$ on $\STwo$ are geodesics of length $\pi$, and the developing map $D$ is locally isometric. So every leaf of $G$ on $\Sigma$ must meet a pole within length $\pi$, or stop at a zero before meeting a pole. If a leaf never hits a zero, the developing map along any lifting of this leaf is an isometric immersion. So it is isometric to a leaf of $\FoliLogi$. This also shows that there is no recurrent leaf in $G$.
\end{proof}

There are only finitely many cone points, so the majority of leaves of $G$ terminate at poles in both directions. The leaves that end at zeros of 
$G$ indicate the genuine singularity of the foliation.

\begin{lemma}
    Whenever $2g-2+n>0$, $\SingG$ is non-empty.
\end{lemma}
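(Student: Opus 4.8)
The plan is to argue by contradiction: suppose $\SingG = \emptyset$. By Definition \ref{def:Sing_G} this means that no leaf of $G$ emanates from a point of $P-Q$; equivalently, every cone point of $\Sigma$ that is a singularity of $G$ already lies in $Q$, so that every singularity of $G$ outside $Q$ has index making it a genuine zero — but if $\SingG=\emptyset$ there are in fact no zeros of $G$ at all, and all singularities of $G$ are poles (points of $Q$), each of index $+2$. Combining this with Lemma \ref{lem:Sing_G}, which says that in the absence of singular leaves every leaf of $G$ is a geodesic segment of length $\pi$ running between points of $Q$, we get a very rigid global picture: $\Sigma$ is swept out by a family of such arcs, and $G$ is a foliation whose only singularities are the poles, all of index $+2$.

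The key step is then to apply the Poincaré–Hopf index formula (Theorem \ref{thm:PH}) to the foliation $G$ on the closed surface $\Sigma$ of genus $g$:
\begin{equation*}
    \sum_{\text{singularity } p} \mathrm{Ind}(p) = 2(2-2g).
\end{equation*}
Under the assumption $\SingG=\emptyset$, the left-hand side equals $2\,\#Q$, since every singularity of $G$ is a pole of index $+2$ and $\#Q\geqslant 1$ (the foliation must have at least one singularity, as $\chi(\Sigma)=2-2g$ and a nonempty closed surface cannot carry a nonsingular line field unless $g=1$, and even then the poles arising from the developing map are forced — I would phrase this more carefully, noting $Q=D^{-1}\{N,S\}$ is always nonempty since $D$ is surjective onto an open dense subset and the poles of $\FoliLogi$ are hit). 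Hence $2\,\#Q = 2(2-2g)$, i.e. $\#Q = 2-2g$, which already forces $g=0$ and $\#Q\le 2$.

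To finish, I would rule out the surviving case $g=0$, $\#Q\in\{1,2\}$, $P\subseteq Q$. Since $P\subseteq Q$ forces every cone point to be a pole, and combined with $g=0$ and $n = \#P \leqslant \#Q \leqslant 2$ we get $2g-2+n = n-2 \leqslant 0$, contradicting the hypothesis $2g-2+n>0$. (If one worries about the subcase $\#Q=2$ but $P\subsetneq Q$, i.e. $n\le 1$, the same inequality $n-2\le -1<0$ applies; and $\#Q=1$, $g=0$ is impossible for $G$ a foliation coming from $\FoliLogi$ since preimages of the two distinct poles $N,S$ are both nonempty, so $\#Q\ge 2$ — this tightens the contradiction but is not strictly needed.) Therefore the assumption $\SingG=\emptyset$ is untenable, proving the lemma.

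The main obstacle I anticipate is bookkeeping the index contributions precisely: I must be sure that when $\SingG=\emptyset$ the foliation $G$ genuinely has \emph{no} singularities other than the poles in $Q$ — in particular that a cone point of $P\cap Q$ contributes exactly $+2$ and not something else, and that there are no "hidden" $2$-prong (regular) points that the Poincaré–Hopf sum would miss (the excerpt notes a $2$-prong is regular, so it contributes $0$ and is harmless). The cleanest route is to invoke Table \ref{tab:singularity}: with $\SingG=\emptyset$ forcing every cone point into $Q$, every singularity of $G$ sits in the first or last column of the (analogue for $G$ of the) table, hence has index $+2$, and the index sum is exactly $2\#Q$. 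Once that is pinned down, the arithmetic $\#Q = 2-2g$ together with $P\subseteq Q$ and $2g-2+n>0$ closes the argument immediately.
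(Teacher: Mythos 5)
Your proof is correct and uses the same ingredients as the paper's: Table \ref{tab:singularity} to classify the indices, and the Poincar\'e--Hopf formula to derive a contradiction. The paper splits into cases $g>0$ and $g=0$ to locate a non-pole singularity directly, whereas you observe more uniformly that $\SingG=\emptyset$ forces $P\subseteq Q$, hence $n\leqslant\#Q=2-2g$, i.e.\ $2g-2+n\leqslant0$; this one inequality replaces the case split cleanly, so the premature claim that $\#Q=2-2g$ ``forces $g=0$'' (it only forces $g\leqslant1$) and the loosely-argued aside about $D$ being surjective so that $\#Q\geqslant1$ are both unnecessary and can simply be dropped.
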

\begin{proof}
    Based on to Table \ref{tab:singularity} and \PH\ formula (\ref{eq:PH}), for $g>0$, there must exist a singularity of negative index. When $g=0$, with $n\geq3$ and $4-4g=4$, it is not feasible for all singularities to have an index of $+2$ simultaneously. Therefore, there must be at least one zero and $P-Q$ non-empty.
\end{proof}

The cases where $2g-2+n\leq0$ are discussed in Section \ref{sec:2cone_sphere}. Unless stated otherwise, we will assume that $2g-2+n>0$ by default.

By Lemma \ref{lem:Sing_G}, each connected component of $\Sigma-\SingG$ is foliated by geodesic leaves of length $\pi$, which form a bigon. Moreover, all the singularities $P\cup Q$ are located on the boundaries of these bigons.

\begin{definition}\label{def:strip_decomp}
    Given a foliated surface $(\Sigma;F,G)$, the critical leaves $\SingG$ partition the surface into a finite number of bigons. This partition is called the \DEF{strip decomposition} of $(\Sigma;F,G)$.
\end{definition}

The original surface $\Sigma$ can be reconstructed by gluing several \EMP{closed} bigons along their boundaries. 
All the vertices of the bigons are glued to the poles of the foliation. Conversely, all points in $Q$ are fromed by the vertices of the bigons. Otherwise if all leaves from some $q\in Q$ do not meet any zero, they must be geodesics of length $\pi$ and terminate at the same pole. These leaves already form a closed football.

A boundary point of a bigon, other than the two vertices, that is glued to a cone point is considered as a \DEF{marked point} of the closed bigon before gluing. 
A cone point in $P-Q$ of angle $s\pi\ (s\in\ZZ_+)$ is glued from $s$ marked points. 
The same convention applies to the annulus decomposition later. 
The \DEF{angle} or the \DEF{width} $w_i$ of a bigon $B_i=B_{w_i}$ (see Section \ref{sec:spherical_cone}) in the strip decomposition equals to intersection number $i(G,\alpha)$, where $\alpha$ is a leaf segment of $F$ inside $B_i$ connecting its two boundaries. This quantity is also denoted as $i(G, B_i)$.

For co-axial surfaces, the strip decomposition is equivalent to the football decomposition in \cite[Theorem 1.3]{WWX22axv}.

\bigskip
The number of bigons, the division of the boundaries of the bigons, and the pairing of boundary segments are called the \DEF{gluing pattern} of the strip decomposition. This is a purely combinatorial data.
The strip decomposition of a foliated surface $(\Sigma;F,G)$ is unique. So the width of the bigons, the length of boundary segments, 
and the gluing pattern completely parameterize all foliated surfaces. However, the choice of $(F,G)$ on a given $\Sigma$ may not be unique, as illustrated in Example \ref{eg:three_cone_sphere} below. Despite this, these parameters are sufficient to distinguish between different dihedral surfaces.

\begin{proposition}
    The strip decomposition provides a complete determination for a dihedral surface. Specifically, two dihedral surfaces $\Sigma_1, \Sigma_2$ are isometric if and only if there exist corresponding foliated surfaces $(\Sigma_i;F_i,G_i), i=1,2$, such that their strip decompositions are identical. This means that they have the same gluing pattern, bigon widths and position of boundary marked points.

    In other words, the equivalence holds if and only if there exists a \homeo\ $I:\Sigma_1\to\Sigma_2$ such that $I^*(F_2)=F_1, I^*(G_2)=G_1$ as \mfoli s.
    \hfill $\square$
\end{proposition}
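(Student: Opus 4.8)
The plan is to prove both directions of the equivalence by tracking how the geometric data of a strip decomposition reconstructs the dihedral surface together with its foliations, and conversely.

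\textbf{Forward direction (same strip decomposition $\Rightarrow$ isometric).} Suppose $(\Sigma_i;F_i,G_i)$, $i=1,2$, have strip decompositions with the same gluing pattern, the same bigon widths $w_1,\dots,w_m$, and the same boundary segment lengths. Each closed bigon $B_{w_j}$ is, by construction in Section~\ref{sec:spherical_cone}, a metric object determined up to isometry by its width $w_j$; moreover the positions of the marked points on its boundary are determined by the prescribed segment lengths (measured in the spherical metric along the length-$\pi$ boundary geodesics). Since the gluing pattern is purely combinatorial data matching boundary segments of equal length, the identity map on each $B_{w_j}$ induces a well-defined isometry $I:\Sigma_1\to\Sigma_2$. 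Because each $B_{w_j}$ carries its canonical foliation by length-$\pi$ geodesic leaves (the restriction of $G_i$) together with the transverse latitude foliation (the restriction of $F_i$), and these are glued consistently by the length data, $I$ carries $G_1$ to $G_2$ and $F_1$ to $F_2$ as measured foliations. The developing map and $\Otwo$-monodromy are reconstructed by developing one bigon and extending across the gluings, so $I$ is genuinely an isometry of dihedral surfaces, and it sends cone points to cone points preserving angles, hence respects the labeling. This gives one direction and simultaneously proves the ``equivalently'' reformulation, since such an $I$ is exactly a homeomorphism with $I^*(F_2)=F_1$, $I^*(G_2)=G_1$.

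\textbf{Reverse direction (isometric $\Rightarrow$ a common strip decomposition).} Here the key observation is that the strip decomposition of a foliated surface is \emph{canonically} attached to the pair $(F,G)$: by Definition~\ref{def:strip_decomp} it is obtained by cutting along $\SingG$, which is intrinsically defined from $G$ and the cone point set. So if $\Sigma_1\cong\Sigma_2$ via an isometry carrying $(F_1,G_1)$ to $(F_2,G_2)$, that isometry carries $\SingG[G_1]$ to $\SingG[G_2]$, hence carries the strip decomposition of $(\Sigma_1;F_1,G_1)$ to that of $(\Sigma_2;F_2,G_2)$ bijectively on bigons, preserving widths (which are intersection numbers $i(G,\cdot)$, an isometry invariant of the foliated surface) and boundary segment lengths (spherical lengths, preserved by isometry). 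Thus the two foliated surfaces literally have the same strip decomposition. The subtlety to address is that a dihedral surface $\Sigma$ may admit several choices of $(F,G)$; the proposition only claims the \emph{existence} of a matching pair, so one picks any $(F_1,G_1)$ on $\Sigma_1$ and transports it via the given isometry to obtain the required $(F_2,G_2)$ on $\Sigma_2$.

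\textbf{Main obstacle.} The routine-looking but genuinely load-bearing step is the forward direction's claim that the gluing data (pattern + widths + segment lengths) suffices to reconstruct \emph{everything}, including the dihedral/spherical structure and not merely a topological surface with two foliations. One must verify that gluing closed bigons along length-matched boundary geodesics always yields a smooth spherical metric away from $P\cup Q$ (automatic, since each bigon is locally isometric to $\STwo$ and the boundary leaves are geodesics, so reflection-type gluing is smooth), that the cone angles at the glued points come out as prescribed (each marked point contributes $\pi$, each vertex gathers the bigon angles to a total latitude-arbitrary angle — this is exactly the bookkeeping summarized in Table~\ref{tab:singularity}), and that the resulting monodromy still lies in $\Otwo$ rather than a larger subgroup of $\PSUtwo$. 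The last point is where care is needed: one argues that each bigon develops into a standard spherical bigon $B_{w_j}$ whose symmetry fixing the two poles is exactly $\Otwo$-type, and the gluing isometries, matching latitude along boundary geodesics, are themselves elements preserving the pole pair; composing around any loop therefore stays in $\Otwo$. Everything else — that the map is a bijection, that lengths and widths are isometry invariants — is bookkeeping already implicit in Section~\ref{sec:strip_decomp}.

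\hfill$\square$
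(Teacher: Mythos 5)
The paper gives no proof of this proposition --- the $\square$ at the end of the statement indicates it is treated as immediate from the construction of the strip decomposition. Your proof correctly supplies both directions and, in essence, makes explicit the argument the authors left implicit: matching strip data (gluing pattern, widths, segment lengths) produces a piecewise isometry on closed bigons that assembles to a global isometry $I:\Sigma_1\to\Sigma_2$ carrying one foliation pair to the other; and conversely, an isometry lets you push a chosen latitude/longitude pair $(F_1,G_1)$ forward to obtain $(F_2,G_2)$ (coming from the composed developing map $D_1\circ I^{-1}$) with identical strip decomposition, since $\mathrm{Sing}(G)$, widths, and segment lengths are all intrinsic to $(F,G)$ and preserved by isometry. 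One small remark: your ``main obstacle'' paragraph, which verifies that gluing the bigon data reconstructs a smooth spherical metric with $\Otwo$ monodromy, is more than is strictly needed for the proposition as phrased, because both $\Sigma_1$ and $\Sigma_2$ are \emph{already given} as dihedral surfaces --- the spherical structure and $\Otimes$-monodromy on the target are hypotheses, not conclusions to be re-derived. What the forward direction actually needs is only that the bigon-by-bigon identifications agree on overlaps, which they do since the gluing pattern and segment lengths match. That said, the reconstruction observation is itself correct and is the right thing to have in mind when reading the first clause ``the strip decomposition completely determines a dihedral surface'' as a genuine parametrization statement, which is exactly how the authors use it in the subsequent dimension counts; so your elaboration is a helpful addition rather than a detour. (One further small point: you write that $I$ ``sends cone points to cone points preserving angles, hence respects the labeling''; strictly, equality of angles does not force label-preservation when angles repeat --- label-preservation follows instead because the gluing pattern in a strip decomposition of labeled surfaces records which marked points belong to which $p_i$, a detail the paper handles by the remark immediately following the proposition.)
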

For labeled surfaces, we require the \homeo\ $I$ to preserve the labels. The gluing pattern is a discrete combinatorial data. Therefore, counting the dimension of the moduli space is equivalent to counting the independent continuous parameters of the strip decomposition.

\begin{example}\label{eg:three_cone_sphere}
    The strip decomposition in Figure \ref{fig:exp_strip_1}, consisting of three bigons of width $\pi$ (equivalent to hemispheres), gives a surface in $\MsphC{0}{3}{\frac12,\frac12,3}$.
    The foliated surface belongs to $\MsphCFT{0}{3}{(\frac12,\frac12,3)}{\vec{T}}$ where $\vec{T}=( \{p_3\}, \emptyset, \{p_1, p_2\} )$. This foliation contains two extra poles $q_1, q_2$.
    \begin{figure}[!h]
      \centering
      \includegraphics[width=0.9\textwidth]{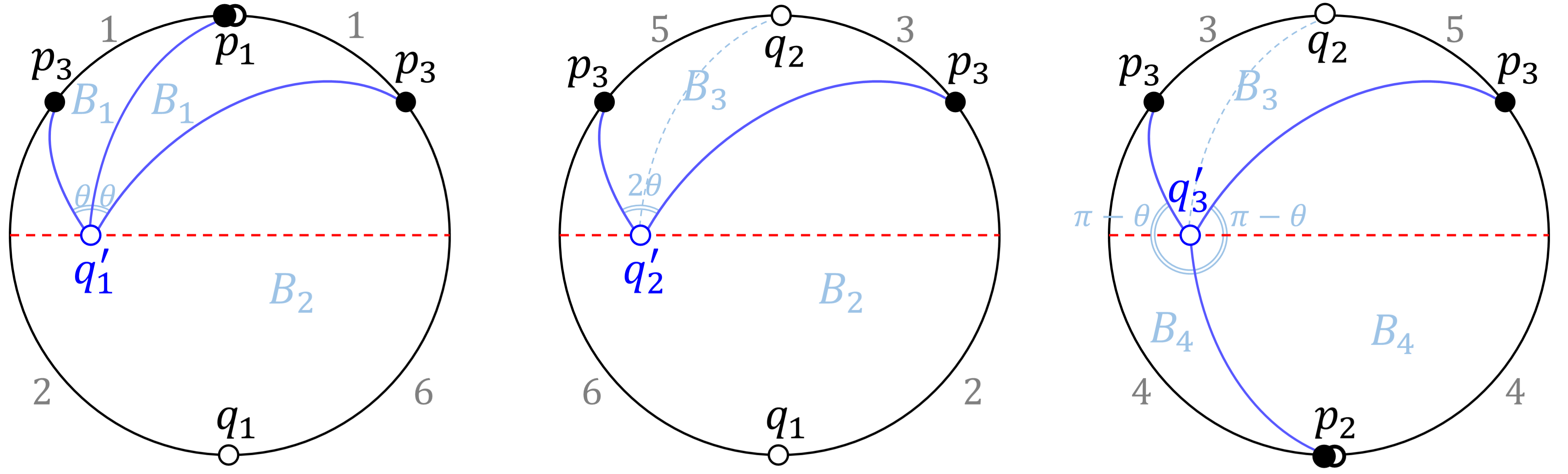}
      \caption{A strip decomposition of a surface in $\MsphC{0}{3}{\frac12,\frac12,3}$ into three hemispheres. The boundary segments labeled with the same number are glued in pair. The lengths of boundary segments of label $1,3,5$ are the same.} 
      \label{fig:exp_strip_1}
    \end{figure}
    
    However, there exits another continuous family of strip decomposition induced by non-orientable foliations. The new poles $q'_1,q'_2,q'_3$ lie on the equatorial net of the previous foliation and are smooth points of the metric. Their positions on the hemispheres are exactly the same. The type partition $\vec{T}'= ( \{p_3\}, \{p_1,p_2\}, \emptyset ) $. The bigons have width of $\theta, 2\pi-2\theta, 2\theta, \pi-\theta$ where $0<\theta<\pi$. 
    \begin{figure}[ht]
      \centering
      \includegraphics[width=0.9\textwidth]{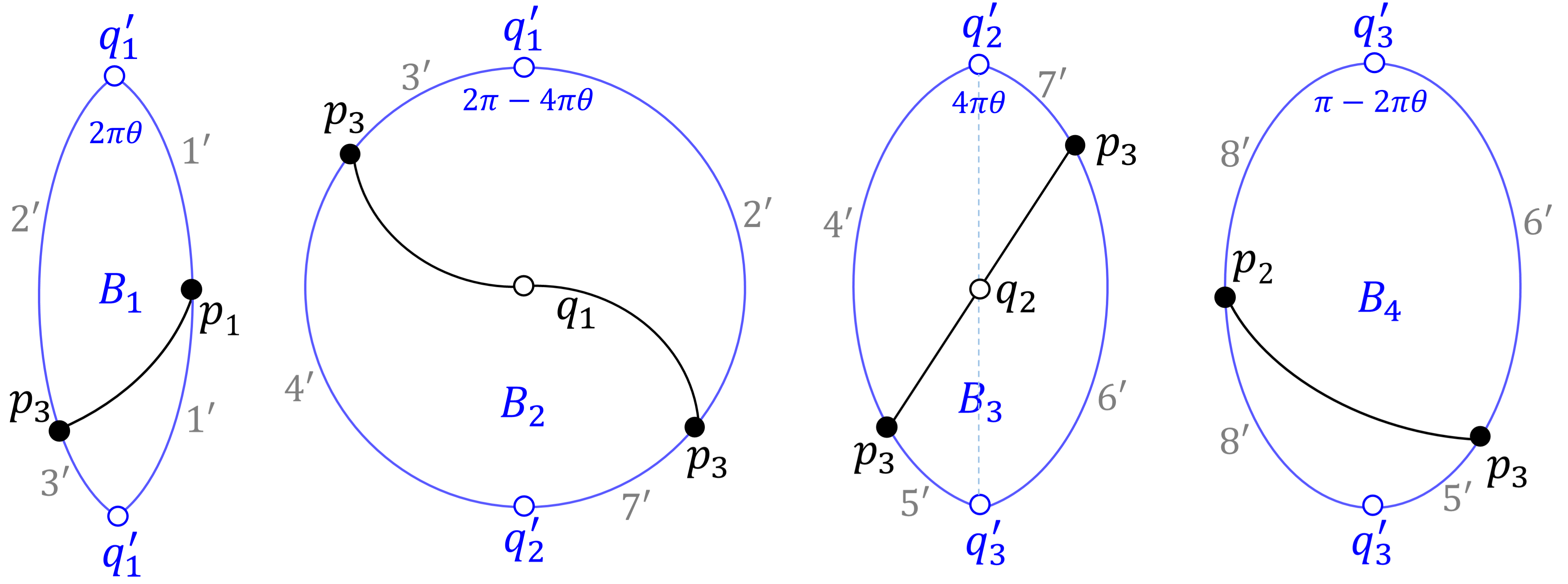}
      \caption{A strip decomposition of the same surface in Figure \ref{fig:exp_strip_1} into four bigons. Two marked points $p_1\in\pt B_1, p_2\in\pt B_4$ are the midpoints. And the lengths of boundary segments of label $3',5',7'$ are the same.}
      \label{fig:exp_strip_2}
    \end{figure}

    In both figures, the critical leaves of the first and second longitude foliation are represented by the segments in black and blue respectively. The representation of singularities follows the figure legend in Table \ref{tab:singularity}. The monodromy groups of the two foliated surfaces are $\rho_1(\pi_1)=\{\mathrm{id}, z\mapsto -z\}$ and $\rho_2(\pi_1)=\{\mathrm{id}, z\mapsto 1/z\}$.
    \hfill $\square$
\end{example}
\textbf{Remark.} This example highlights three important facts:

\textbf{(1).} The orientation requirement in Definition \ref{def:moduli_C}.(2) is necessary. Without such condition, a foliated co-axial surface could potentially be misclassified as a strict dihedral one. Furthermore, the fiber of $p$ in (\ref{eq:moduli_spaces_C}) over such a surface might contain a component with misleading extra dimension. 

\textbf{(2).} The existence of a primitive \qdf\ does not guarantee a strict dihedral metric. It only indicates that the current developing map induces a primitive differential, but this may not hold for all possible developing maps. The monodromy group is the truly essential quantity. 

\textbf{(3).} On the same Riemann surface, it is possible to have both an abelian and a primitive \qdf\ with the same set of zeros and all real periods. However, the poles of these differentials may be distinct. 

\subsection{Annulus decomposition}
There is another decomposition induced by $F$, perpendicular to the previous one. It is exactly the same as \emph{ring domain decomposition} in \cite{Li19} and foliated cylinder decomposition in \cite{Erm20}.

\begin{definition}
    The set of critical leaves of $F$, denoted by $\SingF$, is defined as the union of all points in $Q$ and leaves of $F$ starting from $P-Q$.
\end{definition}

\begin{lemma}
    All leaves in $F-\SingF$ are closed regular leaves. The leaves in the same connected component of $F-\SingF$ are all homotopic. 
\end{lemma}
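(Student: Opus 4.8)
The plan is to mirror the proof of Lemma \ref{lem:Sing_G}, using the fact that the developing map is a local isometry and that the leaves of $\FoliLati$ on $\STwo$ are the latitude circles, which away from the poles $N,S$ are closed curves. First I would take a regular leaf $\ell$ of $F$, i.e. one in $F-\SingF$, and pick a point $x\in\ell$ together with a local branch of the developing map $D$ near $x$. Since $x\notin\SingF$, in particular $x\notin Q$ and $\ell$ does not meet any point of $P-Q$, the leaf $\ell$ develops into a leaf of $\FoliLati$ on $\STwo$; because $\ell$ avoids $Q=D^{-1}\{N,S\}$ entirely, $D(x)$ lies on a genuine latitude circle $\{\phi=\phi_0\}$ with $\phi_0\in(0,\pi)$, which is a compact embedded circle. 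Following $\ell$ in both directions, local isometry forces $D$ restricted to $\ell$ to be a (locally isometric) immersion into this fixed latitude circle; since $\ell$ can never terminate — it would have to hit a singularity, but singularities of $F$ are contained in $P\cup Q\subset \SingF$ by the definition of $\SingF$ and the fact that every singularity lies in $P\cup Q$ — the leaf $\ell$ is a complete leaf, and by the usual argument (no recurrence, finite total latitude-length bounded by the circumference $2\pi\sin\phi_0$) it closes up. Hence $\ell$ is a closed regular leaf.

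Next I would address the homotopy claim. Fix a connected component $C$ of $F-\SingF$. By the first part, $C$ is a union of disjoint closed regular leaves, each developing into the same type of latitude circle; I want to show $C$ is swept out by a continuous family of such leaves, i.e. $C$ is (the interior of) an embedded annulus foliated by the leaves, in which case all the leaves are freely homotopic to each other and to the core. The argument is the standard one for measured foliations: near any regular leaf $\ell\subset C$, the flow-box / transverse-arc structure of the foliation gives a neighborhood isometric (or at least homeomorphic, respecting the foliation) to $\ell\times(-\varepsilon,\varepsilon)$ with leaves $\ell\times\{t\}$; since each nearby leaf is again closed (by part one), this local product structure propagates, and the set of leaves forms a one-parameter family. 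Because $C$ is connected, this family is an interval's worth of mutually parallel closed leaves, so $C$ is an open annulus and any two of its leaves cobound a subannulus, hence are homotopic in $\Sigma$.

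The main obstacle I expect is making the "propagation of the local product structure" rigorous without accidentally running into the singular set: a priori a sequence of regular closed leaves could limit onto a leaf that hits a zero in $P-Q$ or a pole in $Q$, which is precisely why such boundary leaves are thrown into $\SingF$. So the key point to check carefully is that the supremum (and infimum) of the parameter range of a maximal family of parallel closed leaves in $C$ is attained only at leaves meeting $\SingF$, so that $C$ itself contains no such limit leaf — this is where one invokes that $\SingF$ is closed and that $C$ is by definition a component of its complement. Once that is in place, the identification of $C$ with a standard foliated annulus (a subcylinder of the Mercator picture, or of a $\FoliLati$-annulus between two latitudes) and the homotopy statement are immediate. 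I would also remark, as in the proof of Lemma \ref{lem:Sing_G}, that this simultaneously rules out recurrent leaves of $F$, since a recurrent leaf would be neither closed nor contained in $\SingF$.
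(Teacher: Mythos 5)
Your overall route (develop a regular leaf into a latitude circle, then argue it closes up, then propagate the flow-box structure) is in the right spirit, but the central step — that a leaf of $F$ avoiding singularities is a \emph{closed} curve — is not established by what you wrote, and the specific justification you give is actually false.

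The problem is in the clause ``no recurrence, finite total latitude-length bounded by the circumference $2\pi\sin\phi_0$.'' The developing map along a leaf $\ell\subset H^{-1}(c)$ is a local isometry onto the latitude circle, but it is not injective: $\ell$ wraps around that circle an unknown number of times, so its spherical arc length is \emph{not} bounded by $2\pi\sin\phi_0$. Concretely, a regular closed leaf in an annulus $A_j$ of the annulus decomposition has arc length $c_j\sin\phi_0 = i(G,A_j)\sin\phi_0$, and the circumference $c_j$ exceeds $2\pi$ whenever the annulus winds around a pole of cone angle $>2\pi$. Worse, ``no recurrence'' is exactly the thing you need to prove, not an input you get for free, and the analogy with Lemma~\ref{lem:Sing_G} breaks down precisely here: leaves of $\FoliLogi$ are arcs of bounded length $\pi$, so a leaf of $G$ is forced to hit a pole or a zero in finite time, whereas leaves of $\FoliLati$ are closed circles and the developed leaf can in principle wind forever.

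What actually forces closure is the observation that the latitude function $H$ is a \emph{conserved quantity} along leaves, combined with compactness. This is the route the paper takes: consider the (pullback of the) Morse function $H^2$, whose critical points are exactly the singularities of $F$. Each level set $H^{-1}(c)$ with $c$ regular is a compact $1$-dimensional submanifold of $\Sigma$, hence a finite disjoint union of circles, and a regular leaf is a component of such a level set. Equivalently, cover the compact surface by finitely many flow boxes with transverse coordinate $H$; since $\ell$ sits at a fixed value $H=c$, it meets each box in a single plaque, so $\ell$ has finite length and must close up rather than accumulate. This conserved-quantity/compactness input is what your argument is missing, and once it is inserted the first statement holds.

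For the second statement your flow-box propagation is essentially the paper's argument rephrased: the paper observes that the restriction of $H^2$ to a component $A$ of $F-\SingF$ exhibits $A$ as a circle bundle over an interval (gluing two such annuli along the equatorial circle if $0$ lies in the image), hence an open annulus, and any two core circles in an annulus are homotopic. Your worry about limit leaves landing in $\SingF$ is the right thing to watch, and indeed it is handled exactly by the fact that $\SingF$ is closed so its complement is open. So: the second half is fine modulo the first, but the first half as written has a genuine gap and an incorrect bound; replace the arc-length bound with the level-set compactness argument.
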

\begin{proof}
    We can regard the leaves of $F$ as the level set of the Morse function $H^2:\Sigma\to[0,1]$, where $H$ is the absolute latitude foliation defined in Section \ref{sec:ll_foli}. The singularities of $F$ are the critical points of function $H^2$. Then every level set is a closed compact subset of $\Sigma$. Each connected component is a codimension 1 submanifold when it does not contain a critical point. By compactness, it must be a circle.

    For a connected component $A$ of $F-\SingF$, the restriction of $H^2$ on $A$ is a Morse function with circle fibers over its image. If $0$ is not in the image, then $A$ is a circle bundle over an interval, hence an annulus. If $0$ is in the image, then the preimage circle of $0$ cut $A$ into two annulus in the previous. Since there is no singularity, $A$ is glued from two annuli along one-side boundary, hence an annulus again.

    This result can also be proved by the strip decomposition. Each leave from a zero of $F$ must end at a zero with the same absolute latitude, because the whole level set is compact. By drawing all these leaves in $F$, each bigon is further divided into rectangles bounded by a pair of geodesic leaf segments in $G$ and a pair of leaf segments in $F$ (the later one may degenerate to a pole point). All remained leaves of $F$ are regular and closed. So gluing these pieces along the segments of $G$, one must obtain punctured disks and annuli.
\end{proof}

\begin{definition}\label{def:ann_decomp}
    Given a foliated surface $(\Sigma;F,G)$, the critical leaves $\SingF$ partition the surface into a finite number of annuli and punctured disks. This partition is called the \DEF{annulus decomposition} of $(\Sigma;F,G)$.
\end{definition}

As before, $\Sigma$ is obtained by gluing the closed annuli and disks along their boundaries. For a component $A_j$ of the annulus decomposition, the total latitude variation $h_j$ is defined as its \DEF{height}. It equals to intersection number $i(F,\beta)$, where $\beta$ is any leaf segment of $G$ inside $B_i$ connecting its two boundaries. This quantity is also denoted as $i(F, A_j)$.
The total longitude variation $c_j$ of $A_j$ is defined as its \DEF{circumference}. It equals to intersection number $i(F,\gamma)$, where $\gamma$ is any closed regular leaf of $F$ inside $A_j$. This quantity is also denoted as $i(G, A_j)$.

Figure \ref{fig:decomp} depicts the strip and annulus decomposition of a genus zero foliated surface with 6 cone points. $p_1, p_2$ are the zeros of the foliation, with cone angle $4\pi$. The remaining 4 points $q_1, q_2, q_3, q_4$ are the poles of the foliation. Their cone angles are determined by the widths of the bigons in strip decomposition.
\begin{figure}[htb]
  \centering
  \includegraphics[width=0.9\textwidth]{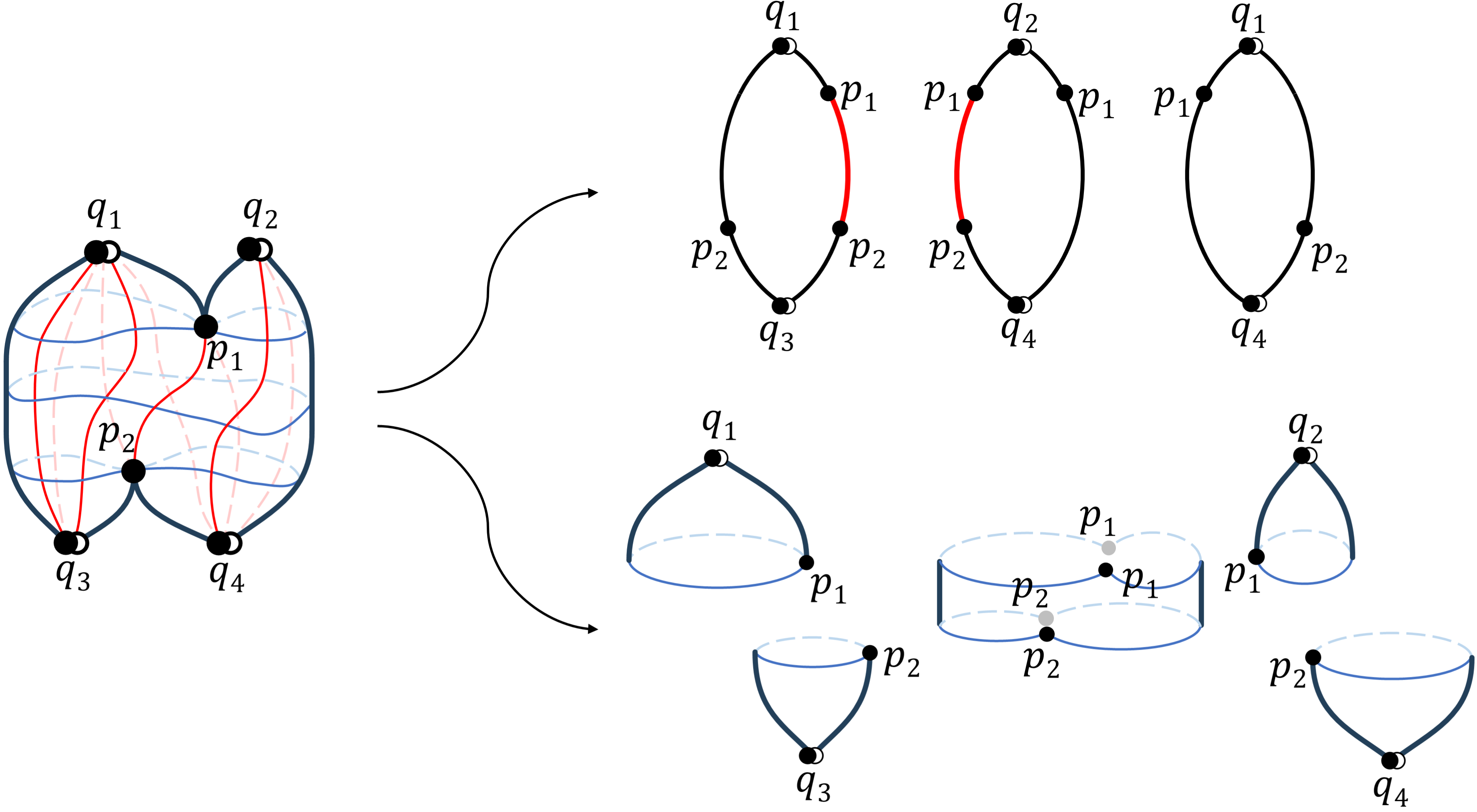}
  \caption{The strip decomposition (right top row) and the annulus decomposition (right bottom row) of a foliated dihedral surface.}
  \label{fig:decomp}
\end{figure}

\subsection{Sliding deformation}
By utilizing latitude and longitude foliations, we are able to define classes of geometric deformations for dihedral surfaces. 

\begin{definition}
    Let $(\Sigma_1; F_1, G_1)$, $(\Sigma_2; F_2, G_2)$ be two foliated dihedral surfaces with the same topological type and angle vector. They are said to \DEF{be differed by a sliding deformation} if there exists a homeomorphism $J:\Sigma_1\to\Sigma_2$ preserving the label, such that $G_1=J^*G_2$ as \mfoli s on topological surfaces.

    In other words, a \DEF{sliding deformation} of a foliated dihedral surface is changing the length of boundary segments while keeping the gluing pattern and widths of the bigons in strip decomposition.
\end{definition}

The effect of sliding is changing the position of the zeros of $G$. Note that cones of angle $(2k-1)\pi$ must lie on the equatorial net, so sliding can only change the position of cone points of angle $2k\pi$, where $k\in\ZZ_{>1}$.
The angle vector is kept during sliding, and the underlying Riemann surface is usually changed.

\bigskip
This deformation is described geometrically and can be applied locally. It also includes a classes of global deformation discovered before. 
It is observed in \cite{CWWX15} that an abelian differential $\omega$ 
induces a 1-parameter family of co-axial metrics, expressed by the developing map
$$ D_\lambda(z) := \lambda \cdot \exp\left( \int_{z_0}^z \omega \right) := \lambda \cdot D_1(z),\quad \lambda\in\RR_+ $$
as a multi-valued meromorphic function on $\Sigma-P$. We call changing the positive parameter the \DEF{$\lambda$-deformation} of a co-axial surface. Note that the underlying Riemann surface and the angle vector are fixed and $D_\lambda(z)$ is always conformal.

The $\lambda$-deformation changes the radial distance from the origin while keep the argument. Thus the longitude foliation is kept during the deformation, and so does the gluing pattern of strip decomposition. 
If $\abs{D_1(z)}=r(z)$, then the spherical distance from $D_1(z)$ to $0$ is $\int_0^{r(z)}\frac{2\mathrm{d}w}{1+w^2}=2\arctan r(z)$. Thus the spherical distance from $D_\lambda(z)$ to $0$ is $2\arctan\abs{D_\lambda(z)}=2\arctan(\lambda\cdot r(z))$. It is just a reassignment of the length of boundary segments of all bigons, thus a sliding deformation. See Figure \ref{fig:sliding_lambda}. 

\begin{figure}[ht]
  \centering
  \includegraphics[width=\textwidth]{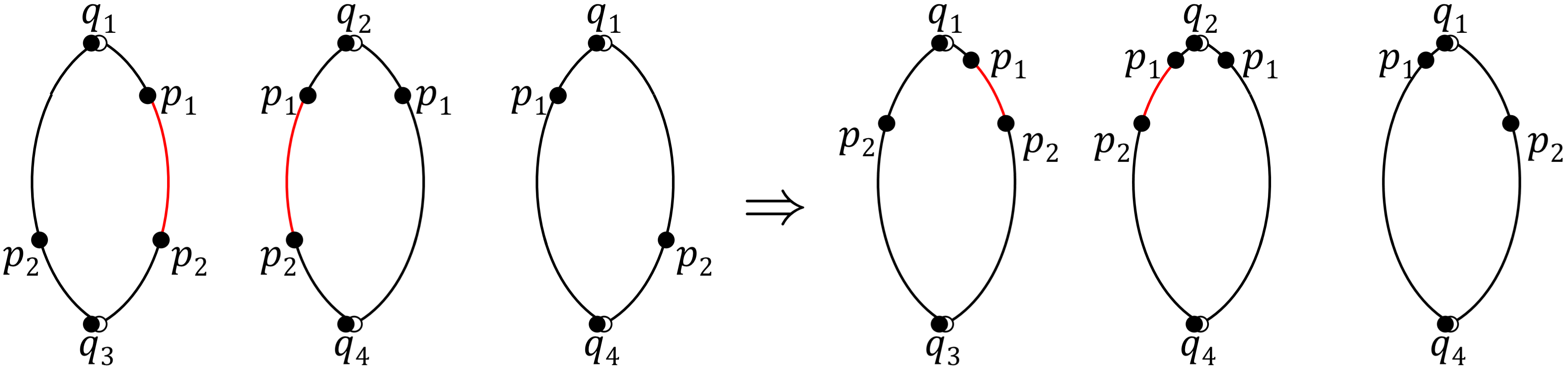}
  \caption{The effect of $\lambda$-deformation, which is a special case of sliding deformation.}
  \label{fig:sliding_lambda}
\end{figure}

\begin{remark}\label{rmk:equi_class}
    The 1-forms $D^*_\lambda(\ii z/\mathrm{d}z)$ on $\Sigma$ are the same for all $\lambda>0$. Thus one can not distinguish between the 1-parameter family of co-axial metrics that differ by $\lambda$-deformation based solely on 1-forms on the surface. These families of metrics are usually considered as one equivalent class in literature, such as in \cite{EGT14,CLY22axv}. Our modification on the transverse measure of $F$ allows us to distinguish this family by encoding more geometric information. Therefore, the moduli space of foliated surfaces is finer than the moduli space of 1-forms that induce co-axial metrics.
\end{remark}

\begin{remark}
We can consider the limit surface as $\lambda$ approaches 0. The distance from any points to the origin tends to $0$, except for $\infty$. This means that we lost information about all cone points, except for those mapped to $+\infty$. In terms of the strip decomposition, what remains is solely the number and width of bigons, and the gluing pattern for boundary segments that end at the south vertices. 

Based on this observation, we can assert that the limit of $\lambda$-deformation of a co-axial surface is a singular surface glued from several footballs along one of their vertices. In the summer of 2023, Zhiqiang Wei also achieved identical results for co-axial spheres.
The number of footballs in the limit surface equals to the number of preimages $D^{-1}(\infty)$ on $\Sigma$. The case of $\lambda\to+\infty$ is similar. Such degenerated surfaces merit further investigation and are related to the boundary of moduli space.
\end{remark}

\subsection{Splitting a pole}\label{sec:split}
This deformation turns a pole of the foliation of angle $2k\pi$ into a zero of the same angle. 
It can be regarded as a special kind of sliding deformation.

\begin{definition}\label{def:split}
    For $p\in P\cap Q$ of cone angle $2k\pi, k\in\ZZ_{>1}$, 
    the \DEF{split deformation} slightly move it along $G$, so that the pole splits into $k$ poles which are smooth point on the surface, and the cone point becomes a zero of foliation with index $2-2k$. See Figure \ref{fig:split_pole} and the geometric description below.
    \begin{figure}[ht]
      \makebox[\textwidth][c]{\includegraphics[width=1.05\textwidth]{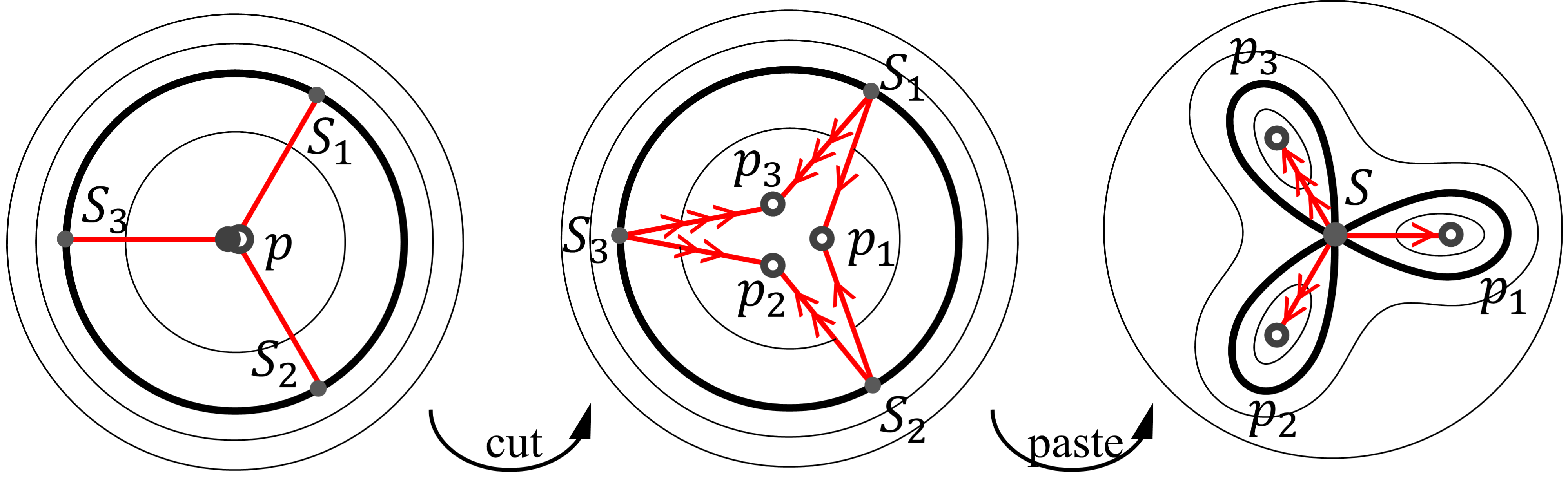}}
      \caption{Splitting a pole of cone angle $6\pi$. Some leaves of $F$ is shown. Pay attention to the bold line that switches from a regular leaf to singular leaves.}
      \label{fig:split_pole}
\end{figure}
\end{definition}
Geometrically, this deformation is obtained by a cut-and-paste operation. First pick $k$ isometric longitude segments $v_1=p S_1, \cdots, v_k=p S_k$ from $p$, equally spaced by $2\pi$ angle. The length $\ell$ of these closed segments is small enough so that they contain no singularity other than $p$. 
Cutting the surface along $v_i$'s, one obtain a bordered surface with one boundary component. The cone point $p$ splits into $k$ boundary vertices $p_1, \cdots p_k$. Suppose the vertices are labeled as $S_1, p_1, S_2, p_2, \cdots S_k, p_k, S_1$ in counterclockwise order. Then glue $p_i S_i$ to $p_i S_{i+1}$ for all $i=1,\cdots, k$ (indices are modulo $k$). The foliation structure matches naturally, because the boundary are leaf segments of $G$ and perpendicular to leaves of $F$. All $S_i$ are glued to a common point, which is the new cone point of angle $2k\pi$, and each $p_i$ becomes a pole. 

There is a 2-dimensional choice when splitting a zero: the directions of the $k$ equally spaced segments, and the length $\ell$ of the segments.
This operation also appears in \cite{Erm20}.

\begin{remark}
    Actually this can be generalized to a local operation that splits a cone angle of $2\pi\theta$ into $k+1$ cone points $(k\geq2)$ of angle $2\pi\theta_1, \cdots, 2\pi\theta_k, 2k\pi$ with $\theta_1+\cdots +\theta_k = \theta$. This does not require dihedral condition, since the foliations can always be defined locally around a cone point.
\end{remark}


The splitting deformation provides some relations between the moduli spaces of different type partitions. 
\begin{proposition}\label{prop:low_dim_bd}
    Given a angle vector $\vec{\theta}:=(\theta_1, \cdots, \theta_n) \in \RR^n_+$ and a type partition $\vec{T}=( E, O, N )$, assume that the moduli space $\MsphgnDFT$ is non-empty. If there is some $k\in N$ such that $\theta_k\in\ZZ$, then $\MsphgnDFT$ appears at the lower dimensional boundary of some other space $\MsphDF{g}{n}{\vec{\theta},\vec{T}^0}$. Here $\vec{T}^0$ is obtained by moving every $k\in N$ with $\theta_k \in \ZZ$ into $E$. Similar results hold for co-axial surfaces.
\end{proposition}

\begin{proof}
    If $(\Sigma;F,G)\in \MsphgnDFT$, for each $k\in N$ with $\theta_k\in \ZZ$, we may apply a splitting deformation at cone point $p_k$, resulted in a foliated surface in $\MsphDF{g}{n}{\vec{\theta},\vec{T}''}$ with $\vec{T}''= ( E\cup\{ k \}, O, N-\{ k \} )$. As pointed out above, the choice of splitting has a 2-dimensional parameter space. Viewing this deformation in reverse order, the original foliated surface can be regarded as a limit of surface in $\MsphDF{g}{n}{\vec{\theta},\vec{T}''}$ with some marked points on the boundary tending to the vertex of the bigons. 
    Due to the choice of splitting, this limiting set has lower dimension. And the propersition is obtained by induction.
\end{proof}

\subsection{Twist deformation}
Another type of deformation is “perpendicular” to the previous one. Specifically, this deformation preserves the latitude foliation. 
It resembles the “twist” in Fenchel-Nielsen coordinates for hyperbolic surfaces. 
A flat version is also discussed in the literature, for example, see \cite[Section 3.1]{Gupt15}.

Let $(\Sigma; F,G)$ be a foliated surface. 
Let $B$ be a component of its annulus decomposition, other than a punctured disk, with circumference $c=i(G,B)$. Then $B$ is foliated by closed leaves of $F$. Pick any such closed leaf $\gamma$ in its interior, and let $f_\gamma:[0,c]\to \gamma\subset B$ be a parameterization with respect to the transverse measure of $G$, where $f_\gamma(0)=f_\gamma(c)$. Cut $B$ into two half annuli $B^+, B^-$ along $\gamma$, with $B^+$ on the left of $\gamma$. The cut locus of $\gamma$ as the boundary of $B^\pm$ is denoted by $\gamma^\pm$, parameterized by $f_\gamma^\pm:[0,c]\to \gamma^\pm$ with $f_\gamma^\pm(0)=f_\gamma^\pm(c)$ as before. Then $ B = B^+ \cup B^- \big{/} f_\gamma^+(x) \sim f_\gamma^-(x),\ \forall x\in[0,c] $.

\begin{definition}\label{def:twist}
    Let $\psi\in[0,c)$. A \DEF{$\psi$-longitude (right) twist deformation} of $(\Sigma;F,G)$ along $B$ is cutting $\Sigma$ along $\gamma$ and re-gluing $\gamma^+,\gamma^-$ by the identification
    \[ f_\gamma^+([x]_c) \sim f_\gamma^-([x+\psi]_c) \]
    for all $x\in [0,c]$. Here for any $t\in\RR$, $[t]_c$ is the real number in $[0,c)$ such that $[t]_c \equiv t (\mathrm{mod}\ c)$.
\end{definition}

\begin{figure}[ht]
  \centering
  \includegraphics[width=0.85\textwidth]{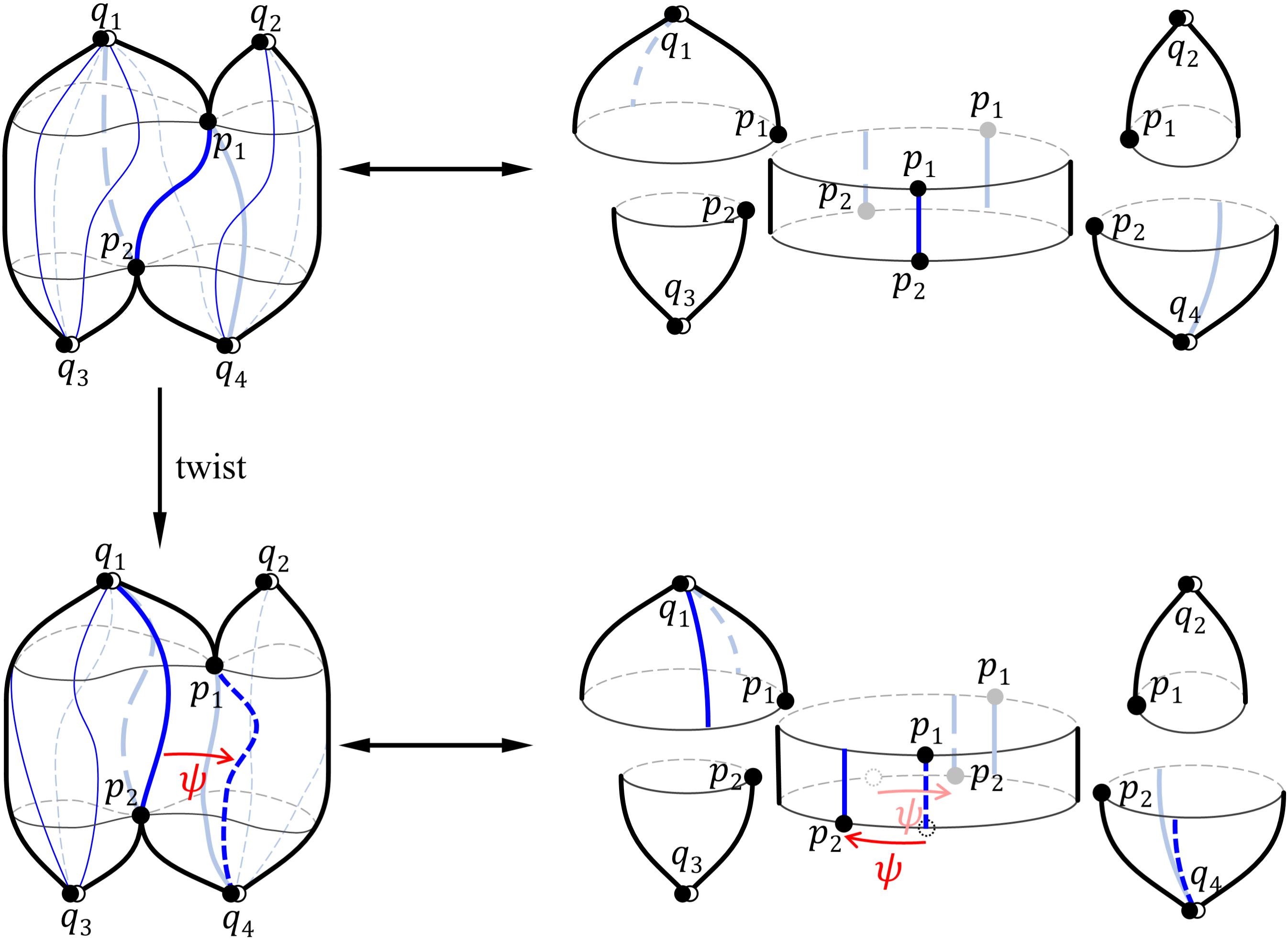}
  \caption{A positive twist is applied along the central annulus, with the longitude foliations being depicted. This causes a change in the strip decomposition. It appears that $G$ undergoes a leftward twist, because a leaf segment now connects to a segment that was previously positioned to the left.}
  \label{fig:twist}
\end{figure}

Intuitively, the twist deformation alters the alignment of the leaves of $G$ when they traverse the annulus $B$. Observed from one side of $B$, everything on the opposite side appears to shift to the right. Note that $F$ remains unchanged throughout this process.
The deformation is independent of the selection or orientation of $\gamma\subset B$. The parameter $\psi$ can take any real value. Nevertheless, considering our focus on the moduli space (rather than the \teich\ space), the result remains identical if the twist parameters are congruent modulo $c$. Consequently, twist deformation manifests as an $\SOne$-action on a continuous family of dihedral surfaces.

\begin{remark}
    Twist deformations can be applied to general spherical surfaces, extending beyond just the dihedral ones. The crucial element is a simple closed curve with constant geodesic curvature and not being homotopic to a cone point. 
\end{remark}

\subsection{Generic foliated surface}\label{sec:generic}
As an application of the previously defined deformations, we can discuss the typical characteristics of most dihedral surfaces. These surfaces have the highest number of independent parameters, thereby constituting the top-dimensional subset within the moduli space. Our attention will be focused on these surfaces for dimension counting.
\begin{definition}\label{def:generic_foliated}
        A foliated surface $(\Sigma;F,G)$ is said to be \DEF{generic} if each boundary of the bigons in strip decomposition contains exactly one marked point.
        This is equivalent to the absence of a singular leaf connecting zeroes of $G$.
\end{definition}

We now introduce a reversible deformation process that transforms a non-generic surface into a generic one. This allows us to view a non-generic surface as a special case where certain bigons in the strip decomposition possess zero width. This explains the usage of the term “generic”.

\begin{proposition}\label{prop:generic}
    Each non-generic foliated surface $(\Sigma;F,G)$ can be continuously deformed to a generic one with fixed cone angles and type partition.
\end{proposition}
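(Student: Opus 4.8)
The plan is to make precise the intuition stated right before the proposition, namely that a non-generic foliated dihedral surface is a degeneration of a generic one in which certain bigons have collapsed to width zero, and then to exhibit an explicit continuous family realizing the reverse degeneration while preserving cone angles and the type partition. First I would recall from the strip decomposition (Definition \ref{def:strip_decomp}) that $(\Sigma;F,G)$ is completely encoded by a finite gluing pattern together with the widths $w_i$ of the bigons and the lengths of the boundary segments between consecutive marked points. The surface is non-generic precisely when some boundary arc of some bigon carries two or more marked points, equivalently when there is a singular leaf of $G$ joining two zeros of $G$ (a vertical saddle connection between cone points of even angle). The goal is to remove such connections one at a time.

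The key step is the following local surgery. Suppose a boundary side $s$ of a bigon $B$ contains two consecutive marked points $a$ and $b$ (corresponding to cone points $p_a,p_b \in P - Q$ of angles $2k_a\pi, 2k_b\pi$), joined along $s$ by a subarc $\sigma$ of a singular leaf of $G$. I would split off, along the leaf of $G$ through $\sigma$, a new bigon $B_\varepsilon$ of small width $\varepsilon>0$ inserted between the two copies of that leaf, reglued so that the marked points $a$ and $b$ end up on opposite sides of $B_\varepsilon$; this is an instance of the sliding/splitting picture already used for splitting a pole, applied here to a vertical saddle connection rather than to a pole. For $\varepsilon>0$ the arc $\sigma$ is no longer a single boundary side carrying two marked points: each side of $B_\varepsilon$ then carries exactly one of $a,b$. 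Crucially this operation changes neither the cone angle of any $p_i$ (each marked point still contributes $\pi$ to the same vertex, and the vertices of $B_\varepsilon$ are glued to the existing poles of $Q$) nor the type partition $\vec T$ (no zero becomes a pole or vice versa: $p_a,p_b$ remain zeros of the prescribed index, and poles remain poles). Letting $\varepsilon \to 0$ recovers $(\Sigma;F,G)$, so the family $\{(\Sigma_\varepsilon;F_\varepsilon,G_\varepsilon)\}$ is a continuous deformation in the moduli space $\mathcal{M}sphF_{g,n}^D(\vec\theta;\vec T)$ (resp. the co-axial version) with $\Sigma_0=\Sigma$; for $\varepsilon$ small the new bigon strictly reduces the number of marked points appearing on the offending side.

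Then I would iterate: each application either removes a vertical saddle connection between two zeros or reduces the number of marked points on some boundary side by one, so after finitely many steps (bounded by, say, $\sum_{i\in E\cup O}(k_i-1)$ plus the number of saddle connections in $\mathrm{Sing}(G)$) every boundary side of every bigon carries exactly one marked point, i.e. the surface is generic. One must check that the surgeries can be performed independently — inserting a thin bigon at one location does not re-create a multiply-marked side elsewhere — which is clear because each insertion is supported near one leaf of $G$ and only subdivides, never merges, boundary arcs. Concatenating the finitely many one-parameter families (reparametrizing so they fit together continuously) gives the desired path from $(\Sigma;F,G)$ to a generic foliated surface of the same angle vector and type.

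The main obstacle I anticipate is purely bookkeeping rather than conceptual: verifying that the thin-bigon insertion is genuinely a well-defined operation on closed glued bigons when the relevant leaf of $G$ lies in $\mathrm{Sing}(G)$ versus in its complement — these are the two cases already distinguished in the discussion following Definition \ref{def:split} — and that in both cases the resulting gluing pattern still produces a surface of genus $g$ with $n$ labeled cone points of the prescribed angles and the prescribed type. Once the local model is pinned down carefully and shown to be continuous in $\varepsilon$ and to decrease the total ``marked-point excess'' $\sum_{\text{sides }s}(\#\{\text{marked points on }s\}-1)^+$, the proposition follows by induction on that quantity.
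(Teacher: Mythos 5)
Your overall strategy — produce a one-parameter deformation that opens up a thin bigon of width $\varepsilon$ to break a vertical saddle connection, then iterate — is in the right spirit, and indeed the paper's proof also ends up producing a new thin bigon. But the local surgery you describe is not well-defined as stated, and the gap is more than bookkeeping. The problem is the cone angles at the \emph{poles}. In the strip decomposition the cone angle at a pole $q \in Q$ equals the sum of the widths of the bigons whose vertices are glued to $q$. If you genuinely ``insert'' a new bigon $B_\varepsilon$ ``between the two copies'' of the slit leaf, with its two vertices glued to the existing poles $q_1, q_2$ at the ends of that leaf, then the cone angles at $q_1$ and $q_2$ each increase by $2\pi\varepsilon$ — so the angle vector is not preserved. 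Your parenthetical justification (``vertices of $B_\varepsilon$ are glued to the existing poles of $Q$'') is precisely what causes the failure, not what saves you. To keep the angle at $q_1$ fixed you would have to compensate by shrinking some other bigon at $q_1$ by $\varepsilon$, and that compensation then has to propagate consistently to $q_2$ and around the rest of the surface. There is also a second problem: after cutting along the full length-$\pi$ leaf $q_1$--$a$--$b$--$q_2$ and gluing in $B_\varepsilon$, both copies $a^+,b^+$ land on one side and $a^-,b^-$ on the other, so both sides of $B_\varepsilon$ still carry two marked points unless you also shift the gluing; a naive shift either unglues $a^+$ from $a^-$ (destroying the cone point) or glues a cone point to a regular point (changing its angle). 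The analogy with splitting a pole does not carry over, because the split is a cut-and-reglue with a cyclic shift around a \emph{single} vertex and so is genuinely local, whereas breaking a saddle connection requires a shift that must be compatible all the way around a closed leaf of $F$.

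The paper resolves exactly this global-consistency issue by using the twist deformation (Definition \ref{def:twist}) along a full annulus $A$ of the annulus decomposition containing $\sigma$. Shearing $A$ by a sufficiently small amount $t$ breaks the saddle connection (and creates the thin bigon of width $|t|$ you were envisioning), but it does so while \emph{fixing the measured foliation $F$}: the poles do not move, the $G$-measure of a small closed leaf of $F$ around each pole is unchanged, and the index of each zero is unchanged, so cone angles and the type partition are automatically preserved without any compensation argument. The ``small $t$'' condition $0<\lvert t\rvert <\min_j d_j$, where the $d_j$ are the longitude gaps between consecutive singular leaves of $G$ hitting the relevant boundary circle of $A$, guarantees no new saddle connection is created, and the finite induction then goes through exactly as in your last paragraph. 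So the missing ingredient in your argument is not ``pinning down the local model'' but replacing the local slit-and-insert surgery with the twist along an annulus, which is what supplies the needed global consistency.
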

\begin{proof}  
    We call a critical leaf connecting zeros of $G$ \DEF{vertical saddle connection}. Since $P$ is finite, there are finitely many vertical saddle connections in $\SingG$. Let $\sigma$ be such a leaf connecting two zeros $p_1, p_2$, possibly the same.

    Now in the annulus decomposition of $(\Sigma;F,G)$, there exists an annulus $A=A(\sigma)$ passing through $\sigma$, and containing $p_1$ in one boundary component $C_1\subset \pt A$. It must not be a punctured disk. Otherwise the other end of $\sigma$ is the pole in that disk. 

    There are finitely many leaves in $\SingG$ intersecting $A$. These leaves, ending in singularities of $G$, must hit $C_1$. Then $C_1$ is divided into several open segments $I_1, \cdots, I_K$ by these intersections. Denote longitude variation of $I_j$ by $d_j=i(I_j,G)>0$. Let $t$ be a real number with $0<\abs{t}<\min\{d_1,\cdots,d_K\}$. 
    Then a $t$-longitude twist along $A$ will break the vertical saddle connection $\sigma$, and not produce new vertical saddle connection. See Figure \ref{fig:twist_generic}. So we find a 1-parameter deformation of $(\Sigma;F,G)$ with fewer vertical saddle connections. Everything is unchanged outside the annulus $A$, and the local gluing data around cone points on $\pt A$ is fixed. So the angle vector and the type partition of the new surfaces are all the same.

    Finally, by induction, one can break all vertical saddle connections without changing the cone angles and the type partition. In addition, the absolute latitude of all cone points are fixed.
\end{proof}

\begin{figure}[ht]
  \makebox[\textwidth][c]
  {\includegraphics[width=\textwidth]{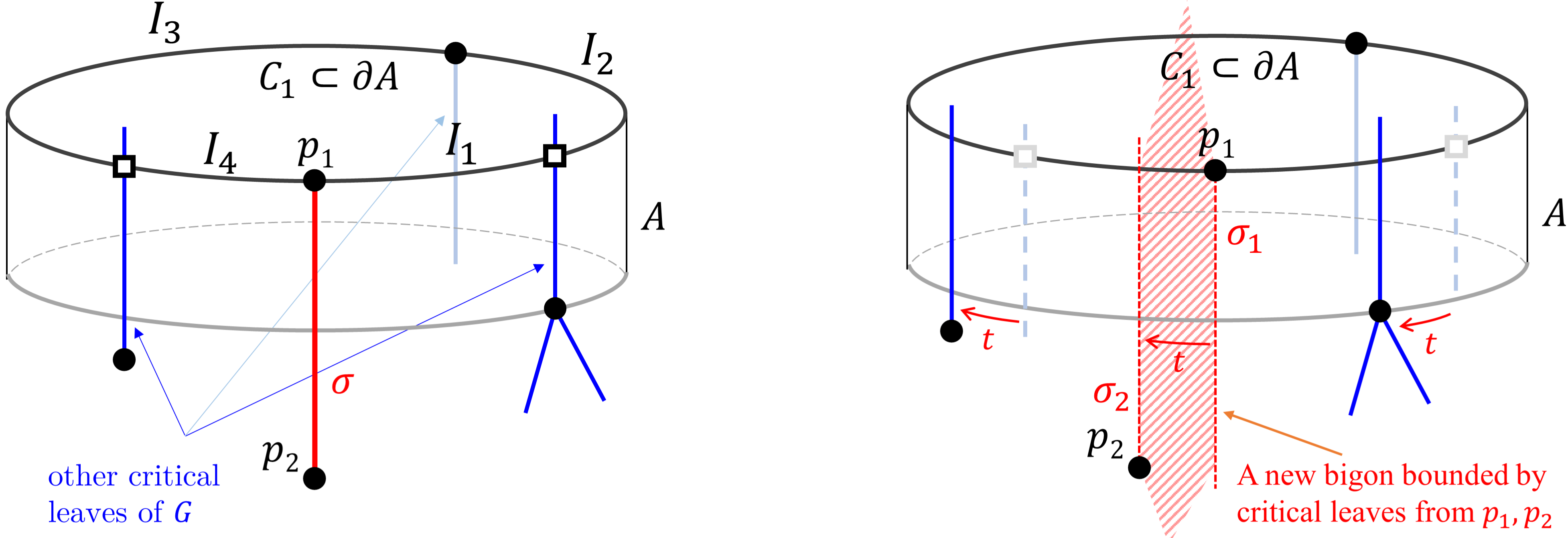}}
  \caption{Twist along an annulus $A$ to break vertical saddle connections $\sigma$. A new bigon appears after the deformation.}
  \label{fig:twist_generic}
\end{figure}

\begin{corollary}\label{cor:low_dim}
    Non-generic foliated surfaces form a lower dimensional subset in $\MsphgnDFT$ or $\MsphgnCFT$ for every possible type partition $\vec{T}$.
\end{corollary}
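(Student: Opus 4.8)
The plan is to deduce this from Proposition \ref{prop:generic} together with a parameter count that identifies the generic locus as a top-dimensional open subset. First I would observe that by the strip decomposition a foliated dihedral surface of type $\vec T$ is parametrized by a finite gluing pattern together with continuous data: the widths $w_i>0$ of the bigons and the lengths of the boundary segments between consecutive marked points (all subject to the linear constraints recorded by the connection matrix of Definition \ref{def:cnnct_mtr}, and the constraint that each bigon side has total length $\pi$). For a fixed gluing pattern, the number of continuous parameters is determined, and passing to a gluing pattern with strictly more marked points on some bigon boundary — which is exactly what a vertical saddle connection produces — either increases the number of bigons (adding a width parameter forced to be $0$) or subdivides an existing boundary segment, in both cases realizing the non-generic configuration as the locus where one of the newly introduced widths or segment lengths degenerates to $0$.

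Next I would make this precise by running Proposition \ref{prop:generic} in reverse: given any non-generic $(\Sigma;F,G)$ in $\MsphgnDFT$ (resp. $\MsphgnCFT$), the twist deformation of that proposition produces, for small nonzero twist parameter $t$, a one-parameter family of generic foliated surfaces of the same type, degenerating to $(\Sigma;F,G)$ as $t\to 0$. Thus every non-generic surface lies in the closure of the generic locus but not in its interior — a neighborhood of a non-generic surface, in the natural parameter space of strip decompositions with the enlarged gluing pattern, meets the generic locus in a set of strictly smaller codimension-$\geq 1$ complement. Since the dimension of $\MsphgnDFT$ (resp. $\MsphgnCFT$) is, by the dimension count carried out via the connection matrix, equal to the dimension of its generic part, the non-generic part — being contained in a finite union of the ``$w_i=0$'' or ``segment length $=0$'' walls of the various gluing-pattern strata — has strictly smaller dimension.

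The cleanest way to organize this is: (i) note that there are only finitely many gluing patterns compatible with a fixed $(g,n,\vec\theta,\vec T)$, so the moduli space is covered by finitely many charts, each of which is a convex polytope (cut out by the connection-matrix equalities and the positivity/length inequalities) modulo the finite relabeling and the $\SOne$ twist actions; (ii) in each chart the generic surfaces form the open subset where all the relevant positivity inequalities are strict, hence of full dimension, while the non-generic ones lie on the boundary faces; (iii) the boundary faces have dimension strictly less than the chart, and gluing-pattern strata other than the top one are identified (via collapsing zero-width bigons) with such faces, so the non-generic locus is a finite union of lower-dimensional pieces. Then one concludes $\dim(\text{non-generic}) < \dim \MsphgnDFT$, and the same for the co-axial case verbatim.

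The main obstacle I anticipate is bookkeeping rather than conceptual: one must check that collapsing a zero-width bigon (or a zero-length boundary segment) really does land one in the claimed face and does not accidentally change the type partition $\vec T$ or the topology $(g,n)$ — i.e. that the degeneration is ``vertical'' in the moduli space and not a genuine boundary stratum of a compactification. Proposition \ref{prop:generic} already guarantees the type and cone angles are preserved under the reverse twist, so the remaining point is just to confirm that the enlarged gluing pattern has exactly one more free parameter per added marked point, which is immediate from the structure of the strip decomposition. Hence the corollary follows with no new input beyond Proposition \ref{prop:generic} and the dimension formulas.
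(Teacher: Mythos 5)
Your proposal is correct and takes essentially the same approach as the paper: both deduce the corollary from Proposition~\ref{prop:generic} by running the twist deformation in reverse, realizing a non-generic surface as a generic one in which the width of one (or more) newly created bigons has degenerated to zero, and hence placing the non-generic locus on a boundary face of the width-parameter chart. Your extra bookkeeping via finitely many gluing patterns and polytopal charts is a legitimate fleshing-out of the paper's terse ``view the deformation in reverse order'' argument, not a different route.
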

\begin{proof}
    Follow the notations above. After twist deformation, the critical leaves from $p_1$ and $p_2$ in the direction of $\sigma$ mismatch at longitude variation $t$. By the choice of $t$, no other critical leaf lies in between them. So these two critical leaves lie on the boundary of a new bigon in strip decomposition.

    View the deformation in reverse order, the original foliated surface is obtained by shrinking the width of a certain bigon to zero.
    The more times of twist deformation applied to obtain a generic surface, the more bigons need to be shrunk. Note that the absolute latitude of each cone point remains unchanged during the process. So non-generic surfaces have fewer parameters than generic ones.
\end{proof}


\subsection{Genus zero surface with two cone points}\label{sec:2cone_sphere}
For completeness, we now discuss the cases where $2g-2+n\leq0$. The only choices for such $(g,n)$ are $(0,0),(0,1),(0,2),(1,0)$. 

By Gauss-Bonnet formula, a sphere with one cone point must be smooth, and torus without cone point can not be spherical. 
The only remaining case is $(g,n)=(0,2)$. However, genus zero spherical surfaces with two cone points has already been studied \cite{Tro91}. 
Also see the description in \cite[Theorem 2.10]{Tah22}. 
We present a topological proof here.

\begin{proposition}
    If $\Sigma$ is a genus 0 cone spherical surface with 2 cone points, other than the football $S_\theta$ (See Section \ref{sec:spherical_cone}), then it is obtained by cyclically gluing $m\in\ZZ_+$ copies of unit sphere with a same slit of length $l\in(0,\pi)$ as Figure \ref{fig:SlitSph}.
\end{proposition}
\begin{figure}
    \centering
    \includegraphics[width=0.8\linewidth]{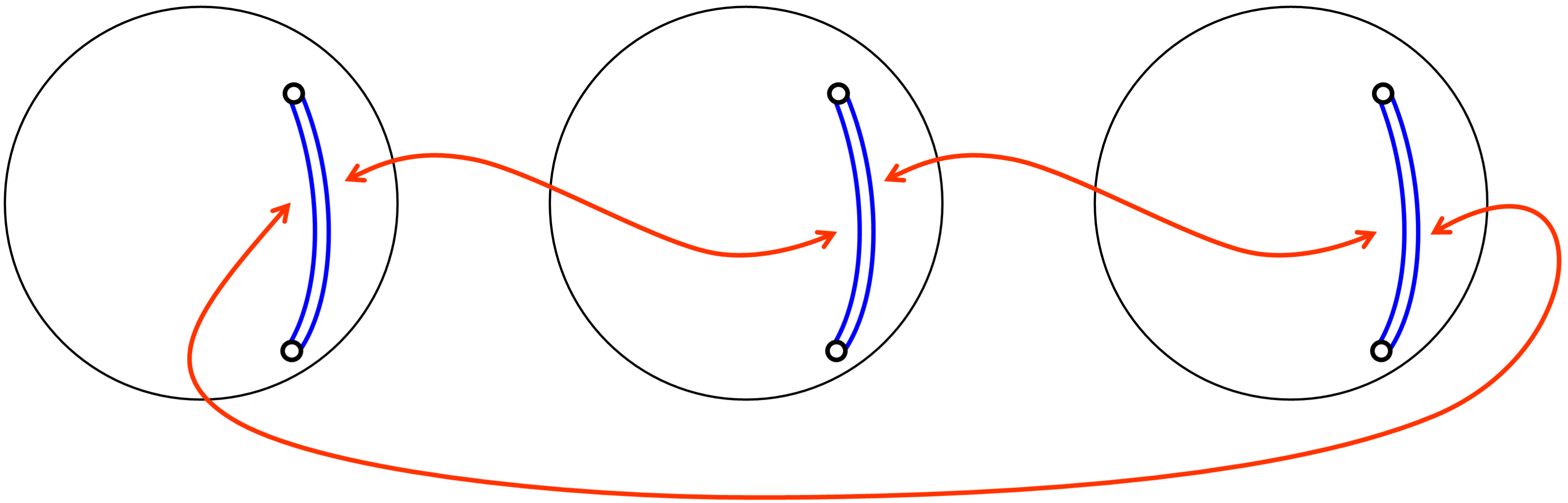}
    \caption{A genus zero surface with two cone points of angle $6\pi$, which is obtained by cyclically gluing slit spheres.}
    \label{fig:SlitSph}
\end{figure}

\begin{proof}
    Let $P=\{p_1, p_2\}$. Since $\pi_1(\Sigma-P)\cong \ZZ$, $\Sigma$ must be co-axial. Let $(\Sigma;F,G)$ be a foliated surface induced by developing map $D$ such that $\rho(\pi_1)\subset O(1)$. Recall that $Q=\pi D^{-1}\{N,S\}$.
    By \PH\ formula (\ref{eq:PH}), $\sum \ind(p) = 2(2-2g) = 4$. Since every cone point is a singularity of $G$, there is at most 2 zeros of $G$, or $\#(P-Q)\leq2$.

    If $p_1,p_2\in Q$, there is no other singularity, and $\SingG = \emptyset$. Then $\Sigma$ must be some football $S_\theta$.

    If $p_1,p_2\in P-Q$, then $\theta_1,\theta_2\in\ZZ$ and $\rho(\pi_1)$ is trivial. We can always compose a $\SOthr$ element so that $D(\pi^{-1}(p_1))=N$. So we only need to consider the case where $p_1\in Q$ and $p_2\in P-Q$. Then $\theta_2 = m \in\ZZ_+$ and $\rho(\pi_1)$ is trivial. So $D$ reduced to a univalent map, also denoted by $D$, directly from $\Sigma$ to $\STwo$. Let $X=D(p_2)\notin \{N,S\}$ and $B=D^{-1}(X,N)$. The developing map $D$ is a local \homeo\ outside cone points, so the restricted map $D:\Sigma-B \to \STwo-\{N,X\}$ is a covering map. 
    Since $\theta_2 = m$, the degree of cover is $m$. Thus the reduced map $D:\Sigma\to\STwo$ is a $m$-sheet covering branched at $p_1,p_2$, and $\theta_1=\theta_2=m$. We obtained the desired result.
\end{proof}

\begin{theorem}
    When $\theta_1\neq\theta_2$, $\Msph{0}{2}{\theta_1,\theta_2}$ is empty. For $\theta\notin\ZZ$, $\Msph{0}{2}{\theta,\theta}$ is a single point. For $\theta\in\ZZ_{>1}$, $\Msph{0}{2}{\theta,\theta}$ is isomorphic to the interval $(0,\pi/2]$.
\end{theorem}

\begin{proof}
    The first two assertion is a direct corollary of the previous proposition.
    When $\theta_1=\theta_2=\theta\in\ZZ$, following the notations above, the only real parameter for the branched covering is the distance from $X=D(p_2)$ to $N$, which lies in $(0,\pi/2)$. Also note that when that distance is $\pi/2$, it is just the football $S_{\theta}$. 
\end{proof}

\textbf{Remark.} Strip decomposition and techniques in next section still work when $\theta\in\ZZ$, since $\SingG\neq\emptyset$. But most gluing patterns are not feasible. 
Topological method is employed to obtain a global perspective.

\section{Dimension counting}\label{sec:Dimension}
To proof Theorem \ref{thm:Main}, we chase diagram (\ref{eq:moduli_spaces_D}) and (\ref{eq:moduli_spaces_C}) from the upper left corner. We first study the fiber of projection $p$. 
Then we count independent parameters for foliated surfaces of a given type partition. Finally, we vary the type partition and obtain the dimension count. The overall idea is summarized in the flow chart of Figure \ref{Summary} at the end of this section. 
Strict dihedral and co-axial cases are treated separately. $2g-2+n>0$ is assumed by default.

\subsection{Identification lemma}
\label{subsec:iden}
Basically, the identification lemma answers how many strip decomposition can be identified to a single surface.
Let us examine the following intriguing example, which highlights the necessity of such lemma.

\begin{example}\label{eg:one_cone_torus}
   A generic foliated surface in $\MsphDF{1}{1}{2}$ is glued by 2 bigons. The width $w_1, w_2$ of the two bigons satisfying $w_1+w_2=\pi$ so that the vertices of the bigons glued to a smooth point. The gluing pattern is shown in the left column of Figure \ref{fig:three_decomp}.

    However, there are two more choices of strip decomposition or foliation. The symmetry center of each bigon is the new pole of another foliation. The critical leaves of that longitude foliation on each bigon are also symmetric about its center. The middle and right column of Figure \ref{fig:three_decomp} shows how to cut the original bigons and glue to new ones. All blue and green arcs are geodesic segments. 

    Let $x,x',x''$ be the length of the shorter boundary segment of bigons in each decomposition, and $w_1, w'_1, w''_1$ be the width of the bigon whose shorter boundary segment lies on the left. By spherical geometry, they satisfy the equations
    \[ \left\{ \begin{array}{lll}
         \cos x' &= \sin x  &\cos(w_1/2)\\
         \cos x  &= \sin x' &\sin(w'_1/2)
    \end{array}\right.,\quad
    \left\{ \begin{array}{lll}
         \cos x'' &= \sin x  &\sin(w_1/2)\\
         \cos x   &= \sin x''&\cos(w''_1/2)
    \end{array}\right. .\]
    \hfill $\square$
    \begin{figure}[hbt]
      \makebox[\textwidth][c]{\includegraphics[width=1.05\textwidth]{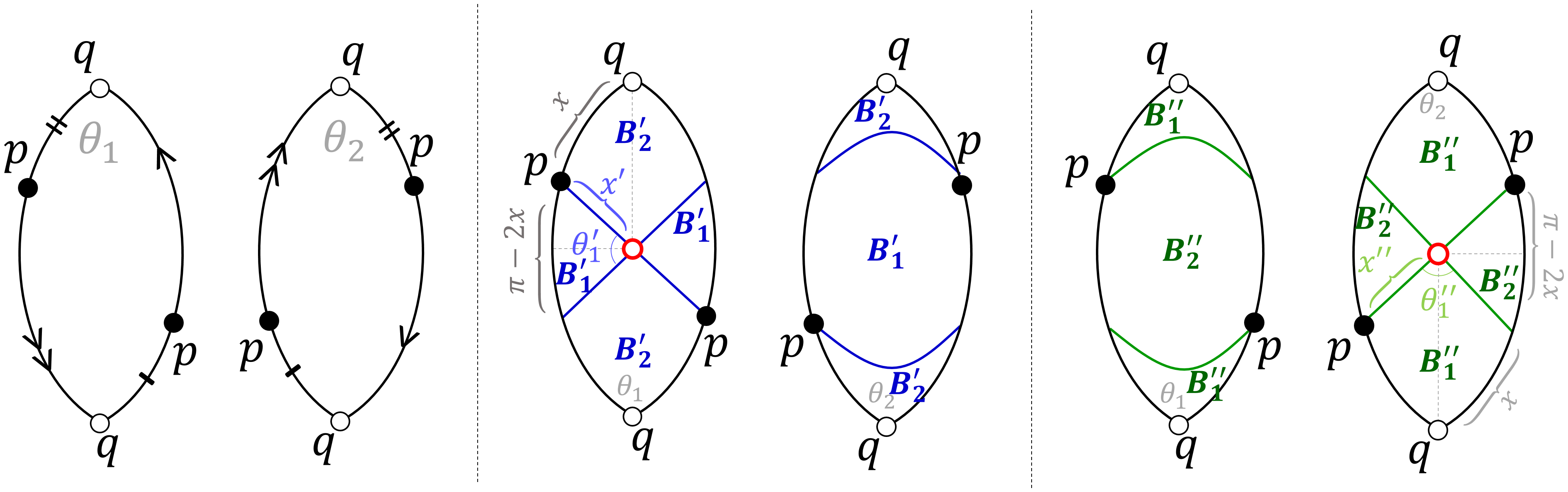}}
      \caption{Three strip decomposition of a same spherical torus with one $4\pi$ cone point.}
      \label{fig:three_decomp}
    \end{figure}
\end{example}

\textbf{Remark.} In fact $\mathcal{M}sph_{1,1}(2)=\MsphD{1}{1}{2}$. Also see \cite[Theorem F.]{EMP20}.

\medskip
Before stating and proving the main proposition we make some settings. Suppose $\Sigma\in\MsphgnD$ admits more than one foliated surfaces, and let $(\Sigma;F,G)$ be one of them in $p^{-1}(\Sigma)\subset\MsphgnDF$ induced by developing map $D$, with monodromy group $\Gamma:=\rho_D(\pi_1)\subset \Otwo$ (see Section \ref{sec:dihedral}).
Every other developing map must be differed from $D$ by a $\PSUtwo$ element, and the monodromy group is differed by a conjugation.
Thus finding another foliated surface is equivalent to finding some $g\in\PSUtwo$ such that $g\Gamma g^{-1} \subset \Otwo$.
Let 
$$\SYM{G_\Gamma}:=\{\ \phi\in\PSUtwo \ \rvert\ \phi^{-1}\Gamma\phi\subset \Otwo \ \}.$$
Obviously, $\Otwo\subset G_\Gamma$. However, $G_\Gamma$ may not be a group. If $\phi,\psi\in G_\Gamma$, then $(\phi\psi)^{-1}\Gamma(\phi\psi)=\psi^{-1}(\phi^{-1}\Gamma\phi)\psi$. Thus $\phi\psi\in G_\Gamma$ 
if and only if $\psi\in G_{\phi^{-1}\Gamma\phi}$.

On the other hand, if $\phi\in G_\Gamma$, then for any $r\in\Otwo$, $\phi\circ r \in G_\Gamma$. So we need to study the right quotient $G_\Gamma/\Otwo$.

\begin{proposition}[Identification lemma for strict dihedral surface]\label{prop:id_SDhd} ~\\
    For strict dihedral $(\Sigma;F,G)$, $\#\{ G_\Gamma/\Otwo \}$ is 1 or 3. And $\#\{ G_\Gamma/\Otwo \}=3$ if and only if $\Gamma$ is isomorphic to the Klein-4 group $K4:= \ZZ_2 \times \ZZ_2$.

    In other words, $\#\{p^{-1}(\Sigma)\} \leq 3$ for all $\Sigma\in\MsphgnD$.
\end{proposition}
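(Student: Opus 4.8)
The plan is to analyze the set $G_\Gamma = \{\phi \in \PSUtwo \mid \phi^{-1}\Gamma\phi \subset \Otwo\}$ directly via the geometry of how $\Otwo$ sits inside $\PSUtwo$, using that the pair of antipodal points $\{0,\infty\}$ fixed (as a set) by $\Otwo$ is essentially the only data that matters. First I would observe that $\phi \in G_\Gamma$ forces $\phi^{-1}\Gamma\phi$ to preserve $\{0,\infty\}$, so $\Gamma$ preserves the antipodal pair $\{\phi(0),\phi(\infty)\}$ on $\STwo$. Conversely, any antipodal pair preserved by $\Gamma$ arises this way, and two elements $\phi,\phi'$ give the same coset in $G_\Gamma/\Otwo$ precisely when $\phi(\{0,\infty\}) = \phi'(\{0,\infty\})$ (since the stabilizer in $\PSUtwo$ of the antipodal pair $\{0,\infty\}$ is exactly $\Otwo$). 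So $\#\{G_\Gamma/\Otwo\}$ equals the number of antipodal pairs on $\STwo$ preserved by $\Gamma$. The task reduces to: a finite or infinite subgroup $\Gamma \subset \PSUtwo$ that is strict dihedral (preserves $\{0,\infty\}$ but does not fix an axis) preserves either exactly one antipodal pair, or exactly three.

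Next I would classify which $\Gamma \subset \Otwo$ can preserve a second antipodal pair $\{a,-a\}$ distinct from $\{0,\infty\}$. The rotation subgroup $\Gamma \cap \Uone$ rotates about the $0$–$\infty$ axis; if it preserves another axis it must be trivial or the $\pi$-rotation $z\mapsto -z$ (the half-turn about $0$–$\infty$), because a nontrivial rotation of any other order fixes only its own axis. Since $\Gamma$ is not co-axial, it contains an element of the flip coset, of the form $z \mapsto e^{\ii\theta}/z$, which is a $\pi$-rotation about an axis lying in the equatorial plane. For $\Gamma$ to preserve $\{a,-a\}$, every element of $\Gamma$ must either fix or swap $a$ and $-a$; combining the constraint on $\Gamma\cap\Uone$ with the constraint that the flips in $\Gamma$ also preserve $\{a,-a\}$, one finds $\Gamma$ must be contained in a copy of the Klein four-group $\{z\mapsto z,\ z\mapsto -z,\ z\mapsto 1/z,\ z\mapsto -1/z\}$ — the group of the three coordinate half-turns — whose three half-turn axes are mutually orthogonal, giving exactly three preserved antipodal pairs (the three coordinate axes). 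So either $\Gamma$ preserves only $\{0,\infty\}$, giving one coset, or $\Gamma \subseteq K4$ and, being strict dihedral, must contain at least one flip, hence preserves all three coordinate axes and no others (any fourth preserved axis would force $\Gamma$ to lie in a strictly larger finite subgroup of $\SOthr$ preserving $\geq 4$ axes, which one checks does not happen while staying inside $\Otwo$); this gives three cosets. I would then note that $\Gamma \subseteq K4$ together with $\Gamma$ strict dihedral (so $\Gamma \not\subset \Uone$, i.e. $\Gamma$ contains a flip) and $\Gamma$ being the monodromy group of an honest strict dihedral surface forces $\Gamma$ to be all of $K4$ — a subgroup of order $2$ generated by a single flip is conjugate into $\Uone$ hence co-axial — so $\#\{G_\Gamma/\Otwo\} = 3$ iff $\Gamma \cong K4$, matching the statement. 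The final sentence $\#\{p^{-1}(\Sigma)\} \leq 3$ is then immediate from the setup in the paragraphs preceding the proposition, since foliated surfaces over $\Sigma$ correspond to $\PSUtwo$-conjugates of the developing map landing monodromy in $\Otwo$, modulo the $\Otwo$-ambiguity, i.e. to $G_\Gamma/\Otwo$.

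The main obstacle I anticipate is the careful case analysis showing that a strict dihedral $\Gamma$ preserving two distinct antipodal pairs is forced to lie in (in fact equal) the standard Klein four-group, and that it then preserves exactly three pairs and no more — one must rule out the possibility of extra preserved axes and handle the degenerate possibilities $\Gamma \cap \Uone$ trivial vs. equal to $\{\pm 1\}$, as well as confirm that order-$2$ subgroups generated by a lone flip are genuinely co-axial (so excluded by the "strict" hypothesis). A secondary subtlety is verifying the clean bijection between cosets $G_\Gamma/\Otwo$ and $\Gamma$-invariant antipodal pairs, i.e. that the $\PSUtwo$-stabilizer of $\{0,\infty\}$ is precisely $\Otwo$ and not something larger; this is a direct computation with Möbius transformations of the form $(az+b)/(-\bar b z + \bar a)$ fixing or swapping $0$ and $\infty$, but it is the hinge of the whole argument. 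Everything else — the reduction to antipodal pairs, and the bookkeeping with $G_{\phi^{-1}\Gamma\phi}$ already set up in the excerpt — should be routine.
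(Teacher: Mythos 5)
Your proposal is correct and takes essentially the same approach as the paper: both reduce the count of cosets in $G_\Gamma/\Otwo$ to counting $\Gamma$-invariant antipodal pairs on $\STwo$ (the paper does this in its step (2) by computing $\phi\{0,\infty\}$), show $\Gamma\cap\Uone$ must equal $\{\mathrm{id},\ z\mapsto -z\}$ since the trivial case forces $\Gamma\cong\ZZ_2$ which is co-axial, and conclude $\Gamma\cong K4$ with exactly three preserved axes. One small wobble in your write-up: the claim that ``any fourth preserved axis would force $\Gamma$ to lie in a strictly larger finite subgroup'' is not the right mechanism --- a fourth axis would not enlarge $\Gamma$; rather, the group $K4$ itself simply preserves no antipodal pair other than the three coordinate axes, which the paper verifies directly by normalizing the coset representative to $a=1/\sqrt2$, $b=e^{\ii\beta}/\sqrt2$ and solving to get $\beta\in\{0,\pi/2\}$, yielding exactly $\phi_1=(z+\ii)/(\ii z+1)$ and $\phi_2=(z+1)/(-z+1)$.
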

\begin{proof}
    Suppose $G_\Gamma\neq \Otwo$.

\textbf{(Step 1).} We pick a representative for an element in $G_\Gamma/\Otwo$.

If $\phi:z\mapsto \frac{az+b}{-\bar{b}z+\bar{a}} $, then
\begin{align*}
    [\phi]:=\phi\cdot\Otwo &= \left\{\ z\mapsto \frac{a\mathrm{e}^{\ii\theta}z+b}{-\bar{b}\mathrm{e}^{\ii\theta}z+\bar{a}},\quad z\mapsto \frac{a\mathrm{e}^{\ii\theta}+bz}{-\bar{b}\mathrm{e}^{\ii\theta}+\bar{a}z} \ \bigg\rvert\ \theta\in\RR \ \right\} \\
    & = \left\{\ z\mapsto \frac{a\mathrm{e}^{\ii\theta/2}z+b\mathrm{e}^{-\ii\theta/2}}{-\bar{b}\mathrm{e}^{\ii\theta/2}z+\bar{a}\mathrm{e}^{-\ii\theta/2}},\quad z\mapsto \frac{b\mathrm{e}^{\ii(\pi-\theta)/2}z+a\mathrm{e}^{\ii(\pi+\theta)/2}}{\bar{a}\mathrm{e}^{\ii(\pi-\theta)/2}z-\bar{b}\mathrm{e}^{\ii(\pi+\theta)/2}} \ \bigg\rvert\ \theta\in\RR \ \right\}.
\end{align*}
Thus $\pm ab$ is the invariant of an element in $G_\Gamma/\Otwo$ in terms of $\PSUtwo$ representation. And $\phi\in\Otwo$ if and only if $ab=0$. So whenever $\phi\notin\Otwo$, we can choose a representative of $[\phi]$ such that $a>0$ and $\arg(b)\in[0,\pi)$.

\medskip
\textbf{(Step 2).} $\Gamma$ keeps a pair of antipodal points other than north and south poles.

Pick non-trivial $[\phi]\in G_\Gamma/\Otwo$, represented by $\phi:z\mapsto \frac{az+b}{-\bar{b}z+\bar{a}} $ with $a>0$ and $\arg(b)\in[0,\pi)$. Then $\phi\{0,\infty\} = \{b/\bar{a}, -a/\bar{b}\} \neq\{0,\infty\}$. Thus
\begin{align*}
    \Gamma\{b/\bar{a}, -a/\bar{b}\} = \Gamma\phi\{0,\infty\}    = \phi\big( \phi^{-1}\Gamma\phi\{0,\infty\} \big)
    = \phi\{0,\infty\} = \{b/\bar{a}, -a/\bar{b}\} .
\end{align*}
So $\Gamma$ keeps two different pairs of antipodal points.

\medskip
\textbf{(Step 3).} The rotation subgroup $\Gamma_0:=\Gamma\cap \Uone$ must be $\ZZ_2$, so $\Gamma\cong K4$.

$\Gamma_0$ can not be trivial. Otherwise for any $\gamma, \gamma'\in \Gamma-\Uone$, $\gamma\gamma'\in \Gamma\cap\Uone=\{\mathrm{id}\}$. However, every $\gamma\in \Gamma-\Uone$ is an order-2 element, or $\gamma^2=\mathrm{id}$. Thus there is only one element in $\Gamma-\Uone$ and $\Gamma\cong\ZZ_2$, contradicting strict dihedral condition.

Now let $\psi:z\mapsto \mathrm{e}^{\ii\theta}z$ be a non-trivial element in $\Gamma_0$ with $\ \theta\in\RR-2\pi\ZZ $. Then $\psi$ should fix the antipodal points $\{b/\bar{a}, -a/\bar{b}\} \neq\{0,\infty\}$. This implies $\psi=\gamma_0:=z\mapsto -z$. Then $\Gamma_0=\{\mathrm{id},\gamma_0\}$, and $\Gamma$ has 4 elements. Since all elements in $\Gamma-\Gamma_0$ are of order 2, $\Gamma\cong K4$. We have
$$ \Gamma=\{\ \mathrm{id},\ z\mapsto \mathrm{e}^{\ii\theta}/z,\ z\mapsto -z,\ z\mapsto -\mathrm{e}^{\ii\theta}/z\ \}. $$
Let $g_\theta:z\mapsto \mathrm{e}^{\ii\theta/2}z$. Then $\Gamma$ is conjugated to the standard $K4$ in $\PSUtwo$:
$$ g_\theta^{-1}\Gamma g_\theta = \{\ \mathrm{id},\ z\mapsto 1/z,\ z\mapsto -z,\ z\mapsto -1/z\ \} := K. $$
So we shall now assume that $\Gamma=K$ as above.

\medskip
\textbf{(Step 4).} Solving non-trivial $[\phi]\in G_\Gamma/\Otwo$.

Since $\Gamma$ also keeps $\{b/\bar{a}, -a/\bar{b}\} \neq\{0,\infty\}$, and the only fixed points of $\gamma_0:z\mapsto -z$ are $0,\infty$, we have $b/\bar{a}=a/\bar{b}$. Then $\abs{a}^2=\abs{b}^2=1/2$, and $a=1/\sqrt{2}$ by normalization. Let $b=\mathrm{e}^{\ii\beta}/\sqrt{2},\ 0\leqslant \beta < \pi$. Then $\{b/\bar{a}, -a/\bar{b}\}=\{ \pm \mathrm{e}^{\ii\beta} \}$ is invariant under $z\mapsto 1/z$.

a). $1/\mathrm{e}^{\ii\beta} = -\mathrm{e}^{\ii\beta}$. Then $\beta=\pi/2$. We have
$$ \phi_1 = \frac{z+\ii}{\ii z +1}. $$

b).$1/\mathrm{e}^{\ii\beta} = \mathrm{e}^{\ii\beta}$. Then $\beta=0$. We have
$$ \phi_2 = \frac{z+1}{-z+1}. $$

So if $G_\Gamma/\Otwo$ is non-trivial, $\Gamma\cong K4$ and there will be 2 non-trivial elements $[\phi_1], [\phi_2]\in G_\Gamma/\Otwo$ given above. The reverse automatically holds by Step (4).
\end{proof}

In $\SOthr$, $\phi_1, \phi_2$ are the $\pi/2$-rotation about the $x,y$-axis, and $\gamma_0$ is the $\pi$-rotation about the $z$-axis. We can restate the result in different languages. 
\begin{corollary}\label{cor:special_D} Let $K\subset\PSUtwo$ and $\phi_1, \phi_2 \in \PSUtwo$ be defined as above, and $\Sigma$ be a strict dihedral surface.
\begin{enumerate}
    \item  $K, \phi_1^{-1} K \phi_1, \phi_2^{-1} K \phi_2$ are three subgroups of $\Otwo$ which are conjugate in $\PSUtwo$ but not in $\Otwo$.
    \item Let $\Sigma_K := \CC\cup\{\infty\}/ K$. Geometrically, $\Sigma_K$ is a genus zero spherical surface with three cone points of angle $\pi$. Then $\#\{ G_\Gamma/\Otwo \}=3$ if and only if $\Sigma$ admits a branched cover to $\Sigma_K$. 
\end{enumerate}
\end{corollary}
\begin{proof}
    (1). This is a direct corollary from the proof above.

    (2). If $\#\{ G_\Gamma/\Otwo \}=3$, then $\rho(\pi_1)\cong K$. 
    By the $\rho$-equivariance of the developing map $D:\hat{\Sigma}\to\STwo$, $D$ reduces to a map from $\Sigma$ to $\Sigma_K$, mapping the cone points to cone points of $\Sigma_K$. It is a covering map outside the orbifold point of $\Sigma_K$ since $D$ is locally isometric on $\tilde{\Sigma}'$.
    Conversely, note that $\Sigma_K$ admits 3 different foliated surfaces. Each cone point of $\Sigma_K$ can be the pole of the foliation and the other two cone points are the zeros with index $+1$. If there is a branched cover $\hat{D}:\Sigma\to\Sigma_K$, the pullback foliations give the 3 different foliated surface of $\Sigma$.
\end{proof}

\subsection{Connection matrix}
There are as many linear equations on the widths of bigons as the number of poles. The connection matrix helps us detect their linearly independence. 
The idea is the same as \cite{Li19} used for co-axial surfaces.

Let $(\Sigma;F,G)$ be a generic foliated surface, with $m$ poles $\{q_1,\cdots, q_m\}$ of $F$. Assume its strip decomposition consists of $b$ bigons $\{B_1, \cdots, B_b\}$.
\newcommand{\mtxM}{\mathbf{M}}
\begin{definition}
\label{def:cnnct_mtr}
    The \DEF{connection matrix} \SYM{$\mtxM$} of $(\Sigma;F,G)$ is an $m \times b$ matrix with all entries in $\{0,1,2\}$, determined by the following:
    \begin{itemize}
        \item if both vertices of bigon $B_j$ are glued to one pole $q_i$, then $\mtxM_{ij}:=2$ ;
        \item if vertices of $B_j$ are glued to different poles $q_i\neq q_{i'}$, then $\mtxM_{ij}=\mtxM_{i'j}:=1$;
        \item all the other entries are $0$.
    \end{itemize}
\end{definition}

Since each bigon has exactly two vertices, each column of $\mtxM$ contains 1 or 2 non-zero entries, with total sum 2.
Let $\vec{C}:=(c_1, \cdots, c_m)^T$ be the column vector recording cone angles at poles $q_i$ on $\Sigma$, and $\vec{W}:=(w_1, \cdots, w_b)^T$ be the vector recording widths of $B_j$. Then the constrain on the widths of bigons is given by
\begin{equation}\label{eq:width_constrain}
    \vec{C} = \mtxM  \vec{W}.
\end{equation}

\newcommand{\rank}{\mathrm{rank}}
\newcommand{\rankM}{\mathrm{rank}(\mtxM)}
\begin{proposition}\label{prop:rank} For generic foliated surface $(\Sigma;F,G)$, 
\begin{enumerate}
    \item $\rankM \geq m-1$;
    \item the foliation pair $(F,G)$ is orientable if and only if $\rankM = m-1$.
\end{enumerate} \end{proposition}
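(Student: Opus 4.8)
The plan is to interpret the matrix equation $\vec{C}=\mtxM\vec{W}$ combinatorially via an incidence graph, and to read off the rank from the connectivity of that graph. First I would build a bipartite-type incidence structure: introduce one ``pole node'' for each $q_i$ ($i=1,\dots,m$) and one ``bigon edge'' for each $B_j$, where $B_j$ is attached to the pole(s) to which its two vertices are glued. Since each bigon has exactly two vertices, each column of $\mtxM$ corresponds to an edge (a genuine edge between $q_i$ and $q_{i'}$ when the vertices go to distinct poles, a loop at $q_i$ with weight $2$ when both vertices go to the same pole). Thus $\mtxM$ is, up to the weight-$2$ convention on loops, the (unsigned) vertex-edge incidence matrix of a graph $\mathcal{G}$ on the vertex set $\{q_1,\dots,q_m\}$. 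The graph $\mathcal{G}$ is connected because $\Sigma$ is connected: the bigons of the strip decomposition cover $\Sigma$ and are glued along singular leaves through the poles, so any two poles are joined by a chain of bigons.

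For part (1), I would invoke the standard fact that the rank of the unsigned incidence matrix of a connected graph on $m$ vertices is $m-1$ if the graph is bipartite and $m$ otherwise; here one must be slightly careful because our ``incidence matrix'' has entries $2$ on loop columns rather than the usual convention. A loop column contributes a vector $2e_i$, which adds the relation $e_i\in\operatorname{colspan}$, hence immediately forces rank $m$ if any loop is present. If there are no loops, $\mtxM$ is literally the unsigned incidence matrix of the connected graph $\mathcal{G}$, whose rank is $m-1$ or $m$ according to bipartiteness. Either way $\rankM\geq m-1$, which is (1). The key elementary input is that the left kernel of the unsigned incidence matrix is at most one-dimensional for a connected graph: a row-vector $\vec{v}^T$ with $\vec{v}^T\mtxM=0$ must satisfy $v_i+v_{i'}=0$ across every edge, which along a connected graph determines all $v_i$ up to one sign choice, and exists (nonzero) precisely when the graph is bipartite.

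For part (2) I need to match ``$(F,G)$ orientable'' with ``$\mathcal{G}$ bipartite and loop-free'', i.e. $\rankM=m-1$. The orientability of the foliation pair is governed by the holonomy in $\pm1$: $(F,G)$ is orientable iff the longitude foliation has a consistent choice of transverse coorientation, equivalently iff there is no loop in $\Sigma$ along which the developing map's local branch picks up the reflection $z\mapsto 1/z$ (a ``latitude flip''). Crossing a singular leaf of $G$ from one bigon to an adjacent one either preserves or reverses this coorientation, and a pole vertex is exactly where consecutive bigons around it alternate in orientation type. Concretely: assign to each bigon a sign $\pm1$ (its local orientation of $G$); consistency around a pole $q_i$ forces all bigons meeting $q_i$ to have signs constrained by the edge relations of $\mathcal{G}$; a consistent global assignment exists iff $\mathcal{G}$ is bipartite, and a loop at $q_i$ (both vertices of one bigon at the same pole) forces an immediate sign contradiction, which corresponds geometrically to a latitude-flipping loop and hence non-orientability. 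So $(F,G)$ orientable $\iff$ $\mathcal{G}$ bipartite and loop-free $\iff$ the left kernel of $\mtxM$ is one-dimensional $\iff \rankM=m-1$.

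The main obstacle I anticipate is pinning down precisely the dictionary in part (2) between the combinatorial sign condition (2-colorability of $\mathcal{G}$) and the geometric orientability of $(F,G)$ as measured foliations on $\Sigma$ --- in particular handling the case of a bigon both of whose vertices are the same pole, and checking that the $\mathbb{Z}_2$-holonomy of the longitude foliation really is computed by walking through the strip decomposition in exactly this way (no hidden monodromy contribution from the zeros of $F$, which sit in the interiors of bigons or on their glued edges). Once that dictionary is established, everything else is the standard linear-algebra fact about incidence matrices of connected (bipartite) graphs, suitably adjusted for the weight-$2$ loop columns.
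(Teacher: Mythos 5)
Your reformulation in terms of the unsigned incidence matrix of the pole--bigon graph $\mathcal{G}$ is correct, and it is in substance the same argument the paper carries out from scratch: the paper's preliminary lemma (path-connectedness of $\mtxM$) is exactly the statement that $\mathcal{G}$ is connected, and the paper's rank computation is the incidence-matrix rank fact that you invoke as known. The main advantage of your packaging is that it lets you appeal to the textbook result ``connected, loop-free $\Rightarrow$ rank is $m-1$ if bipartite, $m$ if not,'' rather than redoing the linear algebra; the paper does the latter and, in the process, explicitly exhibits the unique left-kernel vector $\vec\lambda$, which is then reused to \emph{construct} the orientation of $F$ in the converse direction of part (2).

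Two things you should tighten. First, connectedness of $\mathcal{G}$ is \emph{not} automatic from connectedness of $\Sigma$: for a non-generic strip decomposition, two bigons can be glued along a boundary segment whose endpoints are both zeros (marked points), in which case they do not share a pole and your edge chain breaks. The paper isolates this as a lemma and explicitly points out that connectedness can fail in the non-generic case. You need to say you are working with a generic $(\Sigma;F,G)$ (which is where the connection matrix is actually defined) and use the generic property --- each boundary arc of a bigon carries exactly one marked point, so every gluing segment runs from a vertex to a marked point --- to conclude that adjacent bigons share a pole. Relatedly, your claim that a loop column $2e_i$ ``immediately forces rank $m$'' also uses connectedness: it gives $e_i$ in the column space, and you then need the chain-of-edges argument to reach every other $e_j$.

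Second, in part (2) the sign assignment should live on the \emph{vertices} of $\mathcal{G}$ (the poles), not on the bigons/edges: the $\mathbb{Z}_2$-datum that matches bipartiteness is ``each pole is a source or a sink of the transverse flow of $F$,'' and each bigon joins a source to a sink. Assigning a local orientation of $G$ to each bigon is downstream of that (it is determined by the two endpoint colors once the coloring exists), and phrasing it edge-first makes the equivalence with 2-colorability look circular. The paper makes this clean by defining $\lambda_i = \pm 1$ according to whether $q_i$ lies on the left or the right of $F$, then reading the bipartition off as the relation $\sum_i \lambda_i \vec{M}_i = 0$; conversely, any left-kernel vector is shown (using that column sums are $2$, there is no entry $2$, and $\mathcal{G}$ is connected) to take values $\pm 1$ and to produce a consistent orientation. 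You identify this dictionary as the ``main obstacle,'' and indeed it is exactly the content you would need to write out; once you switch to coloring poles, the argument goes through as in the paper.
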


The key to the proof of Proposition \ref{prop:rank} is the connectedness of $\mtxM$ observed in \cite{Li19}. We include a proof here for convenience.
\begin{lemma}
    For generic $(\Sigma;F,G)$, its connection matrix $\mtxM$ is path-connected in the following sense: for any two non-zero elements $\mtxM_{ij}, \mtxM_{kl}$, there exists a finite sequence $(i_0,j_0), (i_1,j_1), \cdots, (i_N, j_N)$ such that
    \begin{itemize}
        \item $(i_0,j_0)=(i,j), (i_N, j_N)=(k,l)$;
        \item $(i_{t-1},j_{t-1}), (i_{t},j_{t})$ have exactly one same component for all $t=1,\cdots,N$;
        \item all entries $\mtxM_{(i_{t},j_{t})}$ are non-zero.
    \end{itemize}
\end{lemma}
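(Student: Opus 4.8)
The plan is to prove the path-connectedness of the connection matrix $\mtxM$ by translating the combinatorial statement into a statement about the connectedness of the surface $\Sigma$ itself. The key observation is that non-zero entries of $\mtxM$ are in bijective correspondence with incidences ``pole $q_i$ is a vertex of bigon $B_j$'', and such an incidence is exactly a flag of the strip decomposition. Since $\Sigma$ is connected and is obtained by gluing the closed bigons $B_1,\ldots,B_b$ along boundary leaves of $G$, the combinatorial data of the decomposition forms a connected structure. I will extract path-connectedness of $\mtxM$ from this.

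First I would set up the correspondence precisely. A non-zero entry $\mtxM_{ij}$ records that at least one vertex of $B_j$ is glued to $q_i$ (the value $2$ meaning both vertices, the value $1$ meaning exactly one). Call two non-zero entries $\mtxM_{ij}$ and $\mtxM_{i'j'}$ \emph{adjacent} if they share a row ($i=i'$, i.e.\ two bigons meeting at a common pole) or a column ($j=j'$, i.e.\ the two vertices of one bigon are glued to two poles, necessarily distinct if the entries are distinct). The claim is that the ``incidence graph'' with these vertices and adjacencies is connected.

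Next I would argue connectedness. Fix a bigon $B_1$; I claim every other bigon $B_l$ can be reached from $B_1$ by a chain of bigons, consecutive ones sharing a vertex (a pole). Consider the set $S$ of bigons reachable from $B_1$ in this sense, and let $U$ be the union of the closed bigons in $S$ inside $\Sigma$. If $S$ is not all of the bigons, then $U$ is a proper closed subset whose complement is also closed; since $\Sigma$ is connected, $U$ and its complement must share a boundary point. A boundary point shared by a bigon in $S$ and a bigon not in $S$ lies on $\SingG$, and by Lemma~\ref{lem:Sing_G} and the structure of the strip decomposition, such a boundary point either lies on a vertical leaf (so the two bigons are glued along a $G$-leaf segment, but then they already share their endpoint vertices in $Q$, forcing the outside bigon into $S$) or is one of the vertices in $Q$ itself (again forcing membership in $S$). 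Either way we reach a contradiction, so $S$ is everything. This shows that for any two bigons $B_j, B_l$ there is a chain $B_j = B_{j_0}, B_{j_1}, \ldots, B_{j_N} = B_l$ with $B_{j_{t-1}}$ and $B_{j_t}$ sharing a pole $q_{i_t}$; the corresponding entry sequence $(i_1,j_0),(i_1,j_1),(i_2,j_1),(i_2,j_2),\ldots$ (inserting the shared-pole index between consecutive shared-bigon indices) is a valid path in $\mtxM$. Finally, to connect two arbitrary non-zero entries $\mtxM_{ij}$ and $\mtxM_{kl}$ rather than just ``bigon to bigon'', I note that $\mtxM_{ij}\neq 0$ means $q_i$ is a vertex of $B_j$, so $(i,j)$ is adjacent to the ``base point'' of the chain starting at column $j$, and similarly for $(k,l)$; splicing gives the desired path.

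The main obstacle I anticipate is handling the degenerate-looking cases cleanly: a bigon whose two vertices are glued to the \emph{same} pole (the entry $2$ case), and making sure that ``sharing a pole'' genuinely propagates connectedness through $\SingG$ without circular reasoning. The subtle point is that a component of $\Sigma - \SingG$ is an \emph{open} bigon, and two adjacent open bigons need not share a full boundary leaf—they are glued along subsegments of leaves that are portions of $\SingG$, whose endpoints are zeros of $G$. I will need to use genericity (Definition~\ref{def:generic_foliated}) to control how many marked points sit on each bigon boundary, and the fact that every leaf of $\SingG$ eventually terminates in $Q$ by Lemma~\ref{lem:Sing_G}, so that tracing along $\SingG$ from any interface between bigons reaches a common pole-vertex. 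Once this tracing argument is made rigorous, the rest is a routine graph-connectedness bookkeeping.
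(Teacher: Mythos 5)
Your proof is correct and rests on exactly the same two ingredients as the paper's: genericity forces two bigons glued along a boundary leaf segment to share a pole vertex, and connectedness of $\Sigma$ propagates this adjacency globally. The only substantive difference is in how connectedness is invoked. The paper picks a path $\sigma$ on $\Sigma$ from an interior point of one bigon to an interior point of the other, \emph{avoiding all singularities of $F$}; then every crossing from one bigon into the next happens through the interior of a single boundary leaf segment, and the shared pole is immediate. Your closed-set covering argument (let $U$ be the union of closed bigons in $S$, $V$ the union of closed bigons not in $S$) can land the witness point of $U\cap V$ on a marked point (a zero of $G$), where the two bigons need not share a boundary leaf segment directly; this is precisely the complication you flag at the end. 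It is repairable: the sectors around a zero are cyclically ordered and adjacent sectors share a leaf segment (hence a pole), so somewhere in that cycle an $S$-sector meets a non-$S$-sector across a leaf segment, giving the contradiction. But the paper's path trick sidesteps the case entirely, which is why it reads shorter. One small slip to correct in your write-up: $\Sigma-U$ is open, not closed; the contradiction you actually need is that $U$ and $V$ are two non-empty closed sets covering the connected $\Sigma$, hence they intersect. The final bookkeeping (splicing row-steps and column-steps into a sequence in $\mtxM$) is fine, and also implicitly done by the paper.
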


\begin{proof}
    This is related to the connectedness of the surface. Up to a re-labeling of the poles and bigons, we may assume $\mtxM_{11}, \mtxM_{kl}\neq 0$ and $k, l \neq 1$. 
    Choose a path $\sigma$ on $\Sigma$ from an interior point in $B_1$ to an interior point in $B_l$, avoiding all singularity of $F$. Then consider all the bigons, assumed to be $B_{j_1}=B_1, B_{j_2}, \cdots, B_{j_L}=B_l$, along the path $\sigma$. Whenever $\sigma$ goes into $B_{j_{i+1}}$ from $B_{j_i}$, these two bigons glued to a common pole because the foliated surface is generic and each boundary segment ends at a pole. Then there exists a desired sequence among the columns of $B_{j_1},\cdots, B_{j_L}$.
\end{proof}

\textbf{Remark.} The connection matrix in \cite{Li19} is always connected, while ours may not be connected for non-generic surfaces. These two matrices encode ``dual'' information: Li's is obtained from annulus decomposition while ours is induced by strip decomposition. 

\begin{proof}[Proof of Proposition \ref{prop:rank}. ] ~

    (1). We show that if $\rankM < m-1$, then $\mtxM$ is not path-connected.
    In that situation, any $m-1$ rows of $\mtxM$ are linearly dependent. Up to re-labeling, we may assume that there exists non-zero $\lambda_i$ for $i=1,\cdots , r$ such that
    $r\leq m-1$ and $\sum_{i=1}^{r} \lambda_i \vec{M}_i = 0$,
    where $\vec{M}_i$ is the $i$-th row of $\mtxM$.

    Now pick any non-zero entry $\mtxM_{xy}$ on the last $m-r$ row (i.e., $r< x \leq m$). Such entry exists because any pole has positive cone angle. Note that the total sum of each column is 2. If $\mtxM_{xy}=2$, then $\mtxM_{iy}=0$ for all $i\neq x$. If $\mtxM_{xy}=1$, then there exists a uniqe $x'\neq x$ with $\mtxM_{x' y}=1$, and $\mtxM_{iy}=0$ for all $i\neq x, x'$.
    However, the first $r$ rows are linearly dependent so $\sum_{i=1}^{r} \lambda_i \mtxM_{iy} =0$, with all $\lambda_i\neq0$. Thus $x'>r$.

    So for all $j=1,\cdots, b$, if $1\leq i \leq r$ and $r+1 \leq i' \leq m$, them $\mtxM_{ij}, \mtxM_{i'j}$ can not be simultaneously non-zero. This means the entries in the first $r$ rows and ones in the last $m-r$ rows are not path-connected, a contradiction.

    \bigskip
    (2). We show that the orientation induces a linear equation. Note that as a pair of topological foliation induced by some meromorphic differentials, $F,G$ must be orientable or non-orientable simultaneously.

    When $(F,G)$ is orientable, the two vertices of each bigon glues to different poles, and all non-zero entries of $\mtxM$ is $1$. Pick one orientation of $F$. Then for each $B_j$, we can tell which vertex lies on the left or right of $F$, and it is well-defined all over the surface. Define $\lambda_i=+1$ if $q_i$ lies on the left of $F$, otherwise define $\lambda_i=-1$. Then 
    \[ \sum_{i=1}^m \lambda_i \vec{M}_i =0, \] 
    since each bigon is glued to exactly one pole on the left and one pole on the right. So $\mtxM$ is not full-rank, then $\rankM$ must be $m-1$.

    \medskip
    Conversely, if $\rankM= m-1$, then there exists non-zero $\lambda_i$ such that $\sum_{i=1}^m \lambda_i \vec{M}_i =0$. As before, no entry in $\mtxM$ equals to $2$, otherwise it is the only non-zero entry on that column and the coefficient $\lambda_i$ for that row must be 0. Then for each column there is exactly two non-zero entry, both equal to 1. The coefficients $\lambda_i$ for that two rows must be opposite each other. By the connectedness of $\mtxM$, all $\lambda_i$ have the same absolute value. Then we shall assume that $\lambda_i=\pm1$.

    An orientation of $F$ can be given as the following. On each bigon, orient the restriction of $F$ so that the pole with positive $\lambda_i$ lies on the left. This orientation matches piece by piece, since all bigons glued to the same poles are recorded in one row, with the same coefficient. And the linear equation guarantees that the two poles to which a bigon is glued lie on different side of $F$.
\end{proof}

\begin{example}
    The connection matrices, together with the equations (\ref{eq:width_constrain}), for the two strip decomposition in Example \ref{eg:three_cone_sphere} are
    \[
    \begin{pmatrix}    1/2  \\ 1/2  \\ 1 \\ 1    \end{pmatrix}
    = \begin{pmatrix}
       1 &   &   \\
         &   & 1 \\
       1 & 1 &   \\
         & 1 & 1 \\
    \end{pmatrix}
    \begin{pmatrix}    1/2  \\ 1/2  \\ 1/2    \end{pmatrix}
    \quad \text{ and } \quad
    \begin{pmatrix}    1  \\ 1  \\ 1     \end{pmatrix}
    = \begin{pmatrix}
       2 & 1 &   &  \\
         & 1 & 1 &  \\
         &   & 1 & 2\\
    \end{pmatrix}
    \begin{pmatrix}    \theta  \\ 1-2\theta  \\ 2\theta  \\ 1/2-\theta  \end{pmatrix}.
    \]
    ~ \hfill $\square$
\end{example}

\subsection{Dimension counting for strict dihedral case}
Now we are ready to count the dimension of $\MsphgnDT$.

\begin{theorem}\label{thm:dim_count_DT}
    Let $\vec{\theta}:=(\theta_1, \cdots, \theta_n) \in \RR^n_+$ be an angle vector with $\theta_i\neq1$ and $\vec{T}=( E, O, N )$ be a type partition with $ \#E=n_E, \#O=n_O, \#N=n_N $.
    If $\MsphgnDT$ is non-empty, then its real dimension is 
    $$ 2n_E+n_O+2g-2. $$
\end{theorem}
\begin{proof}
    We first prove the statement for $\MsphgnDFT$, then project it to $\MsphgnDT$.

    \textbf{(Step 1).} By Corollary \ref{cor:low_dim}, to count the real parameters for $\MsphgnDFT$, we only need to consider generic foliated surfaces. 
    Let $(\Sigma;F,G)\in \MsphgnDFT$ be a generic foliated surface. Suppose that there are $b$ bigons in the strip decomposition, and $m\geq0$ extra poles other than the cone points.


    The strip decomposition can be viewed as a cell decomposition of $\Sigma$, and each bigon is a topological quadrilateral. By computing the Euler characteristic we have
    \[ 2-2g = (n_E + n_O + n_N + m) - 2b + b. \]
    So $b = m+n+(2g-2) $. 
    Every zero of even index provides one parameter, namely its absolute latitude. Every zero of odd index must lie on the equatorial net (Definition \ref{def:singular_info}), not providing new parameter.

    The widths of the $b$ bigons are the other real parameters, constrained by $n_N+m$ linear equations $\vec{C}=\mtxM \vec{W}$, where $\mtxM$ is the $(n_N+m)\times b$ connection matrix. By Proposition \ref{prop:rank}, for strict dihedral surface, $\mtxM$ is full-rank. Then the dimension of the solution space of the width vector $\vec{W}$ is
    \[ b - \rankM = \big(m+n+(2g-2)\big) - \big( n_N+m \big) = n - n_N + 2g-2. \]
    Together with the parameters from the zeros of even index, the total dimension is
    \[ n_E + (n - n_N + 2g-2) = 2 n_E + n_O + 2g-2. \]

    \textbf{(Step 2).} By Proposition \ref{prop:id_SDhd}, the fibers of $p_T:\MsphgnDFT \to \MsphgnDT$ is always finite, so the projection $p_T$ preserves dimension. Since generic foliated surfaces form the top-dimensional subset of $\MsphgnDFT$ by Corollary \ref{cor:low_dim}, the real dimension of $\MsphgnDT$ is the same as $\MsphgnDFT$.    
\end{proof}

The maximal dimension of $\MsphgnD$ is defined as the maximal dimension of $\MsphgnDT$ among all possible type partition $\vec{T}$.

\begin{definition}
    Given an angle vector $\vec{\theta}$, the \DEF{maximal type partition} $\vec{T}_0$ is defined to be the type partition $\{E_0,O_0,N_0\}$ such that
    \[ T := \sum_{i\in E_0\sqcup O_0} \theta_i \] 
    is integer and maximal.
    Denote $\#E_0= M_E ,\#O_0=M_O, \#N_0=M_N$. Then $M_O$ must be even. We use upper case $M$ for maximal type partition and lower case $n$ for general type partition.
\end{definition}
For the maximal type partition, all indexes with integer angle is in $E_0$. If the total number of half-integers is even, all their indexes belong to $O_0$. If the number is odd, $O_0$ contains all but the index of the smallest half-integer.
$T$ is called the \DEF{maximal integral sum} in \cite{GT23}. A necessary condition for the existence of dihedral surface, called \DEF{strengthened Gauss-Bonnet inequalities}, is given in terms of $T$ and $g,n$ there.

\begin{theorem}\label{thm:dim_count_D}
    Let $\vec{\theta}:=(\theta_1, \cdots, \theta_n) \in \RR^n_+$ be an angle vector with $\theta_i\neq1$, and let $\vec{T}_0$ be the maximal type partition as above. 
    Then whenever the moduli space is non-empty, the maximal dimension of $\MsphgnD$ is
    \[ 2M_E+M_O+2g-2. \]
\end{theorem}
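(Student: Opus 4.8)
The plan is to reduce the statement to Theorem \ref{thm:dim_count_DT} by means of the identification lemma, and then to optimize the quantity $2n_E+n_O+2g-2$ over all type partitions. First I would note that every strict dihedral surface $\Sigma\in\MsphgnD$ admits at least one pair of latitude--longitude foliations $(F,G)$ (Definition \ref{def:LL_foli}); recording which cone points become zeros and which become poles of $F$ yields a type partition $\vec{T}$ with $\Sigma\in\MsphgnDT$, where the constraints $\theta_i\in\NN_{>1}$ for $i\in E$ and $\theta_j\in\frac12+\NN$ for $j\in O$ come from Table \ref{tab:singularity}. Hence $\MsphgnD=\bigcup_{\vec{T}}\MsphgnDT$ is a finite union. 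By the identification lemma (Proposition \ref{prop:id_SDhd}), for each $\vec{T}$ the projection $p_T\colon\MsphgnDFT\to\MsphgnDT$ of diagram \eqref{eq:moduli_spaces_D} is surjective with fibers of cardinality at most $3$ (since $p_T^{-1}(\Sigma)\subseteq p^{-1}(\Sigma)$), so $\dim\MsphgnDT=\dim\MsphgnDFT=2n_E+n_O+2g-2$ whenever that stratum is non-empty, by Theorem \ref{thm:dim_count_DT}. Therefore
\[ \dim\MsphgnD=\max\bigl\{\,2n_E+n_O+2g-2 \ \bigl\rvert\ \vec{T}\text{ a type partition with }\MsphgnDT\neq\emptyset\,\bigr\}. \]

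For the upper bound, $n_E\leqslant M_E$ is immediate since $E$ can only contain indices with integer angle. For $n_O$ I would argue it is even whenever $\MsphgnDT\neq\emptyset$: a half-integer cone point that is a zero of $F$ lies on the equatorial net, so the monodromy of a small loop around it is a $\pi$-rotation about an equatorial axis, an element of $\Otwo\setminus\Uone$; all other cone points and all poles have monodromy in $\Uone$ (trivial, or a rotation about the pole axis). In the surface-group relation $\prod_{k=1}^{g}[a_k,b_k]\prod_{j=1}^{n}c_j=1$ the commutators map to $0$ under $\Otwo\to\Otwo/\Uone\cong\ZZ_2$, so the number of loops whose monodromy lies in the nontrivial coset --- exactly the half-integer zeros --- is even. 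Hence $n_O\leqslant M_O$ and $2n_E+n_O+2g-2\leqslant 2M_E+M_O+2g-2$.

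For the lower bound I must show the maximal type partition $\vec{T}_0$ gives a non-empty stratum whenever $\MsphgnD\neq\emptyset$. Here I would invoke Gendron--Tahar \cite{GT23}: their strengthened Gauss--Bonnet inequalities, phrased in terms of the maximal integral sum $T=\sum_{i\in E_0\sqcup O_0}\theta_i$ and of $(g,n)$, are at once the existence criterion for strict dihedral surfaces with angle vector $\vec\theta$ and the existence criterion for the stratum $\MsphDT{g}{n}{\vec\theta}{\vec{T}_0}$ of maximal type. Thus $\MsphgnD\neq\emptyset$ forces $\MsphDT{g}{n}{\vec\theta}{\vec{T}_0}\neq\emptyset$, and the displayed formula gives $\dim\MsphgnD\geqslant 2M_E+M_O+2g-2$. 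Combining the two bounds completes the proof.

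I expect the main obstacle to be this last step: carefully matching the strata indexed by foliation type against Gendron--Tahar's case-by-case classification, so as to confirm that the $\vec{T}_0$-stratum is non-empty in precisely the cases where $\MsphgnD$ is, including the low-complexity exceptional values of $(g,n)$ and the delicate situation when $\vec\theta$ has few non-integer entries. The parity argument for $n_O$ is a secondary technical point, short once the monodromy placement of Table \ref{tab:singularity} is available.
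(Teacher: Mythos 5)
Your proof is correct and follows the paper's strategy: stratify $\MsphgnD$ by foliation type, use the identification lemma (Proposition~\ref{prop:id_SDhd}) to control the fibers of $p_T$, apply the stratum dimension formula from Theorem~\ref{thm:dim_count_DT}, and invoke Gendron--Tahar for non-emptiness of the $\vec{T}_0$ stratum. What you add beyond the paper is the upper bound: the paper's proof of Theorem~\ref{thm:dim_count_D} only records that $\MsphDT{g}{n}{\vec\theta}{\vec{T}_0}$ is non-empty with the claimed dimension, leaving it implicit that no other type partition can yield a larger value of $2n_E+n_O+2g-2$. Your parity argument supplies exactly that missing justification, and it is sound: by Table~\ref{tab:singularity}, the cone points indexed by $O$ are half-integer zeros lying on the equatorial net, so the monodromy of a small loop around each is an odd power of a $\pi$-rotation about an equatorial axis, landing in the non-trivial coset of $\Otwo/\Uone\cong\ZZ_2$; loops around points in $E$ give even powers or trivial monodromy, and loops around points in $N$ give rotations about the polar axis, all in $\Uone$. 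Composing $\rho$ with $\Otwo\to\ZZ_2$ kills the commutators in the surface-group relation, forcing $n_O\equiv0\pmod2$, hence $n_O\leqslant M_O$, which together with the obvious $n_E\leqslant M_E$ confirms that $\vec{T}_0$ maximizes the stratum dimension. This is a genuine and useful completion of the paper's terse argument.
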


\begin{proof}
    In \cite[Section 5.1, Section 6.2]{GT23}, it is shown case by case that whenever $\MsphgnD$ is non-empty, one can find a strict dihedral surface with maximal type partition $\vec{T}_0$.
    So $\MsphDT{g}{n}{\vec{\theta}}{\vec{T}_0}$ is always non-empty, with the desired dimension.
\end{proof}

\subsection{Dimension counting for co-axial case}
Similar to the strict dihedral case, we start from $\MsphgnCFT$.
\begin{lemma}\label{lem:dim_count_CFT}
    Let $\vec{\theta}:=(\theta_1, \cdots, \theta_n) \in \RR^n_+$ be an angle vector with $\theta_i\neq1$ and $\vec{T}=( E, \emptyset, N )$ be a type partition with $ \#E=n_E, \#N=n_N $.
    If $\MsphgnCFT$ is non-empty, then its real dimension is 
    $ 2n_E+2g-1 $.
\end{lemma}
\begin{proof}
    This is almost identical to (Step 1) of Theorem \ref{thm:dim_count_DT}. By Corollary \ref{cor:low_dim} we only need to consider generic foliated surfaces. Suppose there are $b$ bigons in the strip decomposition of a generic surface, and there are $m$ poles in total, then $2-2g=(n_E+m)-2b+b$ and $b=n_E+m+2g-2$.
    By Proposition \ref{prop:rank}, the rank of connection matrix here is $m-1$. So the dimension of the solution space of width vector is $b-\rankM = (n_E+m+2g-2)-(m-1) = n_E+2g-1$. Together with the parameter provided by the $n_E$ zeros of even index, the total dimension is $n_E+(n_E+2g-1)$.
\end{proof}

The main difficulty of co-axial case lies in the study of vertical arrows in (\ref{eq:moduli_spaces_C}).
A developing map is determined by its behavior on an open set. Thus whenever $N\neq \emptyset$ in the type partition $\vec{T}$, the projection $p_T$ is injective. So we only need to consider the case where $N=\emptyset$. For co-axial surfaces, this implies $\theta_i\in\NN_{>1}$ for all $i$. 

\begin{lemma}[Identification lemma for co-axial surfaces]\label{prop:id_CoAx} ~ \\
Suppose $N=\emptyset$ in the type partition.
    \begin{enumerate}
    \item If the monodromy group of $\Sigma$ is trivial, then $p_T^{-1}(\Sigma)$ is a two-dimensional fiber in $\MsphgnCFT$.
    \item If the monodromy group of $\Sigma$ is not trivial, then the foliated surface in $\MsphgnCFT$ with orientable $(F,G)$ is unique.
    \end{enumerate}
\end{lemma}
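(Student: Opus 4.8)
The plan is to run the same machinery as the strict dihedral identification lemma (Proposition~\ref{prop:id_SDhd}), but testing conjugacy into $\Uone$ instead of $\Otwo$. I would fix a representative surface $\Sigma$ and one orientable foliated structure on it coming from a developing map $D$ with monodromy $\Gamma:=\rho_D(\pi_1)$; after the normalization of Subsection~\ref{subsec:iden} one has $\Gamma\subset\Otwo$, and since $\Uone$ is precisely the subgroup of $\Otwo$ preserving the orientations of the latitude circles and of the meridians, orientability of $(F,G)$ is equivalent to $\Gamma\subset\Uone$. As in the strict case, the orientable foliated structures over $\Sigma$ are then in bijection with $G_\Gamma/\Otwo$, where
\[ G_\Gamma:=\{\phi\in\PSUtwo \mid \phi^{-1}\Gamma\phi\subset\Uone\} \]
is a union of right $\Otwo$-cosets because $\Uone\trianglelefteq\Otwo$; and $p_T^{-1}(\Sigma)$ is the quotient of $G_\Gamma/\Otwo$ by the group $\mathrm{Aut}(\Sigma)$ of label-preserving isometries of $\Sigma$. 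A preliminary observation I would record is that $\mathrm{Aut}(\Sigma)$ is finite when $2g-2+n>0$: $\mathrm{Isom}(\Sigma)$ is a compact Lie group, and a positive-dimensional connected subgroup would force $\Sigma$ to be a round sphere, a flat torus, or a spherical surface of revolution, none of which can carry finitely many labeled cone points with $2g-2+n>0$; hence quotienting by $\mathrm{Aut}(\Sigma)$ never changes dimensions.

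For part (1), assume $\Gamma=\{\mathrm{id}\}$ — which can occur exactly because $N=\emptyset$ forces all $\theta_i\in\NN_{>1}$, so that $D$ is an honest finite branched covering $\Sigma\to\STwo$. Then for every $\phi\in\PSUtwo$ the developing map $\phi^{-1}\circ D$ has trivial, hence co-axial and orientation-preserving, monodromy, so $G_\Gamma=\PSUtwo$. The right coset $\phi\Otwo$ is determined by, and determines, the axis $\phi(\{N,S\})\subset\STwo$ (the pair of poles seen from $\Sigma$ through $D$), so $G_\Gamma/\Otwo=\PSUtwo/\Otwo$ is the $2$-dimensional space of axes of $\STwo$. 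I would then restrict to the axes for which no $D(p_i)$ is a pole or lies on the equator; by Table~\ref{tab:singularity} this is exactly the requirement that the foliated surface be of type $\vec{T}$ (all $p_i$ of type $E$), and it is an open dense condition, so it leaves a $2$-dimensional family, which remains $2$-dimensional after the finite quotient by $\mathrm{Aut}(\Sigma)$. This gives the claimed two-dimensional fiber.

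For part (2), assume $\Gamma\neq\{\mathrm{id}\}$, so $\Gamma\subset\Uone$ contains a rotation $\gamma_0\colon z\mapsto e^{\ii\alpha_0}z$ with $\alpha_0\notin 2\pi\ZZ$, whose only fixed points on $\STwo$ are $0$ and $\infty$. If $\phi\in G_\Gamma$, then $\phi^{-1}\gamma_0\phi\in\Uone$ is again a nontrivial rotation and so has fixed-point set $\{0,\infty\}$; but being conjugate to $\gamma_0$, it also fixes $\phi^{-1}(\{0,\infty\})$, forcing $\phi^{-1}(\{0,\infty\})=\{0,\infty\}$, i.e.\ $\phi\in\Otwo$. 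The converse inclusion $\Otwo\subset G_\Gamma$ is immediate, since $\phi^{-1}\Gamma\phi\subset\phi^{-1}\Uone\phi=\Uone$ for $\phi\in\Otwo$. Hence $G_\Gamma=\Otwo$, so $G_\Gamma/\Otwo$ is a single point and $p_T^{-1}(\Sigma)$ consists of exactly one orientable foliated surface.

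The group theory here is routine; the step I expect to require the most care is the bookkeeping in part (1). One must check that on the \emph{fixed} surface $\Sigma$ the pair $(F,G)$ depends on the axis with at most finite ambiguity — so that the $2$-parameter family of axes really produces a $2$-dimensional fiber rather than collapsing — which relies on the surjectivity and local injectivity of the branched cover $D$ together with the finiteness of $\mathrm{Aut}(\Sigma)$; and one must confirm that a cone point of integer angle avoids both the poles and the equator of a generic axis, which is the precise place where the hypothesis $N=\emptyset$ (equivalently $\theta_i\in\NN_{>1}$ for all $i$) is used, to identify the type-$\vec{T}$ locus with a full-dimensional open subset of the space of axes.
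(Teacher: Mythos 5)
Your proposal is correct, and part~(1) follows essentially the same route as the paper (branched cover, open condition on the axis so no $D(p_i)$ hits a pole, $\dim\PSUtwo/\Otwo=2$); your added exclusion of the equator is harmless but unnecessary, since by Table~\ref{tab:singularity} a cone point with $\theta_i\in\NN_{>1}$ lying on the equatorial net still contributes a zero of index $2-2\theta_i$.

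For part~(2) you take a genuinely different and cleaner route. The paper keeps the strict dihedral machinery verbatim: it works with $G_\Gamma=\{\phi:\phi^{-1}\Gamma\phi\subset\Otwo\}$, invokes steps (2)--(3) of Proposition~\ref{prop:id_SDhd} to reduce to $\Gamma\cong\ZZ_2$, and in that case does an explicit matrix computation showing $\phi^{-1}\gamma_0\phi$ lands in $\Otwo-\Uone$, so the extra conjugations yield non-orientable foliations and are excluded by Definition~\ref{def:moduli_C}. You instead encode orientability into the conjugating set from the start, testing against $\Uone$ rather than $\Otwo$, and then use the elementary fact that a nontrivial element of $\Uone$ has fixed-point set exactly $\{0,\infty\}$: a single line of fixed-point bookkeeping forces $\phi(\{0,\infty\})=\{0,\infty\}$, hence $\phi\in\Otwo$, with no case split and no computation. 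What you give up is the paper's byproduct: by allowing $\Otwo$-conjugations, the paper's argument simultaneously reveals the one-parameter family of primitive quadratic differentials inducing the same $\ZZ_2$-monodromy co-axial metric (the corollary stated right after the lemma, and illustrated by Example~\ref{eg:three_cone_sphere}). For the lemma as stated, your approach is entirely sufficient, and arguably more transparent because the role of orientability is explicit rather than being applied as a final filter. Your remark that $\mathrm{Aut}(\Sigma)$ is finite and hence dimension-neutral is a point the paper leaves implicit; recording it is sound practice, although the justification by appeal to Killing fields could be replaced by the simpler observation that a label-preserving isometry fixes the finite set of cone points, which already rules out positive-dimensional stabilizers when $2g-2+n>0$.
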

\begin{proof}
    (1). When the monodromy is trivial, the developing map induces a branched cover $D$ directly from $\Sigma$ to the Riemann sphere. Then composing any $\PSUtwo$ element on the left still pulls back the foliation. To fix type partition, the preimages of two poles must be disjoint from the cone points. 
    The choices of such elements form an open subset of $\PSUtwo$, and its quotient by $\Otwo$ is 2-dimensional.

    \medskip
    (2). We follow notations in Proposition \ref{prop:id_SDhd}, but $\Gamma=\Gamma_0\subset\Uone$ this time. By (Step 2) and (Step 3) of its proof, $G_\Gamma/\Otwo$ must be trivial if $\Gamma$ is not $\ZZ_2$ or the trivial group.

    If $\Gamma\cong \ZZ_2$, then up to a conjugation we may assume $\Gamma=\{\mathrm{id},\ \gamma_0:z\mapsto -z\}$. Let $\phi\in\PSUtwo$ as before, with $ab\neq0$. Then
    $$ \phi^{-1}\gamma_0\phi: z \mapsto \frac {(\abs{a}^2-\abs{b}^2)z + 2\bar{a}b} {2a\bar{b}z + (\abs{b}^2-\abs{a}^2)}. $$
    This is in $\Otwo$ if and only if $\bar{a}b(\abs{a}^2-\abs{b}^2)=0$. Thus $a=1/\sqrt{2}$ and $b=\mathrm{e}^{\ii\beta}/\sqrt{2},\ 0\leqslant \beta < \pi$ as before. One can check that
    $$ \phi=\frac{z+\mathrm{e}^{\ii\beta}}{-\mathrm{e}^{-\ii\beta}z+1},\quad  \phi^{-1}\gamma_0\phi=\frac{\mathrm{e}^{2\ii\beta}}{z}$$
    satisfies the requirements for all $\beta\in[0,\pi)$. 
    However, this new element lies in 
    $\Otwo-\Uone$, so the induced foliation is non-orientable. This is forbidden in $\MsphgnCFT$ by Definition \ref{def:moduli_C}.(2). So the choice of orientable $(F,G)$ is actually unique.
\end{proof}

As a corollary, if the monodromy group of $\Sigma$ is $\ZZ_2$, then this \csphmtr\ is induced by both a unique 1-form and a 1-parameter family of primitive \qdf s. This is exactly the case of Example \ref{eg:three_cone_sphere}. Combining this with Corollary \ref{cor:special_D}.(2), we can summary as the following.
\begin{corollary}
    Let $\Sigma$ be a dihedral surface. The following conditions are equivalent:\\
    \begin{itemize}
        \item $\Sigma$ admits multiple pairs of latitude and longitude foliations, or multiple strip decompositions;
        \item $\Sigma$ has monodromy group isomorphic to a subgroup of $K4$;
        \item $\Sigma$ admits a branched cover of $\Sigma_K$ defined in Corollary \ref{cor:special_D}.
    \end{itemize}
\end{corollary}

Trivial monodromy is the only obstruction of injective projection. However, the occurrence of trivial monodromy is infrequent and varies depending on the genus.
\begin{proposition}\label{prop:trivial_mono}
    Suppose $\vec{\theta}\in\ZZ_{>1}^n$ and $N=\emptyset$ in the type partition $\vec{T}$.
    \begin{enumerate}

        \item If $g=0$, then any $\Sigma\in \MsphC{0}{n}{\vec{\theta}}$ has trivial monodromy.
        \item If $g>0$, then the surfaces with trivial monodromy form a lower dimensional subset in $\MsphgnCT$.
    \end{enumerate}
\end{proposition}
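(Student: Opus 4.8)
For part 1, the approach is to exploit that the monodromy around an integer-angle cone point is automatically trivial. A neighbourhood of a cone point $p_i$ of angle $2\pi\theta_i$ develops onto a $\theta_i$-fold branched cover of a small spherical disc, so carrying a branch of the developing map $D$ once around $p_i$ winds $\theta_i\in\NN_{>1}$ full turns about $D(p_i)$ and returns to the initial branch; hence $\rho(\gamma_{p_i})$ is the rotation of $\STwo$ by the angle $2\pi\theta_i$ about $D(p_i)$, which is the identity. When $g=0$ we have $\pi_1(\Sigma\setminus P)=\pi_1(\STwo\setminus\{p_1,\dots,p_n\})$, generated by the loops $\gamma_{p_i}$ (with the single relation $\prod_i\gamma_{p_i}=1$), so $\rho$ is trivial on a generating set and the monodromy group is trivial. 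This uses neither co-axiality nor anything about $\STwo$ beyond the cone angles being multiples of $2\pi$; in effect, every genus-zero spherical surface with integer cone angles is developed by a global branched covering of the sphere.

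For part 2, the same observation shows that, for any $g$, the monodromy group $\Gamma$ of $\Sigma$ is trivial exactly when $\Gamma$ annihilates the handle generators of $\pi_1(\Sigma\setminus P)$, equivalently exactly when $D$ is single-valued. In that case $D\colon\Sigma\to\STwo$ is a branched covering, ramified precisely over the finite set $B:=D(P)$ (away from $P$ the metric is smooth, so $D$ is a local isometry, hence unramified), with local degree $\theta_i$ at $p_i$, and the metric is $D^{*}g_{sph}$. Hence a trivial-monodromy labeled surface is completely encoded by the branch configuration $B\subset\STwo$ together with the discrete combinatorial monodromy of the covering, modulo the $\SOthr$-action on $B$ and isomorphisms of covers. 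Since $|B|\le n$ and the combinatorial data is locally constant in $B$, the locus $Z\subset\MsphgnCT$ of trivial-monodromy surfaces is the image of a family of real dimension at most $2n$; quotienting by $\SOthr$ (which has finite stabilizers once $|B|\ge 3$, while for $|B|\le 2$ no connected cover has positive genus) yields $\dim Z\le 2n-3$.

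It then remains to see that $\dim\MsphgnCT>2n-3$ for $g\ge 1$. I would deduce this from the foliated side, which is logically prior to this proposition: the strip-decomposition count in the proof of Theorem \ref{thm:dim_count_DT}, transferred to the co-axial case where Proposition \ref{prop:rank}(2) forces $\mathrm{rank}(\mathbf M)=m-1$, gives that $\MsphgnCFT$ has dimension $b-(m-1)+n=(m+n+2g-2)-(m-1)+n=2n+2g-1$, independently of $m$ and of the gluing pattern, so it is equidimensional of this dimension. By the co-axial identification lemma (Lemma \ref{prop:id_CoAx}) the forgetful projection $p_T\colon\MsphgnCFT\to\MsphgnCT$ has $2$-dimensional fibres over $Z$ and finite fibres elsewhere, so $p_T^{-1}(Z)$ has dimension $\dim Z+2\le 2n-1<2n+2g-1$; thus $\MsphgnCFT\setminus p_T^{-1}(Z)$ is nonempty of full dimension and $\MsphgnCT\setminus Z$ has dimension $2n+2g-1>2n-3\ge\dim Z$, so $Z$ is a lower-dimensional subset of $\MsphgnCT$. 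Alternatively one may argue directly: for $g>0$ the annulus decomposition contains an annulus with non-separating core, and twisting about it multiplies the monodromy around a transversal loop by a nontrivial rotation, hence moves $\Sigma$ off $Z$; this exhibits $Z$ as the locus where all $2g$ absolute periods are integers, with codimension $2g$.

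The step needing genuine care rather than bookkeeping is precisely this last comparison $\dim\MsphgnCT>2n-3$: it must be obtained without invoking the final co-axial dimension formula, whose own proof uses this proposition, which is why I route it through the foliated dimension together with Lemma \ref{prop:id_CoAx}. For the deformation-theoretic alternative, the corresponding delicate point is to show that a positive-genus dihedral surface cannot have an annulus decomposition in which every annulus core is separating.
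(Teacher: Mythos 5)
Part (1) matches the paper's one-line argument (and you rightly note it uses nothing beyond the integrality of the cone angles). Part (2), however, takes a genuinely different route. The paper stays inside the strip decomposition and works homologically: it builds a cell complex from the bigons, encodes $\ln\rho$ on a canonical basis of $H_1(\Sigma)$ as $X\vec{W}\bmod\ZZ$ for an integer $2g\times b$ matrix $X$, observes that no nontrivial combination of the rows of $X$ can be a combination of the rows of $\mtxM$ (boundaries versus homology classes), and concludes that the integrality condition $X\vec{W}\in\ZZ^{2g}$ cuts the width space by codimension exactly $2g$. Your approach instead interprets the trivial-monodromy locus $Z$ as branched covers of $\STwo$: the branch locus $B=D(P)$ has $\lvert B\rvert\leqslant n$, the covering data is finite, and $\SOthr$ acts with finite stabilizer once $\lvert B\rvert\geqslant 3$ (which, as you check via Riemann–Hurwitz, is forced when $g\geqslant 1$), giving $\dim Z\leqslant 2n-3$; you then compare with the foliated dimension $2n+2g-1$ and use Lemma~\ref{prop:id_CoAx} to keep $p_T^{-1}(Z)$ strictly below full dimension and hence $Z$ lower-dimensional in $\MsphgnCT$. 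You were right to flag and avoid the circularity with Theorem~\ref{thm:dim_count_CT}: the dimension count for $\MsphgnCFT$ is indeed independent of this proposition, so your detour is legitimate. What the paper's argument buys is uniformity — the codimension-$2g$ statement is intrinsic to the constraint matrix, with no Hurwitz-type input and no need to show $\lvert B\rvert\geqslant 3$; what yours buys is a conceptually clean identification of $Z$ with a classical Hurwitz space, giving the bound $2n-3$ directly. Your ``alternative'' via twisting about a non-separating annulus core is the place that needs the most care (as you acknowledge): one must justify that a positive-genus dihedral surface always admits such an annulus, and that the twist genuinely perturbs the monodromy; the paper's matrix argument makes exactly this point rigorous, since the rows of $X$ (recording how widths feed into periods) are precisely what detect the monodromy change under twisting. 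The main route you give is sound; the twisting alternative, as written, is a sketch.
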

\begin{proof}
    (1). When $g=0$, the simple primitive closed loops around each cone point generate $\pi_1(\Sigma-P)$. Thus if all $\theta_i$ are integers, the monodromy group is trivial.

    \medskip
    (2). We show that generic foliated surfaces with trivial monodromy form a lower dimensional subset in $\MsphgnCFT$, and project the result to $\MsphgnCT$.

    \textbf{(Step 1).} Some homology preparations are needed.

    Let $(\Sigma;F,G)\in\MsphgnCFT$ be a generic foliated surface, and fix an orientation of $F$. Suppose there are $m>0$ poles of $F$ in total, and $b$ bigons in the strip decomposition. For each bigon $B_j$ in the strip decomposition, pick an interior segment $e_j$ connecting the two boundary marked points, and orient it along $F$. Each $B_j$ is cut into two half-pieces by $e_j$, and all half-pieces glued to the same pole $q_l$ form a topological disk $d_l$. 
    The zeros of $F$, the segments $\{ e_j \}_{j=1}^{b}$, and the disks $\{d_l\}_{l=1}^{m}$ form a cell decomposition of $\Sigma$.

    Since $\Uone$ is abelian, $\mathrm{Hom}(\pi_1(\Sigma-P), \Uone) = \mathrm{Hom}(H_1(\Sigma-P), \Uone)$. When all $\theta_i\in\ZZ$, the closed loops around every cone point have trivial monodromy. So we can further replace $H_1(\Sigma-P)$ with $H_1(\Sigma)$. Then the oriented segments $\{e_j\}$ form a basis of the chain group $C_1(\Sigma)$. The effect on the monodromy of moving along $e_j$ is a $2\pi w_j$-rotation.
    Let $a_1, \cdots , a_{2g} $ be a canonical basis of $H_1(\Sigma)$. Then as closed 1-chains, there exists a $2g\times b$ integer matrix $X$ such that
    \[\begin{pmatrix} a_1  \\ \vdots \\ a_{2g}  \end{pmatrix} =
    X \begin{pmatrix} e_1  \\ \vdots  \\ e_b \end{pmatrix} \]
    with $\rank(X)=2g$. Suppose $\rho$ is the monodromy representation of $(\Sigma;F,G)$. For $\gamma\in H_1(\Sigma)$, if $\rho(\gamma)=z \mapsto e^{2\pi\ii \psi} \in \Uone$, define $(\ln\rho)(\gamma):=\psi\in[0,1)$. Then we have
    \[\ln\rho \begin{pmatrix} a_1  \\ \vdots \\ a_{2g}  \end{pmatrix} \equiv
    X \begin{pmatrix} w_1  \\ \vdots  \\ w_b \end{pmatrix} \quad (\text{mod } \ZZ). \]

    On the other hand, since $F$ is orientable, the connection matrix contains no entry 2, and we can improve it with signature. As in the proof of Proposition \ref{prop:rank}.(2), define $\mtxM_{ij}=+1$ if $B_j$ is glued to pole $q_i$ that lies on the left of $F$, and define $\mtxM_{ij}=-1$ if $q_i$ lies on the right of $F$. Then the total sum of each column in this $\mtxM$ is 0.
    The boundary chain of each disk $d_l$ consists of segments with same signature. So we can orient them properly so that
    \[\begin{pmatrix} \pt d_1  \\ \vdots \\ \pt d_m  \end{pmatrix} =
    \mtxM \begin{pmatrix} e_1  \\ \vdots  \\ e_b \end{pmatrix}, \]
    where $\pt:C_2(\Sigma) \to C_1(\Sigma)$ is the boundary operator.

    Since $a_1,\cdots,a_{2g}$ generates $H_1(\Sigma)$, they must be linearly independent at chain level. Then rows of $X$ are linearly independent. However, each row of $\mtxM$ gives a boundary chain. So each row of $X$ is linearly independent of all rows in $\mtxM$.

    \medskip
    \textbf{(Step 2).} Suppose $(\Sigma;F,G)$ has trivial monodromy and $\vec{W}\in\RR_{>0}^b$ is the width vector. Then there exists $\vec{C}_0\in\ZZ_+^m$ and $\vec{A}\in\ZZ^{2g}$ such that
    \begin{equation}\label{eq:width_constrain_mond}
        \vec{C}_0=\mtxM \vec{W},\quad \vec{A}=X \vec{W}.
    \end{equation}
    We show that 
    the dimension of solution space is strictly smaller than $(b-\rankM)$. 
    
    By Proposition \ref{prop:rank}, $\rankM = m-1$. Let $\boldsymbol{M}_1,\cdots,\boldsymbol{M}_{m-1}$ be $(m-1)$ linearly independent rows of $\mtxM$, and $\boldsymbol{X}_1,\cdots,\boldsymbol{X}_{2g}$ be the rows of $X$.
    If these rows are linearly dependent, there exists $\lambda_1,\cdots,\lambda_{m-1},\mu_1,\cdots,\mu_{2g}$ such that
    $$ \lambda_1 \boldsymbol{M}_1 + \cdots + \lambda_{m-1} \boldsymbol{M}_{m-1} = \mu_1 \boldsymbol{X}_1 + \cdots + \mu_{2g} \boldsymbol{X}_{2g}. $$
    and at least one $\lambda_i$ and $\mu_j$ is non-zero.
    Applying this equation on the basis $\left( e_1, \cdots, e_b \right)^T$, the left side becomes a linear combination of $\pt d_1,\cdots,\pt d_m$, which must be exact. And the right side becomes a linear combination of the homology basis $a_1,\cdots,a_{2g}$, which can not be exact. Then both sides are zero. Since $\boldsymbol{X}_j$'s are linearly independent, all $\mu_j$ is zero, a contradiction.
    Thus
    $$\mathrm{rank}\begin{pmatrix} \mtxM \\ X \end{pmatrix} \geq m-1+2g$$
    and the solution space of (\ref{eq:width_constrain_mond}) has dimension $\leq b-(m-1+2g) < b-(m-1)$.
    The choices for $\vec{C}_0\in\ZZ_+^m$ and $\vec{A}\in\ZZ^{2g}$ are finite because $0<w_i\leq\max\{\theta_1,\cdots,\theta_n, 2\pi\}$. So the solution space for width vector is a finite union of subsets with dimension strictly smaller than $b-(m-1)= b-\rankM$. 
    So generic foliated surfaces with trivial monodromy form a lower dimensional subset in $\MsphgnCFT$

    \medskip
    \textbf{(Step 3).} By Lemma \ref{prop:id_CoAx}.(2), $p_T$ is injective on the subset of generic foliated surfaces with non-trivial monodromy. We have shown that in $\MsphgnCFT$, generic foliated surfaces with trivial monodromy form a lower dimensional subset, and non-generic foliated surfaces already form a lower dimensional subset. Since $p_T$ does not increase the dimension, the image of generic surfaces with non-trivial monodromy must form the top dimensional subset in $\MsphgnCT$.
\end{proof}

Finally, we are able to state the dimension count for co-axial surfaces.

\begin{theorem}\label{thm:dim_count_CT}
    Let $\vec{\theta}=(\theta_1, \cdots, \theta_n) \in \RR^n_+$ be an angle vector with $\theta_i\neq1$ and $\vec{T}=( E,\emptyset, N )$ be a type partition with $ \#E=n_E, \#N=n_N $.
    Whenever $\MsphgnCT$ is non-empty, its real dimension is given by the following:
    \begin{itemize}
        \item If $g>0$, the dimension is $2n_E+2g-1$;
        \item If $g=0$ and $N$ non-empty, the dimension is $2n_E-1$;
        \item If $g=0$ and $N$ empty, then $\vec{\theta}\in\NN_{>1}^n$ and the dimension is $2n-3$.
    \end{itemize}
\end{theorem}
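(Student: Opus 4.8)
The plan is to mirror the strict dihedral argument: first I would compute the dimension of the foliated moduli space $\MsphgnCFT$ using the strip decomposition, and then transport the count to $\MsphgnCT$ along the projection $p_T$ of diagram (\ref{eq:moduli_spaces_C}), keeping track of the fibers through the co-axial identification results (Lemma \ref{prop:id_CoAx}) and Proposition \ref{prop:trivial_mono}.

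For the foliated count, by Proposition \ref{prop:generic} and its corollary it suffices to work with generic surfaces, since the non-generic ones form a subset of strictly smaller dimension. I would fix a generic $(\Sigma;F,G)\in\MsphgnCFT$ whose strip decomposition has $b$ bigons and $m$ extra poles in $Q-P$; recall that $O=\emptyset$ for co-axial surfaces, so every zero of $F$ has even index. The Euler characteristic computation carried out in the proof of Theorem \ref{thm:dim_count_DT} (with $n_O=0$) gives $b=m+n+2g-2$. The continuous parameters are then the absolute latitudes of the $n_E$ even-index zeros, which are unconstrained, together with the width vector $\vec{W}\in\RR_{>0}^{b}$ subject to $\vec{C}=\mtxM\vec{W}$, where $\mtxM$ is the $(n_N+m)\times b$ connection matrix. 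Since the foliations of a co-axial surface are orientable, Proposition \ref{prop:rank} gives $\rankM=(n_N+m)-1$, so the admissible widths sweep out a space of dimension $b-(n_N+m-1)=n-n_N+2g-1=n_E+2g-1$, and adding the latitudes I expect $\dim\MsphgnCFT=2n_E+2g-1$.

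Next I would push this down to $\MsphgnCT$. When $N\neq\emptyset$, a developing map is already pinned down on an open set, so $p_T$ is a bijection and the dimension is preserved: $\dim\MsphgnCT=2n_E+2g-1$, which is the first bullet for $g>0$ and equals $2n_E-1$ when $g=0$, the second bullet. When $N=\emptyset$ one has $E=\{1,\dots,n\}$, hence $\vec{\theta}\in\NN_{>1}^{n}$ and $n_E=n$, so the foliated count reads $2n+2g-1$. If moreover $g>0$, Proposition \ref{prop:trivial_mono}(2) shows the trivial-monodromy locus is lower-dimensional, and on its complement Lemma \ref{prop:id_CoAx}(2) makes $p_T$ injective, so the top stratum maps dimension-preservingly and again $\dim\MsphgnCT=2n+2g-1=2n_E+2g-1$. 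If instead $g=0$, Proposition \ref{prop:trivial_mono}(1) forces every surface in $\MsphC{0}{n}{\vtheta}$ to have trivial monodromy, so by Lemma \ref{prop:id_CoAx}(1) every fiber of $p_T$ is $2$-dimensional; subtracting $2$ from the foliated count $2n-1$ yields $2n-3$, the third bullet.

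The main obstacle I anticipate is the last case, $g=0$ with $N=\emptyset$: I must be certain that the $2$-dimensional fiber of $p_T$ is uniform over a dense open part of the (non-empty) foliated moduli space, so that the dimension drops by exactly $2$ and not irregularly. This is precisely what Lemma \ref{prop:id_CoAx}(1) supplies once its hypotheses — trivial monodromy, together with the open condition that the preimages of the poles avoid the cone points — are checked to hold generically along the top stratum; combined with the corollary to Proposition \ref{prop:generic}, which removes the non-generic locus from consideration, this would close the argument. A secondary point to be careful about is that $\MsphgnCFT$ is non-empty exactly when $\MsphgnCT$ is, so the non-emptiness hypothesis of the theorem transfers cleanly between the foliated and unfoliated counts.
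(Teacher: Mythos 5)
Your proposal is correct and mirrors the paper's own argument step for step: the foliated count via generic surfaces, the Euler characteristic identity $b=n+m+2g-2$, the rank $n_N+m-1$ of the connection matrix from orientability (Proposition \ref{prop:rank}), and the pushdown along $p_T$ handled by the triviality of fibers when $N\neq\emptyset$, by Proposition \ref{prop:trivial_mono}(2) plus Lemma \ref{prop:id_CoAx}(2) when $g>0$ and $N=\emptyset$, and by Proposition \ref{prop:trivial_mono}(1) plus Lemma \ref{prop:id_CoAx}(1) when $g=0$ and $N=\emptyset$. Your worry about uniformity of the two-dimensional fiber in the last case is in fact already settled by the cited lemmas: Proposition \ref{prop:trivial_mono}(1) gives trivial monodromy for \emph{every} genus-zero surface with $N=\emptyset$, so Lemma \ref{prop:id_CoAx}(1) applies across the whole stratum, not merely generically.
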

\begin{proof}
    (1). If $g>0$, according to Proposition \ref{prop:trivial_mono}, the set of generic foliated surfaces with non-trivial monodromy has the top dimension in $\MsphgnCFT$, and it projects injectively onto a top dimensional subset in $\MsphgnCT$.
    Therefore, both moduli spaces have the same dimension, which is $2n_E+2g-1$ from Lemma \ref{lem:dim_count_CFT}. 

    (2). If $g=0$ while $N\neq\emptyset$, the projection $p_T$ is automatically injective. As a result, both moduli spaces once again have the same dimension. 

    (3). If $g=0$ and $N=\emptyset$, Proposition \ref{prop:trivial_mono}.(1) and Lemma \ref{prop:id_CoAx}.(1) imply that $\MsphCT{0}{n}{\vec{\theta}}{\vec{T}}$ is 2 dimensional lower than $\MsphCFT{0}{n}{\vec{\theta}}{\vec{T}}$. Therefore, the real dimension of the former is
    \[ (2n_E + 2g -1) -2 = 2n-3.\]
\end{proof}

We can also consider the maximal dimension of $\MsphgnC$. This is simpler than the strict dihedral case.
\begin{theorem}\label{thm:dim_count_C}
    Let $\vec{\theta}:=(\theta_1, \cdots, \theta_n) \in \RR^n_+$ be an angle vector with $\theta_i\neq1$. Suppose there are $M_E$ integers and $M_N$ non-integers, with $M_E + M_N = n$. Then whenever the moduli space is non-empty, the dimension of $\MsphgnC$ is
    \[\left\{   \begin{array}{ll}
        2M_E+2g-1 & , \text{ if } (g, M_N)\neq(0,0) ; \\
        2n-3 & , \text{ if } (g, M_N)=(0,0) .\\
                \end{array}
    \right. \]
\end{theorem}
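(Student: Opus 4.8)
The plan is to read off $\dim\MsphgnC$ from Theorem \ref{thm:dim_count_CT} by maximizing over type partitions. First I would record that every co-axial surface $\Sigma$, equipped with a developing map whose monodromy lies in $\Uone$, carries an \emph{orientable} pair $(F,G)$: rotations in $\Uone$ preserve every latitude circle and every meridian of $\STwo$ together with their orientations, so $D^*(\FoliLati)$ and $D^*(\FoliLogi)$ inherit global orientations. Hence the projection $p:\MsphgnCF\to\MsphgnC$ in (\ref{eq:moduli_spaces_C}) is surjective and $\MsphgnC=\bigcup_{\vec T}\MsphgnCT$, the union ranging over all type partitions $\vec T=\{E,N\}$ with $E$ a subset of the set of integer-angle indices (recall $O=\emptyset$ for co-axial surfaces). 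Since there are only finitely many such $\vec T$, this gives $\dim\MsphgnC=\max\{\dim\MsphgnCT : \MsphgnCT\neq\emptyset\}$, and it remains to maximize the formula of Theorem \ref{thm:dim_count_CT} and to exhibit one non-empty stratum attaining the maximum.

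Write $N_E$ and $N_N$ for the numbers of integer and non-integer entries of $\vec\theta$, so the only free parameter is $n_E\in\{0,1,\dots,N_E\}$. Suppose $(g,N_N)\neq(0,0)$. If $N_N>0$, then $N$ contains every non-integer-angle index and so is non-empty for all $\vec T$; if $N_N=0$, then $g>0$. In either situation the third clause of Theorem \ref{thm:dim_count_CT} never applies, so $\dim\MsphgnCT=2n_E+2g-1$, which is maximized by $n_E=N_E$ and equals $2N_E+2g-1$. Now suppose $(g,N_N)=(0,0)$, so $g=0$, $\vec\theta\in\NN_{>1}^{n}$ and $N_E=n$: the partition with $N=\emptyset$ contributes dimension $2n-3$, while any partition with $N\neq\emptyset$ has $n_E\leqslant n-1$ and dimension $2n_E-1\leqslant 2n-3$, so the maximum is again $2n-3$. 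This yields the asserted value as an upper bound for $\dim\MsphgnC$.

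For the matching lower bound I would show that the maximizing type partition $\vec{T}_0$ (namely $n_E=N_E$ when $(g,N_N)\neq(0,0)$, and $N=\emptyset$ when $(g,N_N)=(0,0)$) has non-empty moduli space whenever $\MsphgnC$ is non-empty, exactly as in the proof of Theorem \ref{thm:dim_count_D}: the constructions of Gendron-Tahar in \cite{GT23} produce, from any non-empty co-axial moduli space, a surface realizing the maximal integral sum, hence lying in $\MsphCT{g}{n}{\vec\theta}{\vec{T}_0}$, which is therefore non-empty and of the dimension computed above. The main obstacle is precisely this non-emptiness input: the maximization itself is routine bookkeeping with Theorem \ref{thm:dim_count_CT}, but confirming that the top-dimensional stratum is genuinely attained rather than merely bounded requires the explicit surgery arguments of \cite{GT23}, together with a check that they stay within the co-axial rather than merely the strict dihedral class.
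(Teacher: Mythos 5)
Your maximization over type partitions correctly reproduces the bookkeeping that underlies the theorem, and your observation that $p$ is surjective so $\MsphgnC=\bigcup_{\vec T}\MsphgnCT$ is the right reduction to Theorem \ref{thm:dim_count_CT}. Where you diverge from the paper is in the non-emptiness of the top stratum, and you have in fact put your finger on exactly the point at which your proposed route is incomplete. You suggest importing the Gendron--Tahar realization results as in Theorem \ref{thm:dim_count_D}, but you then (rightly) flag that one must verify those surgeries land in the co-axial, not merely the dihedral, class -- a verification you do not carry out. The paper avoids \cite{GT23} here entirely and instead argues internally: given any co-axial $\Sigma\in\MsphgnC$ with a type partition $\vec T=\{E,N\}$ in which $N$ contains an integer-angle index, apply the split deformation of Definition \ref{def:split} to move that integer-angle pole into $E$. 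This is a local cut-and-paste along leaves of $G$ which preserves $G$ as a topological foliation and hence its orientability; since orientability of $(F,G)$ is equivalent to co-axiality (Proposition \ref{prop:rank}(2) and the remark following it), the split surface is again co-axial, with a strictly larger $E$. Iterating finitely many times produces a surface in $\MsphCT{g}{n}{\vec\theta}{\vec{T}_0}$ for the maximal partition $\vec{T}_0$ (the one with $E_0$ equal to all integer-angle indices), establishing non-emptiness directly from the hypothesis that $\MsphgnC\neq\emptyset$. This internal argument is what fills the gap your proposal leaves open; it is a genuinely simpler route than invoking \cite{GT23}, and it also exhibits the lower-dimensional strata as lying on the boundary of the top one. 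Your upper-bound computation is correct and matches the paper's; replace the appeal to \cite{GT23} by this splitting argument and the proof is complete.
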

\begin{proof}

    Let $\vec{T}_0=(E_0,\emptyset,N_0)$ be the type partition with $i\in E_0$ for all $\theta_i\in\ZZ$. For any type partition $\vec{T}=(E,\emptyset,N)$ other than $\vec{T}_0$, $N$ contains at least one index $i$ with $\theta_i\in\ZZ$. By Proposition \ref{prop:low_dim_bd}, $\MsphCF{g}{n}{\vec{\theta};\vec{T}}$ appears at the lower-dimensional boundary of $\MsphCF{g}{n}{\vec{\theta};\vec{T}_0}$.
    When $(g,N_N)\neq(0,0)$, this result projects to $\MsphC{g}{n}{\vec{\theta};\vec{T}_0}$ since $p_T$ do not increase dimension and is injective on some top-dimensional subset.
    When $(g,N_N)=(0,0)$, every fiber of $p_T$ is 2-dimensional. So the difference of dimension between subsets are kept by this projection.
    Thus, the maximal dimension of $\MsphgnC$ is always achieved by $\MsphC{g}{n}{\vec{\theta};\vec{T}_0}$.
\end{proof}

\textbf{Remark.} Li \cite{Li19} has proved that the real dimension of moduli space of Strebel 1-form is $2g+l-1$, where $l=n_E$ is the number of zeros. The extra $n_E$ dimensions just arise from the sliding deformation of the zeros along longitude foliation.

\subsection{Examples for distinct type partitions}
The final part contains two examples of moduli space with the same prescribed cone angle but different type partition.

\begin{example}\label{eg:intersect_comp}
    Let $g=1, n=3, \vec{\theta}=(4,\frac12,\frac32)$. Consider two type partitions
    $$\vec{T}_A:=\left(\{4\};\{\frac12,\frac32\};\emptyset \right) \quad \text{ and }\quad 
    \vec{T}_B:=\left(\{4\};\emptyset;\{\frac12,\frac32\}\right). $$
    Then $\MsphD{1}{3}{\vec{\theta};\vec{T}_A}$ and $\MsphD{1}{3}{\vec{\theta};\vec{T}_B}$ has a one-dimensional intersection. 
\end{example}
\begin{proof}
By Theorem \ref{thm:dim_count_DT},
$$\dim_\RR \MsphD{1}{3}{\vec{\theta};\vec{T}_A}=4,\quad
    \dim_\RR \MsphD{1}{3}{\vec{\theta};\vec{T}_B}=2. $$
They are not empty once we construct their intersection. If $\Sigma$ lies in their intersection, it admits at least two different choices of $(F,G)$. By Proposition \ref{prop:id_SDhd}, its monodromy group must be $K4$. Similar to the proof of Proposition \ref{prop:trivial_mono}.(2), we can show that the surfaces with $K4$-monodromy also form a lower dimensional subset. So the intersection of $\MsphD{1}{3}{\vec{\theta};\vec{T}_A}$ and $\MsphD{1}{3}{\vec{\theta};\vec{T}_B}$ is at most one dimensional.

On the other hand, we can precisely found a one-dimensional subset as Figure \ref{fig:example_intersection}. Here $p_1,p_2,p_3$ is the cone point of angle $8\pi,\pi,3\pi$ respectively. We are not considering the behavior of different components, but at least these two subspace already have a one-dimensional intersection, while their own dimension are different.

\begin{figure}[ht]
  \makebox[\textwidth][c]{\centering
  \includegraphics[width=\textwidth]{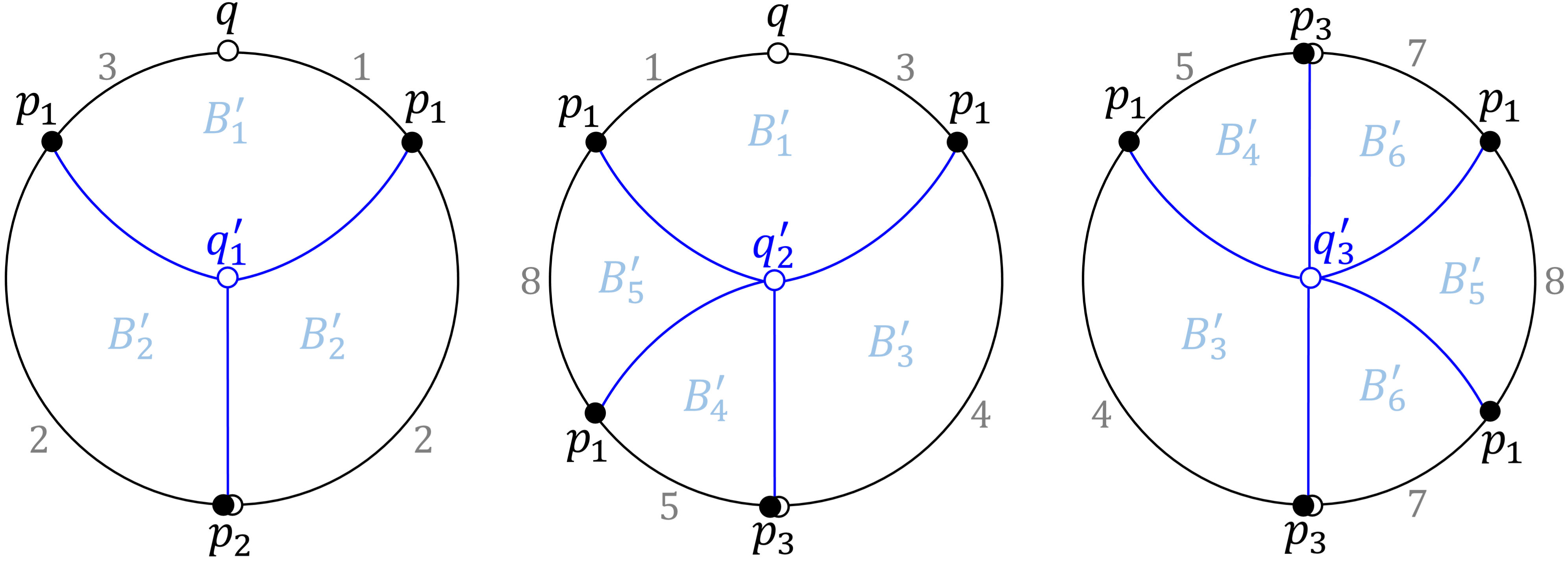}}
  \caption{A target surface is composed of 3 hemispheres. The boundary segments labeled with the same number are glued in pair. The one-parameter is the absolute latitude of $p_1$. The corresponding foliation contains three poles, including $p_2,p_3$ and an extra smooth point $q$. Its type partition is $\vec{T}_B$. 
  Blue segments result in another strip decomposition consisting of 6 bigons. All cone points are the zeros of the corresponding foliation. All poles of the new foliation are the center of the three hemispheres. Its type partition is $\vec{T}_A$.
  }
  \label{fig:example_intersection}
\end{figure}
\end{proof}

\begin{example}\label{eg:disjoint_comp}
    Now let $g=1, n=4, \vec{\theta}'=(4,\frac12,\frac32,c)$, where $c$ is an irrational number with $\frac12<c<2$. Consider type partitions $$\vec{T}'_A:=(\{4\};\{\frac12,\frac32\};\{c\}) \quad \text{ and }\quad \vec{T}'_B:=(\{4\};\emptyset;\{\frac12,\frac32,c\}). $$
    In this case $\MsphD{1}{4}{\vec{\theta}';\vec{T}'_A}$ and $\MsphD{1}{4}{\vec{\theta}';\vec{T}'_B}$ are disjoint in $\MsphD{1}{4}{\vec{\theta}'}$.
\end{example}
\begin{proof}
Note that $\vec{T}'_A$ is the maximal type partition, so $\MsphD{1}{4}{\vec{\theta}';\vec{T}'_A}$ is non-empty. A family of surfaces in $\MsphD{1}{4}{\vec{\theta}';\vec{T}'_B}$ are constructed in Figure \ref{fig:example_no_intersection}. Since $c$ is irrational, the choice of $(F,G)$ is unique for every surface in the mentioned moduli space. So foliated surfaces of different type partition must project to different strict dihedral surfaces.

\begin{figure}[ht]
  \makebox[\textwidth][c]{\centering
  \includegraphics[width=\textwidth]{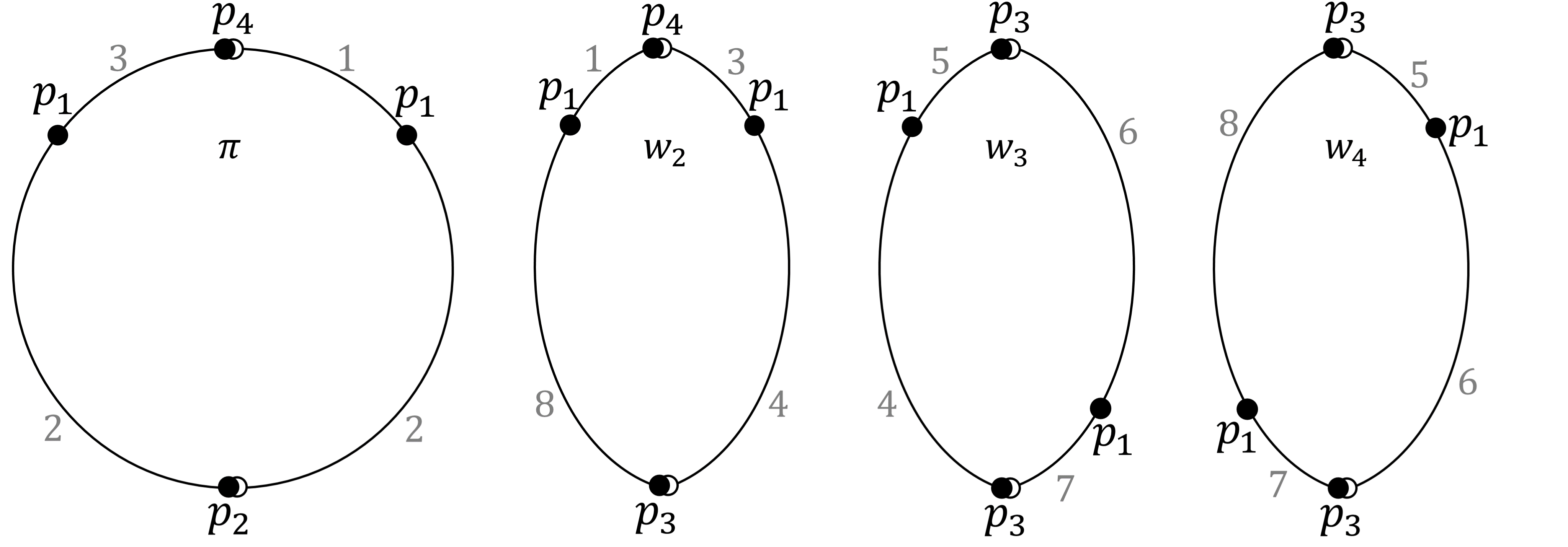}}
  \caption{The strip decomposition of a family of surfaces in $\MsphD{1}{4}{\vec{\theta}';\vec{T}'_B}$, consisting of 4 bigons of width $\pi,w_2,w_3,w_4$, where $w_2+1/2=c$ and $w_2+2(w_3+w_4)=\frac32$. The cone points $p_2, p_3, p_4$ are the poles of the corresponding foliation, with angle $\pi, 3\pi, 2\pi c$. } 
  \label{fig:example_no_intersection}
\end{figure}
\end{proof}

\bigskip
These examples imply the connected component worth serious study. This also provides a reason for studying several strata of meromorphic \qdf s simultaneously, and their relative position when projected to the moduli space.

\bigskip
We close with a summary to illustrate the flow chart at the very end. A dihedral cone spherical structure is induced by some special meromorphic abelian or quadratic differentials on the underlying Riemann surface. This results in a pair of latitude and longitude foliations. However, the relationships between these objects are complex. 

One differential can correspond to a family of dihedral surfaces, as spherical structures are finer than conformal structures. Yet multiple pairs of measured foliations can represent the same dihedral surface. Thus, the moduli space of dihedral cone spherical surfaces lies between the space of those special differentials and the space of measured foliation pairs. This fact exhibits both the excitement and challenges associated with cone spherical surfaces, even within this specific class.


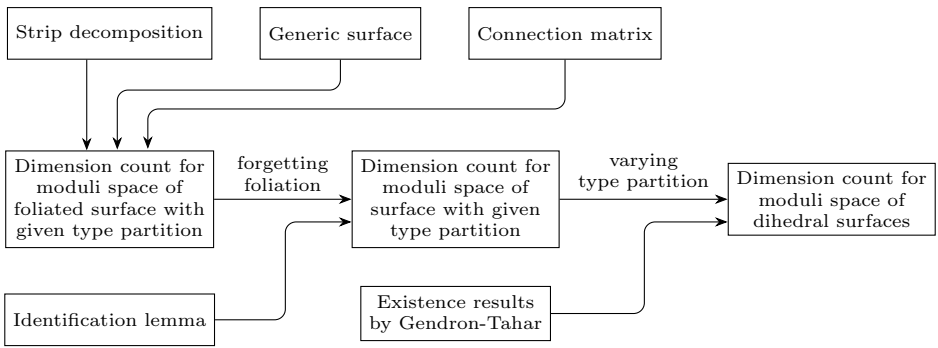
\begin{figure}[!h]
\makebox[\textwidth][c]{\centering
\resizebox{1.05\textwidth}{!}{
  \begin{tikzpicture}[node distance=2cm, >=Stealth]
    \tikzset{
      block/.style = {rectangle, draw, align=center, minimum height=2.1em, , font=\footnotesize},
    }
    \node [block] (strip) {Strip decomposition};
    \node [block, right=0.618cm of strip] (generic) {Generic surface};
    \node [block, right=0.618cm of generic] (matrix) {Connection matrix};
    \node [block, below=1.2cm of strip] (MsphFT) {Dimension count for\\ moduli space of \\foliated surface with \\given type partition};
    \node [block, below=0.6cm of MsphFT] (idlemma) {Identification lemma};
    \node [block, right=1.8cm of MsphFT] (MsphT) {Dimension count for\\ moduli space of \\surface with given \\type partition};
    \node [block, below=0.5cm of MsphT] (GenTah) {Existence results\\by Gendron-Tahar};
    \node [block, right=2.2cm of MsphT] (Msph) {Dimension count for\\moduli space of\\dihedral surfaces};
    \draw [->] ($(strip.south)+ (-0.3cm,0)$) -- ($(MsphFT.north)+ (-0.3cm,0)$);
    \draw [->][rounded corners] (generic) |- (1,-0.75) -| ($(MsphFT.north) + (0.1cm,0.01cm)$);
    \draw [->][rounded corners] (matrix) |- (3,-1) -| ($(MsphFT.north) + (0.5cm,0.02cm)$);
    \draw [->] (MsphFT) --node[align=center, above, font=\footnotesize]{forgetting\\foliation} (MsphT);
    \draw [->][rounded corners] (idlemma.east) -| (2.3,-3) |- ($(MsphT.west) + (0cm,-0.3cm)$);
    \draw [->] (MsphT) --node[align=center, above, font=\footnotesize]{varying\\type partition} (Msph);
    \draw [->][rounded corners] (GenTah.east) -| (7,-3) |- ($(Msph.west) + (0cm,-0.3cm)$);
  \end{tikzpicture}
}}
  \caption{The overall outline.} 
  \label{Summary}
\end{figure}




\phantomsection
\addcontentsline{toc}{section}{References}

\bibliography{sn-bibliography}




\bigskip

\fontsize{7}{9}\selectfont{
\Letter\ Sicheng Lu 

\textsc{ The Institute of Geometry and Physics, University of Science and Technology of China, Hefei 230026 China
}

\ \textit{Email address}: \texttt{sichenglu@ustc.edu.cn}

\bigskip
\ Bin Xu

\textsc{
School of Mathematical Sciences and CAS Wu Wen-Tsun Key Laboratory of Mathematics, University of Science and Technology of China, Hefei 230026 China
}

\ \textit{Email address}: \texttt{bxu@ustc.edu.cn}
}

\end{document}